\DeclareFontFamily{OT1}{pzc}{}
\DeclareFontShape{OT1}{pzc}{m}{it}{<-> [1.15] rpzcmi}{}
\DeclareMathAlphabet{\mathzc}{OT1}{pzc}{m}{it}
\def\End{\operatorname{End}\kern-.5pt}
\def\KK{{\mathzc K\kern0pt}}
\def\vv{{\mathzc v\kern.5pt}}
\def\aq{/\kern-2pt/}
\def\diag{\operatorname{diag}}
\def\La{{\Lambda}}
\def\fT{{\mathfrak T}}
\def\fS{{\mathfrak S}}
\def\End{{\text{\rm End}}}
\def\row{{\text{\rm row}}}
\def\bsi{{\boldsymbol i}}
\def\bsj{{\boldsymbol j}}
\def\bsq{{\boldsymbol q}}
\def\bse{{\boldsymbol e}}
\def\up{{\boldsymbol{\upsilon}}}
\def\sH{{\mathcal H}}
\def\sS{{\mathcal S}}
\def\sZ{{\mathcal Z}}
\def\sD{{\mathcal D}}
\def\sI{{\mathcal I}}
\def\la{{\lambda}}
\def\al{{\alpha}}
\def\vep{{\varepsilon}}
\def\xib{{\xi}}
\def\sfK{{\mathsf K}}
\def\sfE{{\mathsf E}}
\def\sfF{{\mathsf F}}
\def\bsfA{{\boldsymbol{\mathfrak A}}}
\def\fA{{\mathfrak A}}
\def\bfU{{\mathbf U}}
\def\bsS{{\boldsymbol{\sS}}}
\newtheorem{theorem}{Theorem}[section]
\newtheorem{lemma}[theorem]{Lemma}
\newtheorem{proposition}[theorem]{Proposition}
\newtheorem{corollary}[theorem]{Corollary}
\theoremstyle{definition}
\newtheorem{definition}[theorem]{Definition}
\newtheorem{example}[theorem]{Example}
\newtheorem{remark}[theorem]{Remark}
\numberwithin{equation}{theorem}
\def\co{{\rm co}}
\def\ro{{\rm ro}}
\def\vep{{\varepsilon}}
\def\hat{\widehat}
\def\mbq{{\mathbb Q}}
\def\mbz{{\mathbb Z}}
\def\bfl{{\mathbf 0}}
\def\fS{{\mathfrak S}}
\def\wh{\widehat}
\def\vph{{\varphi}}
\def\glmn{{\mathfrak{gl}_{m|n}}}
\def\row{{\text{\rm row}}}
\def\sX{{\mathcal X}}
\def\fm{{\mathfrak m}}
\def\cC{{\mathsf C}}
\def\sfc{{\mathsf c}}
\def\rc{{\rm rc}}
\def\sY{{\mathcal Y}}
\def\gd{{\text{gd}}}
\def\sfT{{\mathsf T}}
\def\sfS{{\mathsf S}}
\def\sfQ{{\mathsf Q}}
\def\sfC{{\mathsf C}}
\def\sfp{{\mathsf p}}
\def\sfq{{\mathsf q}}
\def\bfT{{\mathbf T}}
\def\fh{{\mathfrak h}}
\def\bsfh{{\boldsymbol{\fh}}}
\def\sh{{\text{\rm sh}}}
\begin{document}

\baselineskip16pt
\def\hei{\relax}

 \title[Canonical bases for  the quantum supergroups $\bfU(\mathfrak{gl}_{m|n})$]{Canonical bases for  the quantum\\ supergroups $\bfU(\mathfrak{gl}_{m|n})$}
\author{Jie Du and Haixia Gu}
\address{J.D., School of Mathematics and Statistics,
University of New South Wales, Sydney NSW 2052, Australia}
\email{j.du@unsw.edu.au}
\address{H.G., Department of Mathematics, East China Normal University, Shanghai 200062, and Huzhou University, Huzhou, Zhejiang 313000, China}
\email{alla0824@126.com}
\thanks{The authors gratefully acknowledge support from ARC under grant DP120101436 and ZJNSF (No. LZ14A010001). The work was completed while the second author was visiting UNSW}

\subjclass[2010]{Primary: 17B37, 17A70, 20G43; Secondary: 20G42, 20C08}

\begin{abstract} We give a combinatorial construction for the canonical bases of the $\pm$-parts of the quantum enveloping superalgebra $\bfU(\mathfrak{gl}_{m|n})$ and discuss their relationship with the Kazhdan-Lusztig bases for the quantum Schur superalgebras $\bsS(m|n,r)$ introduced in \cite{DR}. We will also extend this relationship to the induced bases for simple polynomial representations of $\bfU(\mathfrak{gl}_{m|n})$.
\end{abstract}

 \maketitle
\section{Introduction}
The theory of Kazhdan--Lusztig bases for Iwahori-Hecke algebras and its subsequent generalisation by Lusztig to canonical bases for quantum groups and their integrable modules was an important breakthrough in representation theory. Remarkably, this theory can also be approached through Kashiwara's crystal and global crystal bases and thus results in more applications. For example, it serves as an important motivation for the categorification of quantum enveloping algebras since its geometric construction provides a first model of categorification.

Naturally, generalising the canonical basis (or crystal) theory to the quantum supergroups attracts lots of attention and becomes rather challenging. For example, Benkart--Kang--Kashiwara \cite{BKK} developed a crystal basis theory for a certain class of representations of the quantum general linear Lie superalgebras; while Clark--Hill--Wang \cite{CHW} constructed crystal/canonical bases for quantum supergroups with no isotropic odd roots which includes $\mathfrak{sop}(1|2n)$ as the only finite type example. More recently, building on the work of Leclerc \cite{Lec} on quantum shuffles algebras, they \cite{CHW2} established the existence of the canonical basis (called the pseudo-canonical basis loc. cit.) of a quantum supergroup of special type, including the quantum supergroups $\bfU(\mathfrak{gl}_{m|n})$.

In this paper, we will provide a new construction of the canonical basis for the most fundamental quantum supergroup $\bfU(\mathfrak{gl}_{m|n})$, different from the one given in \cite{CHW2}. This approach was motivated by the following. First, canonical bases have been constructed in \cite{DR} for quantum Schur superalgebras, which are quotients of $\bfU(\mathfrak{gl}_{m|n})$. Second, the quantum supergroup $\bfU(\mathfrak{gl}_{m|n})$ can be realised as a ``limit algebra'' of quantum Schur superalgebras \cite{DG}, which generalises the construction of quantum $\mathfrak{gl}_{n}$ by Beilinson, Lusztig and MacPherson \cite{BLM}.
Thus, it is natural to expect the existence of the canonical basis for (the $\pm$-parts of) $\bfU(\mathfrak{gl}_{m|n})$ as a ``limit basis''
of the canonical bases for quantum Schur superalgebras.


The main discovery in the paper is the identification of the realisation bases of the $\pm$-parts with PBW type bases. It was observed by Du--Parshall \cite{DP} in the quantum $\mathfrak{gl}_{n}$ case that the BLM realisation bases for the $\pm$-parts share the same multiplication formula of a basis element by generators as the Ringel--Hall algebra of a linear quiver. In the super case, the nonexistence of Ringel--Hall algebras made us to seek a similar relation directly. Thus, under the realisation isomorphism, we prove in Theorem \ref{PBW basis} that the realisation basis for the $+$-part coincides with the PBW type bases considered in \cite{Z}. Now the realisation basis has a triangular relation to a certain monomial basis as discovered in the proof of Theorem \cite[Th.~8.1]{DG} via a similar relation in the quantum Schur superalgebras \cite[Th.~7.1]{DG}. Thus, we obtain a triangular relation between a monomial basis and a PBW basis. This relation is the key to the existence of the canonical bases (Theorem \ref{canonical basis}) and makes it computable, following an algorithm used in \cite{CX}. We will also see in Theorem \ref{canonical basis of two parts} how this canonical basis, as a ``limit basis'', is connected to the canonical bases of quantum Schur superalgebras


The canonical basis for the negative part in the nonsuper case induces nicely canonical bases for simple representations of $\bfU(\mathfrak{gl}_{n})$. However, in the super case, this nice property is no longer true in general. Clark, Hill and Wang conjectured in \cite[Conj.~8.9]{CHW2} that the property should hold for $\bfU(\mathfrak{gl}_{m|1})$ and their polynomial representations. We will prove this part of their conjecture in Corollary~\ref{CHWConj}. In general,
we will show that, for a simple polynomial representations of $\bfU(\mathfrak{gl}_{m|n})$, any basis induced from the canonical basis of a quantum Schur superalgebra $\bsS(m|n,r)$ coincides with the one induced by the canonical basis of the negative part of $\bfU(\mathfrak{gl}_{m|n})$.

It would be interesting to identify the canonical bases introduced here with the pseudo-canonical bases introduced in \cite[(7.3)]{CHW2} and to make a comparison with the canonical basis for the quantum coordinate superalgebra given in \cite{ZZ, ZHC}.

We organise the paper as follows. We will collect the basic theory of quantum Schur superalgebras in \S2, including a construction of the canonical basis. We provide in \S3 some multiplication formulas of high order in order to construct the Lusztig type form of the $\pm$-parts and prove that its defining basis is nothing but a PBW type basis in \S4. In \S5, we construct the canonical bases for the $\pm$-parts and describe a relation between this basis and that for quantum Schur superalgebras. As examples, we compute the canonical bases for the supergroups $\bfU(\mathfrak{gl}_{2|1})$ and $\bfU(\mathfrak{gl}_{2|2})$. Finally, in the last section, we discuss simple polynomial representations of $\bfU(\mathfrak{gl}_{m|n})$ and relate their bases induced by the canonical bases of $\bsS(m|n,r)$ and of the negative part of $\bfU(\mathfrak{gl}_{m|n})$. As an application, we prove the conjecture \cite[Conj.~8.9]{CHW2} for polynomial representations.

Throughout, let $m,n$ be nonnegative integers, not both zero. For any integers $i,t,s$ with $0\leq t\leq s$, define\vspace{-1ex}
\begin{equation}
\hat{i}=\left\{
\begin{aligned}
&0,& \text{  if } 1\leq i\leq m;\\
&1,&\text{ if }m+1\leq i\leq m+n,
\end{aligned}
\right.\;\;\text{ and }\;\; \left[\!\!\left[t\atop s\right]\!\!\right]=\frac{[\![s]\!]^!}{[\![t]\!]^![\![s-t]\!]^!}=\up^{s(t-s)}\left[t\atop s\right],\vspace{-1ex}
\end{equation}
where $[\![r]\!]^{!}:=[\![1]\!][\![2]\!]\cdots[\![r]\!]$ with $[\![i]\!]=1+\up^2+\cdots+\up^{2(i-1)}$ and $[i]=\left[i\atop 1\right]=\frac{\up^i-\up^{-i}}{\up-\up^{-1}}$.

Let $\up$ be an indeterminate and let $\up_a=\up^{(-1)^{\hat a}}$ for all $1\leq a\leq m+n$.

\section{Canonical bases for quantum Schur superalgebras}
 Let $\fS_r$ be the symmetric group on $r$ letters and let $S=\{(k,k+1)\mid 1\leq k<r\}$ be the set of basic transpositions. Form the Coxeter system $(\fS_r,S)$ and denote the length function with respect to $S$ by
$l:W\to\mathbb{N}$ and the Bruhat order on $\fS_r$ by $\leq$.

An $N$-tuple
$\lambda=(\lambda_1,\lambda_2,\cdots,\lambda_N)\in\mathbb N^N$ of non-negative integers is
called a composition of $r$ into $N$ parts if
$|\la|:=\sum_i\la_i=r$.  Let $\La(N,r)$ denote the set of compositions of $r$ into $N$-parts. A partition $\pi$ of $r$ is a weakly decreasing sequence $(\pi_1,\pi_2,\cdots,\pi_t)$ of nonzero integers. Let $\Pi(r)$ denote the set of partitions of $r$.

The parabolic (or standard Young)
subgroup $\fS_{\lambda}$ of $\fS_r$ associated with a composition $\la$
consists of the permutations of $\{1,2,\cdots,r\}$ which leave
invariant the following sets of integers
$$R_1^\la=\{1,2,\cdots,\lambda_1\},R_2^\la=\{\lambda_1+1,\lambda_1+2,\cdots,\lambda_1+\lambda_2\},\cdots.$$

We will also denote by $\sD_\la:=\mathcal{D}_{\fS_\la}$ (resp., $\sD_\la^+$) the set of all
distinguished or shortest (resp., longest) coset representatives of the right
cosets of $\fS_\la$ in $\fS_r$.
Let
$\mathcal{D}_{\la\mu}=\mathcal{D}_\la\cap\mathcal{D}^{-1}_{\mu}$, where $\mu\in\La(N,r)$.
Then $\mathcal{D}_{\la\mu}$ (resp., $\mathcal{D}^+_{\la\mu}$) is the set of shortest (resp., longest)
$\fS_\la$-$\fS_\mu$ double coset representatives. For
$d\in\mathcal{D}_{\la\mu}$, the subgroup $\fS_\la^d\cap
\fS_\mu=d^{-1}\fS_\la d\cap \fS_\mu$ is a parabolic subgroup associated
with a composition which is denoted by $\la d\cap\mu$. In other
words, we define
\begin{equation}\label{ladmu}
\fS_{\la d\cap\mu}=\fS_\la^d\cap \fS_\mu.
\end{equation}
The composition $\la d\cap\mu$ can be easily described in terms of the following matrix.
Let
\begin{equation}\label{jmath}
\jmath(\la,d,\mu)=(a_{i,j}),\qquad\text{where }a_{i,j}=|R^\la_i\cap d(R^\mu_j)|,
\end{equation}
be the $N\times N$ matrix associated to the
 double coset $\fS_\la d \fS_\mu$. Then
 \begin{equation}\label{ladmu}
 \la d\cap\mu=(\nu^1,\nu^2,\ldots,\nu^N),
  \end{equation}
where $\nu^j=(a_{1,j},a_{2,j},\ldots,a_{N,j})$ is the $j$th column of $A$.
In this way, the matrix set
$$M(N,r)=\{\jmath(\la,d,\mu)\mid\la,\mu\in\La(N,r),d\in\sD_{\la\mu}\}$$
is the set of all $N\times N$ matrices over $\mathbb N$ whose entries sum to $r$.
For $A\in M(N,r)$, let
 $$\ro(A):=(\sum_{j=1}^Na_{1,j},\ldots,\sum_{j=1}^Na_{N,j})=\la\,\text{ and }\,\co(A):=(\sum_{i=1}^Na_{i,1},\ldots,\sum_{i=1}^Na_{i,N})=\mu.$$

For nonnegative integers (not both zero) $m,n$, we often write a composition $\la=(\la_1,\ldots,\la_{m+n})\in\La(m+n,r)$ as $\la=(\lambda^{(0)}|\lambda^{(1)})$, where
 $$\lambda^{(0)}=(\lambda_1,\cdots,\lambda_m),\lambda^{(1)}=(\lambda_{m+1},\cdots,\lambda_{m+n}),$$
to indicate the``even''
and ``odd'' parts of $\la$ and identify $\Lambda(m+n,r)$ with the set
\begin{equation*}
\begin{aligned}
\Lambda(m|n,r)&=\{\lambda=(\lambda^{(0)}|\lambda^{(1)}) \mid\la\in\La(m+n,r)\}\\
&=\bigcup_{r_0+r_1=r}(\La(m,r_0)\times\La(n,r_1)).\\
\end{aligned}
\end{equation*}Let
\begin{equation*}
\begin{aligned}
\Lambda^+(m|n,r)&=\{\la\in\La(m|n,r)\mid \la_1\geq\cdots\geq\la_m,\la_{m+1}\geq\cdots\geq\la_{m+n}\},\\
\La(m|n)&=\bigcup_{r\geq0}\La(m|n,r)=\mathbb N^{m+n},\text{ and }\La^+(m|n)=\bigcup_{r\geq0}\La^+(m|n,r)\\
\end{aligned}
\end{equation*}
Thus, a parabolic subgroup $\fS_\la$ associated with  $\lambda=(\lambda^{(0)}|\lambda^{(1)})\in \Lambda(m,r_0)\times\Lambda(n,r_1)$  has the even part
$\fS_{(\la^{(0)}|1^{r_1})}$, briefly denoted by $\fS_{\lambda^{(0)}}$ , and the odd part $\fS_{(1^{r_0}\mid \la^{(1)})}$, denoted by $\fS_{\lambda^{(1)}}$.

 For
$\la,\mu\in\Lambda(m|n,r)$, let
\begin{equation}\label{Dcirc}
\mathcal{D}^\circ_{\la\mu}=\{d\in\mathcal{D}_{\la\mu}\mid
\fS^d_{\la^{(0)}}\cap \fS_{\mu^{(1)}}=1,\fS^d_{\la^{(1)}}\cap
\fS_{\mu^{(0)}}=1\}.
\end{equation}
This set is the super version of the usual $\sD_{\la\mu}$.
Let
\begin{equation}\label{M(m|n)}
\aligned
M(m|n,r)&=\{\jmath(\la,d,\mu)\mid\la,\mu\in\La(m|n,r),d\in\sD_{\la\mu}^\circ\}\text{ and }\\
M(m|n)&=\bigcup_{r\geq0}M(m|n,r).\endaligned
\end{equation}

Actually, from \cite[Prop.3.2]{DR}, if $(a_{i,j})\in M(m|n,r)$, then $a_{i,j}=0$ \mbox{ or } $1$ if $ i\leq m<j$ or $ j\leq m<i$.
We may extends the Bruhat order to $M(m|n,r)$ by setting, for $A=\jmath(\la,d,\mu),A'=\jmath(\la',d',\mu')\in M(m|n,r)$,
\begin{equation}\label{Bruhat}
A\leq A'\iff \la=\la', \mu=\mu', \text{ and }d\leq d'.
\end{equation}

Let $\mathcal{Z}=\mathbb Z[\up,\up^{-1}]$. The Hecke algebra $\mathcal{H}=\sH(\fS_r)$ associated to
$\fS=\fS_r$ is a free $\sZ$-module with basis $\{T_w; w\in
\fS_r\}$ and the multiplication is defined by the rules: for $s\in S$,
\begin{equation}
{T}_w{T}_s=\left\{\begin{aligned} &{T}_{ws},
&\mbox{if } l(ws)>l(w);\\
&(\up^2-1)T_w+\up^2{T}_{ws}, &\mbox{otherwise}.
\end{aligned}
\right.
\end{equation}
The bar involution on $\sH$ is the ring automorphism $\bar{\ }:\sH\to\sH$ defined by $\bar\up=\up^{-1}$ and $\bar T_w=(T_{w^{-1}})^{-1}$
for all $w\in\fS$.



For each $\lambda=(\lambda^{(0)}|\lambda^{(1)})\in \Lambda(m|n,r)$, define
$$x_{\lambda^{(0)}}=\sum_{w\in \fS_{\lambda^{(0)}}}T_w,y_{\lambda^{(1)}}=\sum_{w\in \fS_{\lambda^{(1)}}}(-\up^2)^{-l(w)}T_w.$$

\begin{definition}\label{S(m|n,r)} Let $\fT(m|n,r)=\bigoplus_{\lambda\in\Lambda(m|
n,r)}x_{\lambda^{(0)}}y_{\lambda^{(1)}}\mathcal{H}.$
The algebra
$$\sS(m|n,r):=
\End_{\mathcal{H}}(\fT(m|n,r))$$ is called a $\up$-{\it Schur superalgebra} over
$\sZ$ on which the $\mathbb Z_2$-graded structure is induced from the $\mathbb Z_2$-graded structure on $\fT(m|n,r)$ defined by
$$\fT(m|n,r)_i=\bigoplus_{{\lambda\in\Lambda(m|
n,r)}\atop {|\la^{(1)}|\equiv i(\text{mod}2)}}x_{\lambda^{(0)}}y_{\lambda^{(1)}}\mathcal{H}_{R} \;\;(i=\bar 0,\bar1).$$
\end{definition}

Following \cite{DR}, define, for $\lambda,\mu\in\Lambda(m|n,r)$ and
$d\in\mathcal{D}^\circ_{\lambda\mu}$,
$$T_{\fS_\lambda d \fS_\mu}:=\sum_{\substack{w_0w_1
\in \fS_\mu\cap \mathcal{D}_{\la d\cap\mu},\\
w_0\in \fS_{\mu^{(0)}},w_1\in \fS_{\mu^{(1)}}}}(-\up^2)^{-\ell(w_1)}x_{\lambda^{(0)}}y_{\lambda^{(1)}}T_dT_{w_0}T_{w_1}.$$
There exists $\mathcal{H}$-homomorphism
$$\phi_{\la\mu}^d(x_{\alpha^{(0)}}y_{\alpha^{(1)}})=\delta_{\mu,\alpha}T_{\fS_\lambda d
\fS_\mu}h, \forall \alpha\in\Lambda(m|
n,r),h\in\mathcal{H}.$$

If $A=\jmath(\lambda,d,\mu)$, denote $\phi_A:=\phi^d_{\lambda\mu}$.
The following result is given in {\cite[5.8]{DR}}.
\begin{lemma}\label{DR5.8} The set $\{\phi_A\mid
A\in M(m|n,r)\}$
forms a $\sZ$-basis for $\sS(m|n,r )$.
\end{lemma}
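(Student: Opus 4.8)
The plan is to reduce the statement to a basis theorem for certain ``bi-invariant'' subspaces of $\sH$, where the super-specific combinatorics of \cite{DR} does the real work. Since $\fT(m|n,r)=\bigoplus_{\mu\in\Lambda(m|n,r)}x_{\mu^{(0)}}y_{\mu^{(1)}}\sH$ as a right $\sH$-module, the functor $\Hom_\sH(-,-)$ gives
\begin{equation*}
\sS(m|n,r)=\End_\sH\!\big(\fT(m|n,r)\big)=\bigoplus_{\la,\mu\in\Lambda(m|n,r)}\Hom_\sH\!\big(x_{\mu^{(0)}}y_{\mu^{(1)}}\sH,\ x_{\la^{(0)}}y_{\la^{(1)}}\sH\big),
\end{equation*}
and for $d\in\sD^\circ_{\la\mu}$ the map $\phi^d_{\la\mu}$ visibly lies in the $(\la,\mu)$-summand. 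So it suffices to prove that, for each fixed pair $\la,\mu\in\Lambda(m|n,r)$, the family $\{\phi^d_{\la\mu}\mid d\in\sD^\circ_{\la\mu}\}$ is a $\sZ$-basis of $\Hom_\sH(x_{\mu^{(0)}}y_{\mu^{(1)}}\sH,\ x_{\la^{(0)}}y_{\la^{(1)}}\sH)$; the disjoint union over $(\la,\mu)$ of these index sets is then precisely $M(m|n,r)$ via $A=\jmath(\la,d,\mu)$.

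Next I would identify each Hom-space with a concrete subspace of $\sH$. The module $x_{\mu^{(0)}}y_{\mu^{(1)}}\sH$ is cyclic, generated by $x_{\mu^{(0)}}y_{\mu^{(1)}}$, so a right $\sH$-map $f$ out of it is determined by $h:=f(x_{\mu^{(0)}}y_{\mu^{(1)}})\in x_{\la^{(0)}}y_{\la^{(1)}}\sH$, and $f$ is well defined exactly when $h$ is killed on the right by the annihilator $\KK:=\{k\in\sH\mid x_{\mu^{(0)}}y_{\mu^{(1)}}k=0\}$. Using $x_{\mu^{(0)}}T_s=\up^2x_{\mu^{(0)}}$ for $s$ a Coxeter generator of $\fS_{\mu^{(0)}}$ and $y_{\mu^{(1)}}T_s=-y_{\mu^{(1)}}$ for $s$ a generator of $\fS_{\mu^{(1)}}$ (and that these two parabolics commute), one gets $T_s-\up^2\in\KK$ and $T_s+1\in\KK$ respectively, so $h\KK=0$ forces the eigenvalue relations $hT_s=\up^2h$ on $\fS_{\mu^{(0)}}$ and $hT_s=-h$ on $\fS_{\mu^{(1)}}$; conversely those relations imply $h\KK=0$. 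By the standard fact that such eigenvalue conditions cut out the right ideal $\sH x_{\mu^{(0)}}y_{\mu^{(1)}}$, this yields a $\sZ$-linear isomorphism
\begin{equation*}
\Hom_\sH\!\big(x_{\mu^{(0)}}y_{\mu^{(1)}}\sH,\ x_{\la^{(0)}}y_{\la^{(1)}}\sH\big)\ \xrightarrow{\ \sim\ }\ x_{\la^{(0)}}y_{\la^{(1)}}\sH\ \cap\ \sH\, x_{\mu^{(0)}}y_{\mu^{(1)}},\qquad f\longmapsto f(x_{\mu^{(0)}}y_{\mu^{(1)}}),
\end{equation*}
under which $\phi^d_{\la\mu}$ corresponds to $T_{\fS_\la d\fS_\mu}$. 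Thus the lemma becomes: $\{T_{\fS_\la d\fS_\mu}\mid d\in\sD^\circ_{\la\mu}\}$ is a $\sZ$-basis of the bi-invariant subspace $x_{\la^{(0)}}y_{\la^{(1)}}\sH\cap\sH x_{\mu^{(0)}}y_{\mu^{(1)}}$.

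For the last point, the double-coset decomposition $\sH=\bigoplus_{d\in\sD_{\la\mu}}\sH(\fS_\la)T_d\sH(\fS_\mu)$ together with the absorption $x_{\la^{(0)}}y_{\la^{(1)}}\sH(\fS_\la)=\sZ\,x_{\la^{(0)}}y_{\la^{(1)}}$ (and similarly on the right) shows that the bi-invariant subspace is spanned over $\sZ$ by the elements $C_d:=x_{\la^{(0)}}y_{\la^{(1)}}T_d\,x_{\mu^{(0)}}y_{\mu^{(1)}}$ with $d\in\sD_{\la\mu}$. The delicate and super-specific part — and the main obstacle — is the vanishing analysis: because symmetrizer factors $x$ and antisymmetrizer factors $y$ interact, $C_d=0$ as soon as one of the mixed intersections $\fS^d_{\la^{(0)}}\cap\fS_{\mu^{(1)}}$ or $\fS^d_{\la^{(1)}}\cap\fS_{\mu^{(0)}}$ contains a reflection, and one must show the converse, so that the nonzero $C_d$ are exactly those with $d\in\sD^\circ_{\la\mu}$; this is where the entry restriction $a_{i,j}\in\{0,1\}$ across the $m\mid n$ divide (\cite[Prop.~3.2]{DR}) is used. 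Finally, for $d\in\sD^\circ_{\la\mu}$ one computes that $C_d$ equals $T_{\fS_\la d\fS_\mu}$ up to an explicit monomial in $\up$, and expanding $T_{\fS_\la d\fS_\mu}$ in the $\{T_w\}$-basis shows its support is contained in $\fS_\la d\fS_\mu$ with unit-times-$(-\up^2)^{\ast}$ coefficients; distinct double cosets have disjoint supports, giving $\sZ$-linear independence, while spanning was already established. All of these computations are carried out in \cite[\S3,\S5]{DR}, so in the write-up I would invoke \cite{DR} for the vanishing and normalization combinatorics rather than reproduce them, the structural reduction above being the new content needed to phrase the statement in the form \eqref{M(m|n)}.
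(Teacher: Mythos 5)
Your outline is correct and is essentially the argument of the cited source: the paper itself offers no proof of this lemma, stating it verbatim from \cite[5.8]{DR}, and your reduction (splitting $\End_\sH(\fT(m|n,r))$ into Hom-spaces, identifying each with the bi-invariant subspace $x_{\la^{(0)}}y_{\la^{(1)}}\sH\cap\sH x_{\mu^{(0)}}y_{\mu^{(1)}}$, and then running the double-coset analysis) is exactly how that result is established there. You correctly isolate the two points that genuinely require the super combinatorics — that $C_d$ vanishes precisely for $d\notin\sD^\circ_{\la\mu}$, and that the right annihilator of $x_{\mu^{(0)}}y_{\mu^{(1)}}$ is generated by the elements $T_s-\up^2$ and $T_s+1$ so that the eigenvalue conditions cut out $\sH x_{\mu^{(0)}}y_{\mu^{(1)}}$ — and it is appropriate to cite \cite{DR} for both rather than reprove them.
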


In order to define the canonical basis, we use the normalised basis $\{\varphi_A\mid
A\in M(m|n,r)\}$ defined as follow.

For $\lambda,\mu\in \Lambda(m|n,r)$ and $d\in \mathcal{D}^\circ_{\lambda\mu}$, set $d^*$ (resp. $^*d$) to be the longest element in the double coset $\fS_{\lambda^{(0)}} d \fS_{\mu^{(0)}}$(resp. $\fS_{\lambda^{(1)}} d \fS_{\mu^{(1)}}$ ). If $A=\jmath(\lambda,d,\mu)$, by \cite[(6.0.2)]{DR}, let\footnote{We have corrected some typos given in \cite[(6.2.1)]{DR}.}
$$\mathcal{T}_A=\up^{-l(d^*)+l(^*d)-l(d)}T_A\;\;\text{ and }\;\;\varphi_A=\up^{-l(d^*)+l(^*d)-l(d)+l(w_{0,\mu^{(0)}})-l(w_{0,\mu^{(1)}})}\phi_A,$$
where $w_{0,\la}$ denotes the longest element in $\fS_\la$.

The {\it bar involution} on $\sH$ can be extended to the quantum Schur superalgebra
\begin{equation}\label{bar on S}
\bar{\ }:\sS(m|n,r)\longrightarrow \sS(m|n,r) \text{ satisfying }\bar\up=\up^{-1}, \overline{\vph_A}=\sum_{B\leq A}r_{B,A}\vph_B,
\end{equation}
where $r_{B,A}$ is defined by $\overline{\mathcal{T}_A}=\sum_{B\leq A}r_{B,A}\mathcal T_B$.

Let
\begin{equation}\label{hatA}
[A]=(-1)^{\wh A}\vph_A\quad\text{ where }\widehat A=\sum_{\substack{m<k<i\leq m+n\\1\leq j<l\leq m+n}}a_{i,j}a_{k,l}.
\end{equation}

Recall from \cite[\S8]{DR} that the $\up$-Schur superalgebra $\sS(m|n,r)$ can also be defined as the endomorphism algebra $\End_{\sH}(V(m|n)^{\otimes r})$ of the tensor space $V(m|n)^{\otimes r}$; see Corollary 8.4 there. Here $V(m|n)$ is a free $\sZ$-module of rank $m+n$ with basis $v_1,v_2,\cdots,v_{m+n}$, where $v_1,v_2,\cdots,v_m$ are even and $v_{m+1},\cdots,v_{m+n}$ are odd. Its tensor product $V(m|n)^{\otimes r}$ has the basis
$\{v_\bsi:=v_{i_1}\otimes v_{i_2}\otimes\cdots \otimes v_{i_r}\}_{\bsi\in I(m|n,r)}$ where
$$I(m|n,r)=\{\bsi=(i_1,i_2,\cdots,i_r)\mid 1\leq
i_j\leq m+n,\forall j\}.$$

The place
permutation (right) action of the symmetric group $\fS_r$ acts on $I(m|n, r)$ induces right $\sH$-module structure on $V(m|n)^{\otimes r}$; see \cite[(1.1.1)]{DG}.
For $A=\jmath(\lambda,d,\mu)\in M(m|n,r)$, let $\zeta_A\in\End_{\sH}(V(m|n)^{\otimes r})$ be defined by
$$\zeta_A(v_\mu)=(v_\mu) N_{\fS_r,\fS^d_\lambda\cap \fS_\mu}(e_{\mu,\lambda
d})=\sum_{w\in\mathcal{D}_{\la d\cap\mu}\cap\fS_\mu}(-\bsq)^{-l(w_1)}(v_{\bsi_\la d})T_w,$$
where $N_{\fS_r,\fS^d_\lambda\cap \fS_\mu}(e_{\mu,\lambda
d})$ is the relative norm defined in \cite[(1.1.2)]{DG},  $v_\mu=v_{\bsi_\mu}$ with
$$
\bsi_\mu=(\underbrace{1,\ldots,1}_{\mu_1},\underbrace{2,\ldots,2}_{\mu_2},\ldots,\underbrace{m+n,\ldots,m+n}_{\mu_{m+n}})=(1^{\la_1},2^{\la_2},\ldots,(m+n)^{\la_{m+n}})$$
and $w_1$ is an ``odd'' component of $w=w_0w_1$ with $w_i\in\fS_{\mu^{(i)}}$.
Following \cite[(4.2.1)]{DG}, let
\begin{equation}
\xib_A=\up^{-d(A)}\zeta_A\quad\text{where }d(A)=\sum_{i>k,j<l}a_{i,j}a_{k,l}+\sum_{j<l}(-1)^{\hat{i}}a_{i,j}a_{i,l}.
\end{equation}
 We have the following identification between the bases $\{[A]\}$ and $\{\xi_A\}$.

\begin{lemma}\label{DRB-DHB} By identifying $\sS(m|n,r)$ with $\End_{\sH}(V(m|n)^{\otimes r})$ under the isomorphism given in
\cite[Cor.~8.4]{DR}, we have $\xib_A=[A]=(-1)^{\wh A}\vph_A$ for all $A\in M(m|n,r)$.
\end{lemma}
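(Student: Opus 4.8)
The plan is to transport everything through the right $\sH$-module isomorphism $\Theta\colon\fT(m|n,r)\xrightarrow{\ \sim\ }V(m|n)^{\otimes r}$ underlying the algebra isomorphism of \cite[Cor.~8.4]{DR}. This $\Theta$ carries each summand $x_{\lambda^{(0)}}y_{\lambda^{(1)}}\sH$ of $\fT(m|n,r)$ onto the weight space of $V(m|n)^{\otimes r}$ of content $\lambda$, and sends the cyclic generator $x_{\lambda^{(0)}}y_{\lambda^{(1)}}$ to $c_\lambda\,v_{\bsi_\lambda}$ for an explicit scalar $c_\lambda$, which (by the definitions of $x_{\lambda^{(0)}},y_{\lambda^{(1)}}$ together with the eigenvalue identities $v_{\bsi_\lambda}T_s=\up^2 v_{\bsi_\lambda}$ for an even simple $s$ fixing $\bsi_\lambda$ and $v_{\bsi_\lambda}T_s=-v_{\bsi_\lambda}$ for such an odd $s$) is a power of $\up$ of the form $\up^{\pm l(w_{0,\lambda^{(0)}})\pm l(w_{0,\lambda^{(1)}})}$. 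Under $f\mapsto\Theta f\Theta^{-1}$, the Schur-algebra basis element $\phi_A$ becomes an endomorphism of $V(m|n)^{\otimes r}$ supported on the weight space of content $\mu=\co(A)$, hence determined by its value on $v_{\bsi_\mu}$.

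First I would evaluate $\Theta\phi_A\Theta^{-1}$ on $v_{\bsi_\mu}$ and compare with the defining formula for $\zeta_A$. Writing $T_{\fS_\lambda d\fS_\mu}=x_{\lambda^{(0)}}y_{\lambda^{(1)}}\eta_A$ with $\eta_A=\sum(-\up^2)^{-l(w_1)}T_dT_{w_0}T_{w_1}\in\sH$ (the sum over $w_0w_1\in\fS_\mu\cap\mathcal D_{\lambda d\cap\mu}$ as in the definition), the identity $\phi^d_{\lambda\mu}(x_{\mu^{(0)}}y_{\mu^{(1)}})=T_{\fS_\lambda d\fS_\mu}$ gives $\Theta\phi_A\Theta^{-1}(v_{\bsi_\mu})=c_\mu^{-1}c_\lambda\,v_{\bsi_\lambda}\eta_A$, while $\zeta_A(v_{\bsi_\mu})=(v_{\bsi_\lambda d})N_{\fS_r,\fS^d_\lambda\cap\fS_\mu}(e_{\mu,\lambda d})=\sum(-\bsq)^{-l(w_1)}(v_{\bsi_\lambda d})T_w$. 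Expanding $v_{\bsi_\lambda}\eta_A$ and rewriting $v_{\bsi_\lambda}T_d$ back into the standard tensor monomial $v_{\bsi_\lambda d}$ via the signed place-permutation action of \cite[(1.1.1)]{DG}, the two sums run over the same index set with matching $(-\bsq)^{-l(w_1)}$ coefficients, so $\Theta\phi_A\Theta^{-1}=\epsilon_A\,\up^{e_A}\,\zeta_A$ for a sign $\epsilon_A=\pm1$ and an integer $e_A$ that must then be pinned down.

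The subtlest point is identifying $\epsilon_A$. The only place a sign beyond the coefficients $(-\bsq)^{-l(w_1)}$ and the $\up$-powers can enter is the passage between the unsigned Hecke action on $\fT(m|n,r)$ and the Koszul-signed place-permutation action on $V(m|n)^{\otimes r}$, i.e.\ in straightening $v_{\bsi_\lambda}T_d$ into the standard-order monomial $v_{\bsi_\lambda d}$. I claim the resulting Koszul sign is $(-1)^{\widehat A}$ with $\widehat A=\sum_{m<k<i\le m+n,\;1\le j<l\le m+n}a_{i,j}a_{k,l}$: in the strand/matrix picture of the double coset $\fS_\lambda d\fS_\mu$, a strand terminating in odd row $k$ from column $l$ crosses each strand terminating in odd row $i>k$ from an earlier column $j<l$, there are $a_{k,l}a_{i,j}$ such crossings, and each is a crossing of two odd strands and hence contributes $-1$. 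Carrying out this count precisely, and checking it against the small cases $n\le 1$, is the first genuine piece of work.

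Finally I would match the $\up$-powers, i.e.\ show
\[
e_A \;=\; d(A)-l(d^*)+l({}^*d)-l(d)+l(w_{0,\mu^{(0)}})-l(w_{0,\mu^{(1)}})
\]
after folding in the scalars $c_\lambda,c_\mu$; this is exactly what is needed for $\up^{-d(A)}\zeta_A=(-1)^{\widehat A}\up^{-l(d^*)+l({}^*d)-l(d)+l(w_{0,\mu^{(0)}})-l(w_{0,\mu^{(1)}})}\phi_A$, that is, $\xi_A=[A]$. Every term on the right is an explicit quadratic form in the entries $a_{i,j}$ of $A$: $l(w_{0,\mu^{(0)}})$ and $l(w_{0,\mu^{(1)}})$ depend only on $\co(A)=\mu$; $l(d)$, $l(d^*)$, $l({}^*d)$ are given by the standard double-coset length formulas of \cite[\S6]{DR}, with $l(d^*)-l(d)$ depending only on the even block $(a_{i,j})_{i,j\le m}$ and $l({}^*d)-l(d)$ only on the odd block $(a_{i,j})_{i,j>m}$; and $d(A)$ is as in the text. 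Substituting these, together with $\sum_j a_{i,j}=\lambda_i$ and $\sum_i a_{i,j}=\mu_j$, reduces the displayed identity to an elementary polynomial identity in the $a_{i,j}$, which I would verify by matching the $\sum_{i>k,\,j<l}a_{i,j}a_{k,l}$ contributions, the within-block $\binom{a_{i,j}}{2}$ contributions, and the $(-1)^{\hat i}$-twisted cross terms $\sum_i\sum_{j<l}a_{i,j}a_{i,l}$ separately. The main obstacle is precisely this combined bookkeeping of the last two steps: keeping every Koszul sign straight while reconciling $d(A)$, which treats all rows uniformly, against the length data, which splits even from odd rows. I would organise it by isolating the purely even contribution (recovering the classical Beilinson--Lusztig--MacPherson versus Kazhdan--Lusztig normalisation comparison), the purely odd contribution, and the mixed terms, which are exactly the ones producing $\widehat A$.
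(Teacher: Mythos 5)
Your proposal follows essentially the same route as the paper: both transport $\phi_A$ through the $\sH$-module isomorphism $\fT(m|n,r)\cong V(m|n)^{\otimes r}$, identify the resulting sign with $(-1)^{\widehat A}$ via the odd-inversion/crossing count (i.e.\ the identity $\widehat A=\widehat d$), and match the $\up$-normalisations via the identity $d(A)=l(d^*)-l({}^*d)+l(d)-l(w_{0,\mu^{(0)}})+l(w_{0,\mu^{(1)}})$. The only difference is that the paper simply quotes these two identities from \cite{DG} (Lem.~2.3 and Rem.~4.3) and quotes the isomorphism --- which carries the sign $(-1)^{\widehat d}$ on $v_{\bsi_\lambda d}$ and sends $v_{\bsi_\lambda}$ to $x_{\lambda^{(0)}}y_{\lambda^{(1)}}$ with trivial scalar --- from \cite[Prop.~8.3]{DR}, whereas you propose to re-derive them; your undetermined-$(\epsilon_A,e_A)$ bookkeeping would absorb the minor inaccuracy in your guessed normalisation $c_\lambda$ (which is in fact $1$).
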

\begin{proof} By \cite[Prop.8.3]{DR}, the map $f: V_R(m|n)^{\otimes r} \rightarrow\fT_R(m|n,r)$  sending $(-1)^{\hat{d}}v_{\bsi_\lambda d}$ to $x_{\lambda^{(0)}}y_{\lambda^{(1)}}T_d$, for any $\lambda\in\Lambda(m|n,r)$ and $d\in \mathcal{D}_\lambda$, is an $\mathcal{H}$-module isomorphism.  Here $\wh d=\sum_{k=1}^{r-1}\sum_{k<l,i_k>i_l}\widehat{i_k}\widehat{i_l}$ for $\bsi=\bsi_\la d$. It is direct to check that $\phi_A\circ f=(-1)^{\hat{d}}f\circ\zeta_A$

Now, for $A\in M(m|n,r)$ with $A=\jmath(\lambda, d,\mu)$, we have by Remark \cite[Remark 4.3]{DG},  $d(A)=l(d^*)-l(^*d)+l(d)-l(w_{0,\mu^{(0)}})+l(w_{0,\mu^{(1)}})$ and, by \cite[Lem.~2.3]{DG}, $\wh A=\wh d$.   The assertion follows immediately.
\end{proof}
In \cite{DR}, a canonical basis $\{\Theta_A\}_{A}$ is constructed relative to the basis $\{\vph_A\}_A$ and the bar involution defined in \cite[Th.~6.3]{DR}. By the lemma above, the canonical basis $\{\Xi_A\}_{A}$ relative to the basis $\{[A]\}_A$ and the same bar involution can be similarly defined.

\begin{corollary}\label{basis Xi} Let $\mathscr C_r=\{\Xi_A\mid A\in M(m|n,r)\}$ (resp., $\{\Theta_A\mid A\in M(m|n,r)\}$) be the canonical basis defined relative to basis $\{[A]\}_A$ (resp., $\{\vph_A\}_A$), the bar involution \eqref{bar on S}, and the Bruhat order $\leq$. Then $\Xi_A=(-1)^{\wh A}\Theta_A$.
\end{corollary}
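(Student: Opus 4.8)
The plan is to deduce Corollary~\ref{basis Xi} directly from Lemma~\ref{DRB-DHB} together with the general uniqueness property of canonical (Kazhdan--Lusztig type) bases. Recall that both $\{\Theta_A\}$ and $\{\Xi_A\}$ are defined to be the unique bar-invariant elements which are congruent, modulo $\up^{-1}\mbz[\up^{-1}]$-coefficients, to $\vph_A$ respectively $[A]=(-1)^{\wh A}\vph_A$ with respect to the Bruhat order $\leq$ on $M(m|n,r)$. So the task is purely formal once one checks that multiplying the ``seed'' basis $\{\vph_A\}$ by the signs $(-1)^{\wh A}$ is compatible with the bar involution and with the triangularity.

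First I would record the two defining properties of $\Theta_A$: it lies in the $\mbz[\up^{-1}]$-span (in fact the $\sZ$-span) of $\{\vph_B\}$, it satisfies $\overline{\Theta_A}=\Theta_A$, and $\Theta_A=\vph_A+\sum_{B<A}p_{B,A}\vph_B$ with $p_{B,A}\in\up^{-1}\mbz[\up^{-1}]$; these determine $\Theta_A$ uniquely given the bar involution \eqref{bar on S} and the structure constants $r_{B,A}$. Next I would observe the key compatibility: since $\widehat{\,\cdot\,}$ depends only on the row and column sums $(\la,\mu)$ of a matrix (it is built from the entries $a_{i,j}$, but in the triangular relation $B\le A$ forces $\ro(B)=\ro(A)$ and $\co(B)=\co(A)$, hence $\wh B$ and $\wh A$ are computed from matrices with the same marginals) one has, whenever $B\leq A$, the congruence $\wh B\equiv\wh A\pmod 2$. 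This is the crucial point and I expect it to be the only real step with content; it should follow from the explicit formula $\wh A=\sum_{m<k<i\leq m+n,\,1\leq j<l\leq m+n}a_{i,j}a_{k,l}$ by a parity argument analogous to the one underlying \cite[Lem.~2.3]{DG}, using that moving within a fixed double coset (equivalently, along the Bruhat order with fixed $\la,\mu$) changes the odd entries of the matrix only in a sign-preserving way. Concretely, if $B\le A$ then $A$ and $B$ differ by operations that preserve all marginal sums and, by the description after \eqref{M(m|n)} of the $0/1$ pattern in the odd blocks, preserve $\wh{\,\cdot\,}$ mod $2$.

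Granting this parity statement, the argument is immediate. Set $\Xi_A':=(-1)^{\wh A}\Theta_A$. Then $\Xi_A'$ is a $\sZ$-linear combination of the $\vph_B$, hence of the $[B]=(-1)^{\wh B}\vph_B$; explicitly $\Xi_A'=(-1)^{\wh A}\vph_A+\sum_{B<A}(-1)^{\wh A}p_{B,A}\vph_B=[A]+\sum_{B<A}(-1)^{\wh A-\wh B}p_{B,A}[B]$, and by the parity compatibility the signs $(-1)^{\wh A-\wh B}$ are all $+1$, so $\Xi_A'=[A]+\sum_{B<A}p_{B,A}[B]$ with $p_{B,A}\in\up^{-1}\mbz[\up^{-1}]$, i.e. $\Xi_A'$ is $\leq$-triangular to $[A]$ with the correct coefficients. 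Moreover $\overline{\Xi_A'}=(-1)^{\wh A}\overline{\Theta_A}=(-1)^{\wh A}\Theta_A=\Xi_A'$ since the signs $(-1)^{\wh A}\in\{\pm1\}$ are fixed by the bar involution. Thus $\Xi_A'$ satisfies all three defining conditions of the canonical basis relative to $\{[A]\}$, the bar involution \eqref{bar on S}, and $\leq$; by the uniqueness of such an element, $\Xi_A=\Xi_A'=(-1)^{\wh A}\Theta_A$, as claimed.

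The only subtlety worth flagging is to make sure the bar involution used in both constructions is literally the same map \eqref{bar on S} (it is, by the statement of the corollary), and that this same map is used to define the $r_{B,A}$ for the $[A]$-basis; since $[A]$ and $\vph_A$ differ by a scalar in $\{\pm1\}\subset\sZ$ fixed by $\bar{\ }$, the transition matrix $(r_{B,A})$ is conjugated by the (diagonal, $\pm1$) change-of-basis matrix, and by the same parity argument this conjugation is trivial on the relevant entries, so the two ``KL systems'' coincide up to the overall signs. Hence the main obstacle is entirely concentrated in the parity lemma $\wh B\equiv\wh A\pmod 2$ for $B\le A$; everything else is bookkeeping.
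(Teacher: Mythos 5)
Your overall strategy --- invoke Lemma \ref{DRB-DHB} to rewrite $(-1)^{\wh A}\Theta_A$ in the $\{[B]\}$-basis, check bar-invariance and $\leq$-triangularity, and conclude by uniqueness --- is exactly the paper's argument. However, the step you single out as ``the only real step with content,'' namely the parity claim $\wh B\equiv\wh A\pmod 2$ whenever $B\leq A$, is false, and so is the heuristic you offer for it ($\wh{\,\cdot\,}$ does \emph{not} depend only on the marginals $\ro$ and $\co$; it genuinely depends on $d$). A concrete counterexample: in $M(1|2,2)$ take $\la=\mu=(0|1,1)$, $B=\jmath(\la,1,\mu)=\diag(0,1,1)$ and $A=\jmath(\la,s_1,\mu)=E_{2,3}+E_{3,2}$. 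Then $B<A$ in the Bruhat order \eqref{Bruhat}, but $\wh B=0$ while $\wh A=a_{3,2}a_{2,3}=1$. So the lemma on which you say the whole proof rests cannot be proved, and as written your argument has a genuine gap.

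Fortunately the gap is entirely removable, because the parity statement is not needed anywhere. The defining triangularity condition for the canonical basis only requires the off-diagonal coefficients to lie in $\up^{-1}\mbz[\up^{-1}]$, and this set is stable under multiplication by $-1$. Thus from $\Theta_A=\vph_A+\sum_{B<A}p_{B,A}\vph_B$ one gets $(-1)^{\wh A}\Theta_A=[A]+\sum_{B<A}(-1)^{\wh A+\wh B}p_{B,A}[B]$ with $(-1)^{\wh A+\wh B}p_{B,A}\in\up^{-1}\mbz[\up^{-1}]$ regardless of the signs, and bar-invariance is clear since $(-1)^{\wh A}$ is a bar-fixed scalar; uniqueness then gives $\Xi_A=(-1)^{\wh A}\Theta_A$. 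This is precisely the paper's proof. The same remark disposes of your final worry about the matrices $(r_{B,A})$: conjugating by the diagonal $\pm1$ matrix changes nothing relevant, again without any parity input. In short, delete the parity lemma and your proof becomes correct and identical to the paper's.
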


\begin{proof}Since $\{\Xi_A\}_{A}$ (resp., $\{\Theta_A\}_{A}$) is the unique basis satisfying $\overline{\Xi_A}=\Xi_A$ (resp., $\overline{\Theta_A}=\Theta_A$) and
$$\Xi_A-[A]\in\sum_{B< A}\up^{-1}\mbz[\up^{-1}][B]\quad (\text{resp., }\Theta_A-\vph_A\in\sum_{B< A}\up^{-1}\mbz[\up^{-1}]\vph_B).$$
If we write $\Theta_A=\vph_A +\sum_{B< A}p_{B,A}\vph_B$, then, by the lemma above,
$$(-1)^{\wh A}\Theta_A=[A]+ \sum_{B< A}(-1)^{\wh A+\wh B}p_{B,A}[B]\text{ and }\overline{(-1)^{\wh A}\Theta_A}=(-1)^{\wh A}\Theta_A.$$
The uniqueness forces $\Xi_A=(-1)^{\wh A}\Theta_A$.
\end{proof}

We will discuss a PBW type basis for $\sS(m|n,r)$ at the end of \S5.

\section{Multiplication formulas and a stabilisation property}
We first record the following multiplication formulas discovered in \cite[Props.~4.4-5]{DG}.
For a fixed matrix $A\in M( m|n,r)$, $h\in[1,m+n)$ and $p\geq 1$, let
\begin{equation}\label{UL}
\aligned
U_p=U_p(h,\ro(A))&=\diag(\ro(A)-p\bse_{h+1})+pE_{h,h+1}\in M(m|n,r)\\
L_p=L_p(h,\ro(A))&=\diag(\ro(A)-p\bse_{h})+pE_{h+1,h}\in M(m|n,r).\endaligned
\end{equation}
\begin{proposition}\label{integral basis multiplication}Maintain the notation above. The
following multiplication formulas hold in the $\up$-Schur
superalgebra $\mathcal{S}(m|n,r)$ over $\mathcal{Z}$:

\noindent
If {\bf $\boldsymbol{h\neq m}$}, then
\begin{itemize}
\item[(1$^+$)] $[U_p][A]=\sum_{\substack{\nu\in\Lambda(m|n,p)\\\nu\leq
 \row_{h+1}(A)}}\up_h^{f_h(\nu,A)}\prod_{k=1}^{m+n}\overline{\left[\!\!\left[a_{h,k}+\nu_k\atop\nu_k\right]\!\!\right]}_{\up_h^2}
 [A+\sum_l\nu_l(E_{h,l}-E_{h+1,l})],$\\

\item[(1$^-$)]$[L_p][A]=\sum_{\substack{\nu\in\Lambda(m|n,p)\\\nu\leq
 \row_{h}(A)}}\up_{h+1}^{g_h(\nu,A)}\prod_{k=1}^{m+n}\overline{\left[\!\!\left[a_{h+1,k}+\nu_k\atop\nu_k\right]\!\!\right]}_{\up_{h+1}^2}
 [A-\sum_l\nu_l(E_{h,l}-E_{h+1,l})],$
\end{itemize}
 where $\la\leq\mu\iff \la_i\leq\mu_i\;\forall i$,
 \begin{equation}\label{beta_h1}
 \aligned
 f_h(\nu,A)&=\sum_{j\geq
 t}a_{h,j}\nu_t-\sum_{j>t}a_{h+1,j}\nu_t+\sum_{t<t'}\nu_t\nu_{t'},\text{ and }\\
 g_h(\nu,A)&=\sum_{j\leq
 t}a_{h+1,j}\nu_t-\sum_{j<t}a_{h,j}\nu_t+\sum_{t<t'}\nu_t\nu_{t'}.\endaligned
 \end{equation}
\noindent
If {\bf$\boldsymbol{h= m}$}, then $[U_p][A]=0=[L_p][A]$ for all $p>1$ and
$$\aligned
(2^+)\; [U_1][A]&=\sum_{\substack{k\in[1,m+n]\\ a_{m+1,k}\geq
1}}(-1)^{\sum_{i>m,j<k}a_{i,j}}\up_m^{f_m(\bse_k,A)}\overline{[\![a_{m,k}+1]\!]}_{\up^2_m}[A+E_{m,k}-E_{m+1,k}];\\
(2^-)\; [L_1][A]&=\sum_{\substack{k\in[1,m+n]\\ a_{m,k}\geq
1}}(-1)^{\sum_{i>m,j<k}a_{i,j}}\up_{m+1}^{g_m(\bse_k,A)}\overline{[\![a_{m+1,k}+1]\!]}_{\up^2_{m+1}}[A-E_{m,k}+E_{m+1,k}],\endaligned$$
where
\begin{equation}\label{beta_m gamma_m}
f_m(\bse_k,A)=\sum_{j\geq k}a_{m,j}+\sum_{j>k}a_{m+1,j}\text{ and }
g_m(\bse_k,A)=\sum_{j\leq
k}a_{m+1,j}+\sum_{j<k}a_{m,j}.
\end{equation}
 \end{proposition}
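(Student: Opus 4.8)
The plan is to obtain these multiplication formulas as specialisations/consequences of the corresponding formulas in the tensor-space (or Hecke-algebra) model, exactly as in \cite[Props.~4.4--5]{DG}, rather than to reprove them from scratch. Under the identification of Lemma~\ref{DRB-DHB}, we have $[A]=\xib_A=\up^{-d(A)}\zeta_A$, so each assertion is equivalent to a formula for $\zeta_{U_p}\zeta_A$ (resp. $\zeta_{L_p}\zeta_A$) inside $\End_{\sH}(V(m|n)^{\otimes r})$, twisted by the scalar $\up^{-d(U_p)-d(A)+d(A')}$ for each resulting matrix $A'$. Thus the first step is to record the raw product $\zeta_{U_p}\zeta_A$: evaluating on the canonical element $v_{\co(A)}$ and using the relative-norm description of $\zeta_A$, one expands $v_{\co(A)}\,N(e)$ and applies $\zeta_{U_p}$, which multiplies each place-permuted basis vector by a factor governed by how many indices equal $h$ versus $h+1$ in each column block. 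This is a purely combinatorial bookkeeping of the Hecke action on tensors that was already carried out in \cite{DG}; I would quote it and then track the $\up$-power normalisation $d(\cdot)$.

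The second step is the normalisation calculation: one must show that the difference $d(U_p)+d(A)-d(A+\sum_l\nu_l(E_{h,l}-E_{h+1,l}))$ equals $f_h(\nu,A)$ (in the $\up_h$-grading) in case (1$^+$), and the analogous identities for (1$^-$), (2$^+$), (2$^-$). Since $d(B)=\sum_{i>k,j<l}b_{i,j}b_{k,l}+\sum_{j<l}(-1)^{\hat i}b_{i,j}b_{i,l}$ is an explicit quadratic form in the entries, and $U_p$, $L_p$ have a very simple shape (diagonal plus one off-diagonal block in row $h$ or $h+1$), this difference is a manageable quadratic expansion: the cross terms between the modified rows $h,h+1$ and the untouched rows collapse, and what survives is precisely the sum of the three displayed pieces in \eqref{beta_h1} (or \eqref{beta_m gamma_m}). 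The bar-conjugated Gaussian binomials $\overline{[\![a_{h,k}+\nu_k\atop\nu_k]\!]}_{\up_h^2}$ arise from the standard computation of the relative norm $N_{\fS_r,\fS^d_\lambda\cap\fS_\mu}$, i.e.\ from summing $(-\up^2)^{\pm\ell(w)}$ over the relevant (parabolic) coset, whose Poincaré polynomial is a product of such $\up$-binomials; the bar appears because $d(A)$ is subtracted rather than added.

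The third step is the collapse to case $h=m$. Here I would invoke the structural fact recorded just after \eqref{M(m|n)}, namely that for any matrix in $M(m|n,r)$ the entries $a_{i,j}$ with $i\le m<j$ or $j\le m<i$ lie in $\{0,1\}$; consequently the matrices $U_p(m,\ro(A))$ and $L_p(m,\ro(A))$ fail to lie in $M(m|n,r)$ for $p>1$, so $[U_p]$ and $[L_p]$ are literally zero as claimed, and for $p=1$ the sum over $\nu\in\Lambda(m|n,1)$ degenerates to a sum over single basis vectors $\bse_k$. The extra sign $(-1)^{\sum_{i>m,j<k}a_{i,j}}$ is exactly the sign bookkeeping coming from reordering odd tensor factors past each other when the hop crosses the even/odd boundary — i.e.\ it is the change in the $\wh{(\,\cdot\,)}$ statistic of \eqref{hatA} between $A$ and $A\pm(E_{m,k}-E_{m+1,k})$, so it is forced by Lemma~\ref{DRB-DHB} once one writes $[A]=(-1)^{\wh A}\vph_A$ and compares with the (sign-free) formula for the $\vph$-basis.

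The main obstacle is the sign and normalisation bookkeeping at the even/odd boundary $h=m$: keeping consistent track of (i) the Koszul-type signs from permuting odd vectors in the place action, (ii) the $\up$-power $d(A)$ normalisation, and (iii) the sign twist $(-1)^{\wh A}$ relating $[A]$ to $\vph_A$, and checking that all three combine to give precisely the stated exponents $f_m(\bse_k,A)$, $g_m(\bse_k,A)$ and the single sign $(-1)^{\sum_{i>m,j<k}a_{i,j}}$. Away from the boundary ($h\ne m$) no odd-permutation signs intervene and the argument is the routine quadratic-form computation of step two, parallel to the non-super BLM/Du--Parshall case; so essentially all the genuine work is in the $h=m$ normalisation, which is why I would lean on \cite[Props.~4.4--5]{DG} and \cite[Rem.~4.3, Lem.~2.3]{DG} for the hardest identities rather than redo them.
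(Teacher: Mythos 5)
Your proposal matches the paper's treatment: Proposition \ref{integral basis multiplication} is given with no proof at all --- the text merely ``records'' the formulas from \cite[Props.~4.4--5]{DG}, which is precisely the source you defer to for the substantive computations (the relative-norm expansion, the $d(\cdot)$ normalisation, and the boundary signs). One small caution on your sketch: at $h=m$ the sign $(-1)^{\sum_{i>m,j<k}a_{i,j}}$ is not literally the change in the statistic $\widehat{(\cdot)}$ of \eqref{hatA} under $A\mapsto A\pm(E_{m,k}-E_{m+1,k})$ (row $m$ never enters $\widehat{(\cdot)}$ and that change is only $\sum_{i>m+1,j<k}a_{i,j}$; the remaining $\sum_{j<k}a_{m+1,j}$ must come from the Koszul signs in the place-permutation action), but since you explicitly lean on \cite{DG} for exactly these identities this imprecision does not undermine the argument.
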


We now describe a stabilisation property from these formulas which is the key to a realisation of the supergroup $\bfU(\glmn)$.

For $A=(a_{i,j})\in M(m|n,r)$, let
\begin{equation}\label{signAbar}
\bar{A}=\sum_{\substack{m+n\geq i> m\geq k\geq1
\\m<j<l\leq m+n}}a_{i,j}a_{k,l}.
\end{equation}
Note that this number is different from the number $\widehat A$ defined in \eqref{hatA}.

Consider the set of matrices with zero diagonal:
$$M(m|n)^\pm=\{A=(a_{i,j})\in M(m|n)\mid a_{i,i}=0,
1\leq i\leq m+n\}.$$
Define $M(m|n)^+$ (resp., $M(m|n)^-$) as the subset of upper (resp., lower) triangular elements in $M(m|n)^\pm$.
For $A\in M_{m+n}(\mbz)$ and
$\bsj=(j_1,j_2,\cdots,j_{m+n})\in\mathbb{Z}^{m+n}$, define
\begin{equation}\label{Ajr}
A(\bsj,r)=\begin{cases}\sum_{\substack{\lambda\in\Lambda(m|n,r-|A|)}}(-1)^{\overline{A+\diag(\lambda)}}\up^{\lambda\centerdot\bsj}[{A+\diag(\lambda})],&\text{ if }A\in M(m|n)^\pm,|A|\leq r;\\
0,&\text{ otherwise,}\end{cases}
\end{equation}
where $\centerdot=\centerdot_s$ denotes the super (or signed) ``dot product'':
\begin{equation}\label{dot product}
\lambda\centerdot\bsj=\sum_{i=1}^{m+n}(-1)^{\hat{i}}\lambda_ij_i=\la_1j_1+\cdots+\la_mj_m-\la_{m+1}j_{m+1}-\cdots-\la_{m+n}j_{m+n}.
\end{equation}

We have the following stabilisation property.
\begin{proposition}[{\cite[5.3, 5.6]{DG}}] \label{DGMF}
For all $r\geq0$, the set
$$\mathcal L_r=\{A(\bsj,r)\mid A\in M(m|n)^\pm,\bsj\in\mathbb{Z}^{m+n}\}$$ spans the $\up$-Schur superalgebra $\bsS(m|n,r)$ over $\mathbb Q(\up)$. Moreover, $E_{h,h+1}(\bfl,r)A(\bsj,r)$ and $E_{h+1,h}(\bfl,r)A(\bsj,r)$ can be written as a linear combination of certain (linearly independent) elements of $\mathcal L_r$ with coefficients independent of $r$ for all $r\geq|A|$.
\end{proposition}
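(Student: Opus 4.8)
The plan is to prove Proposition~\ref{DGMF} by deriving both assertions directly from the multiplication formulas in Proposition~\ref{integral basis multiplication}, working out the coefficients in the definition \eqref{Ajr} so that the dependence on $r$ cancels. First I would record that the spanning claim follows from a dimension/triangularity argument: one shows that every normalised basis element $[A+\diag(\la)]$ can be recovered from the $A(\bsj,r)$ by inverting the transition matrix whose entries are the Laurent monomials $\up^{\la\centerdot\bsj}$, using the standard nondegeneracy of the pairing between $\Lambda(m|n,r-|A|)$ and a suitable finite set of $\bsj$'s modulo the lattice fixed by the symmetry of $\diag(\la)$; this is the same Vandermonde-type argument as in \cite{BLM} and \cite{DG}, so I would simply cite \cite[5.3]{DG} and move on.

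The substance is the second assertion, the $r$-independence of the structure constants. I would take $A\in M(m|n)^\pm$, fix $h\in[1,m+n)$, and compute $E_{h,h+1}(\bfl,r)A(\bsj,r)$ by expanding via \eqref{Ajr} and applying formula (1$^+$) (or (2$^+$) when $h=m$) to each product $[U_1(h,\ro(A)+\diag(\la))][A+\diag(\la)]$. The key point is that in (1$^+$) with $p=1$ only $\nu=\bse_k$ occurs, the quantum binomial $\overline{\left[\!\!\left[a_{h,k}+\nu_k\atop\nu_k\right]\!\!\right]}$ collapses to $\overline{[\![a_{h,k}+1]\!]}$, and the exponent $f_h(\bse_k,A+\diag(\la))$ differs from $f_h(\bse_k,A)$ only by a term linear in the diagonal entries $\la_h,\la_{h+1}$ — precisely the piece that gets absorbed into the shift of $\bsj$ and into the sign $(-1)^{\overline{\phantom{x}}}$ when one re-collects the result back into the form $\sum_\la (-1)^{\cdots}\up^{\la\centerdot\bsj'}[A'+\diag(\la)]$. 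So the steps are: (i) substitute and expand; (ii) isolate the diagonal-dependent part of the exponent $f_h$ and of the sign $\overline{A+\diag(\la)}$; (iii) show this part equals $\la\centerdot(\text{some fixed vector depending only on }A,h,k)$ up to the prescribed sign, so that after reindexing $\la$ the sum over $\la$ is again of the shape $A'(\bsj',r)$ for a matrix $A'=A+E_{h,k}-E_{h+1,k}$ (pushed back into $M(m|n)^\pm$ by extracting its diagonal) and a shifted $\bsj'$; (iv) read off that the surviving scalar coefficients — the leftover $\up$-powers and the $\overline{[\![a_{h,k}+1]\!]}$ factors — involve only the off-diagonal entries of $A$ and hence are independent of $r$. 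The $E_{h+1,h}$ case is entirely symmetric, using (1$^-$)/(2$^-$) and $g_h$.

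The main obstacle I anticipate is bookkeeping of the two sign conventions: the $(-1)^{\overline{A+\diag(\la)}}$ built into \eqref{Ajr} versus the extra signs $(-1)^{\sum_{i>m,j<k}a_{i,j}}$ that appear in the $h=m$ formulas (2$^\pm$), together with the $\mbz_2$-grading bookkeeping hidden in $\up_h=\up^{(-1)^{\hat h}}$. One has to check that $\overline{A'+\diag(\la)}-\overline{A+\diag(\la)}$ is independent of $\la$ (it is, since $\bar{\,\cdot\,}$ as defined in \eqref{signAbar} only pairs an odd row-index $i>m$ against entries with column-indices $j<l$, none of which is a diagonal slot once one notes $a_{i,i}=0$ cannot contribute) — but verifying this cleanly, and matching it against the case-$h=m$ sign, is where errors creep in. A secondary subtlety is that when $h=m$ the products with $p>1$ vanish, so one must make sure the expansion of $E_{m,m+1}(\bfl,r)$ indeed only involves $U_1$; this is immediate from $E_{m,m+1}(\bfl,r)=U_1(m,\cdot)(\bfj,r)$-type identities but should be stated. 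Once these sign and grading checks are in place, the computation is a finite manipulation whose output manifestly does not see $r$, and the proposition follows; I would organise it so that the generic $h\neq m$ case is done first in full, and the $h=m$ case is presented as the parallel computation with the binomials replaced by single brackets and the extra sign tracked.
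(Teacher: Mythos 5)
First, a structural remark: this proposition is not proved in the paper at all --- it is imported verbatim from \cite[5.3, 5.6]{DG} --- so there is no internal proof to compare with. Your outline is the standard BLM-type stabilisation computation, which is indeed the right strategy and is what \cite{DG} carries out.

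There is, however, a genuine gap in your step (iv). You assert that the surviving binomial factors $\overline{[\![a_{h,k}+1]\!]}$ ``involve only the off-diagonal entries of $A$''. This fails for the term $k=h$, which occurs whenever $a_{h+1,h}\geq 1$ --- and the proposition concerns general $A\in M(m|n)^\pm$, not the upper-triangular matrices of Lemmas \ref{integral generators} and \ref{pEA}, so this term cannot be discarded. Writing $A'=A+\diag(\la)$, one has $a'_{h,h}=\la_h$, so formula (1$^+$) (or (2$^+$)) contributes a term $\up_h^{f_h(\bse_h,A')}\,\overline{[\![\la_h+1]\!]}_{\up_h^2}\,[A-E_{h+1,h}+\diag(\la+\bse_h)]$, whose coefficient depends on the diagonal entry $\la_h$ in a non-monomial way and cannot simply be absorbed into a shift of $\bsj$. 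The missing idea is the identity $\up_h^{\la_h}\,\overline{[\![\la_h+1]\!]}_{\up_h^2}=[\la_h+1]_{\up_h}=\frac{\up_h^{\la_h+1}-\up_h^{-\la_h-1}}{\up_h-\up_h^{-1}}$, a $\mbq(\up)$-linear combination of the two monomials $\up_h^{\pm\la_h}$: the sum over $\la$ then splits into two sums, each reassembling into $(A-E_{h+1,h})(\bsj^{\pm},r)$ for two different shifts $\bsj^{\pm}$ of $\bsj$. This is precisely why the statement reads ``a linear combination of certain elements of $\mathcal L_r$'' rather than naming a single element, and without this splitting your computation does not close. (The symmetric term, with factor $\overline{[\![\la_{h+1}+1]\!]}_{\up_{h+1}^2}$, arises in the $E_{h+1,h}$ case.) The rest of the outline --- the Vandermonde argument for spanning, the linearity in $\la_h,\la_{h+1}$ of $f_h(\bse_k,A+\diag(\la))$ for $k\neq h$, and the sign bookkeeping at $h=m$ --- is sound.
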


We write explicitly a special case of these multiplication formulas.

\begin{lemma}[{\cite[6.2]{DG}}]\label{integral generators} For fixed $A=(a_{i,j})\in M(m|n)^+$ and $1\leq h<m+n$, let
\begin{equation}\label{sigmaf}
\aligned\sigma(k)&=\sigma_{A}(k):=\sum_{i\leq m,j>k}a_{i,j}\\
f(h,k)&=f_{A}(h,k):=\sum_{j\geq k}a_{h,j}-(-1)^{\delta_{m,h}}\sum_{j>k}a_{h+1,j},\endaligned \qquad(1\leq k\leq m+n).
\end{equation}
The following multiplication formulas hold in $\sS(m|n,r)$ for all $r\geq |A|$:
\begin{equation*}
\begin{aligned}
E_{h,h+1}(\mathbf{0},r)&A(\mathbf{0},r)=(-1)^{\sigma(h+1)\delta_{h,m}}\up_h^{f(h,h+1)}\overline{[\![a_{h,h+1}+1]\!]}_{\up^2_h}(A+E_{h,h+1})(\mathbf{0},r)\\
&\!\!\!\!\!+\sum_{k>h+1,a_{h+1,k}\geq
1}(-1)^{\sigma(k)\delta_{h,m}}\up_h^{f(h,k)}\overline{[\![a_{h,k}+1]\!]}_{\up^2_h}(A+E_{h,k}-E_{h+1,k})(\mathbf{0},r);\\
\end{aligned}
\end{equation*}
\end{lemma}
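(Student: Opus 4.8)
The plan is to derive the displayed formula as a direct specialisation of the high-order multiplication formula $(1^+)$ (for the case $h\neq m$) and $(2^+)$ (for $h=m$) of Proposition~\ref{integral basis multiplication}, applied to $[U_1][A']$ where $U_1=U_1(h,\ro(A'))=E_{h,h+1}+\diag(\ro(A')-\bse_{h+1})$, and then rewriting the result in terms of the $A(\bsj,r)$-notation of \eqref{Ajr}. First I would note that $E_{h,h+1}(\mathbf 0,r)=\sum_{\la}(-1)^{\overline{E_{h,h+1}+\diag(\la)}}[E_{h,h+1}+\diag(\la)]$; since $\overline{E_{h,h+1}+\diag(\la)}=0$ (the sign $\bar{(\cdot)}$ in \eqref{signAbar} involves only strictly off-diagonal entries in the ``odd'' block, and $E_{h,h+1}$ contributes nothing there as $h<m+n$), this is just $\sum_\la[E_{h,h+1}+\diag(\la)]$, i.e. each term is some $[U_1]$ with $U_1$ as above and $\ro(U_1)$ ranging over $\Lambda(m|n,r-1)$.

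Next I would multiply term by term: for each $\la$ with $\diag(\la)+E_{h,h+1}\le$ ``row data'' compatible with $A$, the product $[U_1][A+\diag(\mu)]$ (for the matching $\mu$ so that $\co(U_1)=\ro(A+\diag(\mu))$) is expanded by $(1^+)$ with $p=1$: the sum over $\nu\in\Lambda(m|n,1)$ collapses to $\nu=\bse_k$ for $k$ with $a_{h+1,k}+\delta_{\ldots}\ge 1$, giving the two families of terms $A+E_{h,h+1}+\diag(\cdots)$ (from $\nu=\bse_{h+1}$, where the ``$E_{h,k}-E_{h+1,k}$'' collapses because row $h+1$ of $A$ in column $h+1$ is zero, leaving a net $+E_{h,h+1}$ after reabsorbing the diagonal shift) and $A+E_{h,k}-E_{h+1,k}+\diag(\cdots)$ for $k>h+1$ with $a_{h+1,k}\ge 1$. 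The exponent $\up_h^{f_h(\nu,A)}$ with $\nu=\bse_k$ specialises, after absorbing the diagonal contributions $\la\centerdot\bsj$ that are packaged into the $A(\mathbf 0,r)$ normalisation, exactly to $\up_h^{f(h,k)}$ with $f(h,k)$ as in \eqref{sigmaf} — here one uses that $\diag(\la)$ changes neither the off-diagonal entries $a_{h,j},a_{h+1,j}$ nor the relevant length data, so the $r$-dependence cancels (this is precisely the stabilisation content of Proposition~\ref{DGMF}). The quantum-integer coefficient $\overline{[\![a_{h,k}+\nu_k]\!]}_{\up_h^2}$ becomes $\overline{[\![a_{h,k}+1]\!]}_{\up^2_h}$.

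Finally I would handle the sign $(-1)^{\sigma(k)\delta_{h,m}}$: when $h\neq m$, formula $(1^+)$ carries no sign, and since $A\in M(m|n)^+$ means $A$ is upper triangular with $h<m+n$, all relevant matrices stay in $M(m|n)^\pm$, so the $\overline{(\cdot)}$-signs in the $A(\mathbf 0,r)$-expansions match up and no sign survives — consistent with $\delta_{h,m}=0$. When $h=m$, I would instead invoke $(2^+)$: there the sign $(-1)^{\sum_{i>m,j<k}a_{i,j}}$ appears, and one checks this equals $(-1)^{\sigma(k)}$ after accounting for the change-of-normalisation signs $(-1)^{\overline{(\cdot)}}$ between $[A+E_{m,k}-E_{m+1,k}+\diag(\cdots)]$ and the corresponding term $[A+\diag(\cdots)]$ in $A(\mathbf 0,r)$ — this is where $\bar A$ of \eqref{signAbar} and $\widehat A$ of \eqref{hatA} must be carefully distinguished, and it is the one bookkeeping step I expect to be genuinely delicate rather than routine. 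The cleanest route is to cite \cite[6.2]{DG} directly once the correspondence of notation is set up; the substance of the lemma is entirely contained in Propositions~\ref{integral basis multiplication} and~\ref{DGMF}, and the ``proof'' is really the translation between the $[A]$-basis language and the $A(\bsj,r)$-language together with the verification that the signs and exponents specialise as claimed. The main obstacle is thus not any hard estimate but the sign reconciliation in the $h=m$ case; everything else is a matter of setting $p=1$ and reading off the $\nu=\bse_k$ terms.
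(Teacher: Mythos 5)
Your proposal is correct and takes essentially the route the paper itself uses for the higher-order generalisation in Lemma \ref{pEA}: expand both factors via \eqref{Ajr}, apply Proposition \ref{integral basis multiplication} with $p=1$ (resp.\ $(2^+)$ when $h=m$), check that the exponents $f_h(\bse_k,A^\mu)$ and the Gaussian coefficients are independent of the diagonal shift, and reassemble the sums into $(A+E_{h,h+1})(\mathbf 0,r)$ and $(A+E_{h,k}-E_{h+1,k})(\mathbf 0,r)$. The one step you leave unverified --- that for $h=m$ the sign $(-1)^{\sum_{i>m,j<k}a_{i,j}}$ from $(2^+)$ combined with the normalisation signs $(-1)^{\overline{(\cdot)}}$ of \eqref{signAbar} collapses to $(-1)^{\sigma(k)}$ --- is indeed the only delicate point (it is also the only case not covered by the paper's proof of Lemma \ref{pEA}, which assumes $h\neq m$), and you have correctly isolated it as a finite bookkeeping computation rather than a conceptual issue.
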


We now generalise this property to the higher order situation. By Proposition \ref{integral basis multiplication}(2$^\pm$), we only need to consider the $h\neq m$ case.

\begin{lemma}\label{pEA}
Let  $A=(a_{i,j})\in M(m|n)^+$ and $h\in[1,m+n]$ with $h\neq m$ and let $p$ be any positive integer. Then, for all $r\geq|A|$, the following multiplication formula holds in $\sS(m|n,r)$:
$$(pE_{h,h+1})({\bf0},r)A({\bf0},r)=\sum_{\nu\in\Lambda (m|n,p)\atop \nu-\nu_{h+1}{\bf e}_{h+1}\leq \row_{h+1}(A)}\up_h^{f_h(\nu,A)}\prod_{k=1}^{m+n}\overline{\left[\!\!\left[{a_{h,k}+\nu_k\atop \nu_k}\right]\!\!\right]}_{\up_h^2}A^{[\nu]}({\bf0},r),$$
where $f_h(\nu,A)$ is defined in \eqref{beta_h1} and
$A^{[\nu]}=A+\nu_{h+1}E_{h,h+1}+\sum_{l\neq h+1}\nu_l(E_{h,l}-E_{h+1,l}).$
\end{lemma}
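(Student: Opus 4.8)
The plan is to deduce the higher-order formula from the first-order formula of Lemma~\ref{integral generators} by induction on $p$, using the known relation $(pE_{h,h+1})(\mathbf{0},r)=\overline{[\![p]\!]}_{\up_h^2}^{-1}E_{h,h+1}(\mathbf{0},r)\cdot((p-1)E_{h,h+1})(\mathbf{0},r)$ (up to a power of $\up_h$ coming from the $q$-binomial identity for the divided powers $E_{h,h+1}^{(p)}$ in the realisation), together with Proposition~\ref{integral basis multiplication}(1$^+$) applied to the matrix $A+(p-1)E_{h,h+1}$ or, more directly, by applying Proposition~\ref{integral basis multiplication}(1$^+$) to $E_{h,h+1}(\mathbf{0},r)$ itself in the form $[U_p][A]$ and then passing to the $(\mathbf{0},r)$-normalisation. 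Concretely, first I would rewrite $(pE_{h,h+1})(\mathbf{0},r)$ in terms of $[U_p]=[U_p(h,\ro(A'))]$ for the appropriate row data $A'=A+\diag(\lambda)$, unwinding the definition \eqref{Ajr}; the point is that $U_p(h,\ro(A+\diag\lambda))$ is exactly the matrix whose only off-diagonal entry is $p$ in position $(h,h+1)$, sitting above the diagonal $\diag(\ro(A+\diag\lambda)-p\bse_{h+1})$.

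Second, I would invoke Proposition~\ref{integral basis multiplication}(1$^+$) with $h\neq m$: $[U_p][A+\diag\lambda]=\sum_{\nu\leq\row_{h+1}(A+\diag\lambda)}\up_h^{f_h(\nu,A+\diag\lambda)}\prod_k\overline{\left[\!\!\left[a_{h,k}+\nu_k\atop\nu_k\right]\!\!\right]}_{\up_h^2}[A+\diag\lambda+\sum_l\nu_l(E_{h,l}-E_{h+1,l})]$. The crucial bookkeeping step is to observe that $\row_{h+1}(A+\diag\lambda)$ differs from $\row_{h+1}(A)$ only in the $(h+1)$st coordinate, where it equals $a_{h+1,h+1}+\lambda_{h+1}=\lambda_{h+1}$ (since $A\in M(m|n)^+$ has zero diagonal); hence the constraint $\nu\leq\row_{h+1}(A+\diag\lambda)$ becomes, after removing the $(h+1)$st slot, exactly $\nu-\nu_{h+1}\bse_{h+1}\leq\row_{h+1}(A)$, plus the harmless bound $\nu_{h+1}\leq\lambda_{h+1}$ which disappears upon summing over all $\lambda$. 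Likewise $f_h(\nu,A+\diag\lambda)$ must be compared with $f_h(\nu,A)$: the formula \eqref{beta_h1} for $f_h$ involves $a_{h,j}$ and $a_{h+1,j}$, and the diagonal corrections $\lambda_h,\lambda_{h+1}$ enter linearly; I would check that the extra terms combine with $\up^{\lambda\centerdot\bsj}$ and the diagonal shift $A+\diag\lambda\mapsto A^{[\nu]}+\diag\lambda'$ to reproduce precisely the $(\mathbf{0},r)$-weight of $A^{[\nu]}(\mathbf{0},r)$, so that after summing over $\lambda$ and relabelling $\lambda'=\lambda-\sum_l\nu_l\bse_{?}$ one gets the displayed right-hand side. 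The matrix $A+\sum_l\nu_l(E_{h,l}-E_{h+1,l})$ is not quite $A^{[\nu]}$: the term $l=h+1$ contributes $\nu_{h+1}(E_{h,h+1}-E_{h+1,h+1})$, and the $E_{h+1,h+1}$ part is a diagonal change absorbed into the $\diag\lambda$ summation, leaving exactly $A^{[\nu]}=A+\nu_{h+1}E_{h,h+1}+\sum_{l\neq h+1}\nu_l(E_{h,l}-E_{h+1,l})$ off-diagonally, which is why the statement is phrased that way.

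I expect the main obstacle to be the sign and $\up$-power bookkeeping in translating between the $[\,\cdot\,]$-normalisation (used in Proposition~\ref{integral basis multiplication}) and the $(\mathbf{0},r)$-normalisation of \eqref{Ajr}: one must verify that the signs $(-1)^{\overline{A+\diag\lambda}}$ behave additively under $A\mapsto A^{[\nu]}$ for $h\neq m$ (here the hypothesis $h\neq m$ is essential, since $\bar A$ in \eqref{signAbar} only sees rows $i>m$ against columns $>m$, so adding $\nu_{h+1}E_{h,h+1}+\sum_{l\neq h+1}\nu_l(E_{h,l}-E_{h+1,l})$ with $h,h+1$ both on the same side of $m$ produces no new sign — whereas for $h=m$ it would, which is why that case is excluded and handled separately by part (2$^\pm$) as already noted in the excerpt), and that the dot-product exponents match up after the change of summation variable. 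Once these identities are checked, the claimed formula follows by comparing coefficients of each linearly independent basis element $A^{[\nu]}(\mathbf{0},r)$, the coefficients being manifestly independent of $r$ for $r\geq|A|$ by Proposition~\ref{DGMF}. An alternative, possibly cleaner route is a direct induction on $p$ using $[\![p]\!]_{\up_h^2}$-multiples of $E_{h,h+1}(\mathbf{0},r)\cdot((p-1)E_{h,h+1})(\mathbf{0},r)$ and the first-order Lemma~\ref{integral generators}, with the $q$-Vandermonde identity $\sum$ over the splitting $\nu=\nu'+\bse_k$ doing the combinatorial work; I would present whichever of the two makes the $f_h$-exponent identity most transparent, but the single-step derivation from Proposition~\ref{integral basis multiplication}(1$^+$) seems shortest and avoids iterating the sign analysis.
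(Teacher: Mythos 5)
Your proposal is correct and follows essentially the same route as the paper's proof: expand both factors via \eqref{Ajr}, apply Proposition~\ref{integral basis multiplication}(1$^+$) to each $[U_p][A+\diag(\mu)]$, observe that the first $h$ entries of $\nu$ are forced to vanish so that $f_h(\nu,A+\diag\mu)=f_h(\nu,A)$ and the binomial products are diagonal-independent, absorb the $E_{h+1,h+1}$-shift and the bound $\nu_{h+1}\leq\mu_{h+1}$ into the reindexed diagonal summation, and check that $\overline{A^\mu}$ is unchanged for $h\neq m$. The paper organises the reindexing via an explicit partition of the index set by the value of $\nu_{h+1}$, which is exactly the bookkeeping you describe informally.
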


\begin{proof}
For notational simplicity, let
$$\La':=\Lambda(m|n,r-|A|)\text{ and }A^\mu=(a_{i,j}^\mu):=A+\diag(\mu)\;\;\forall \mu\in \La'.$$
By definition and Proposition \ref{integral basis multiplication}, the left hand side becomes
\begin{equation*}
\begin{aligned}
\text{LHS}&=\sum_{\lambda\in\Lambda(m|n,r-p)}[{pE_{h,h+1}+\diag(\lambda})]\sum_{\mu\in\La'}(-1)^{\overline{A^\mu}}[{A^\mu}]\\
&=\sum_{\mu\in\La'}(-1)^{\overline{A^\mu}}\sum_{\substack{\nu\in\Lambda(m|n,p)\\\nu\leq\row_{h+1}(A^\mu)}}\up_h^{f_h(\nu,A^\mu)}\prod_{k=1}^{m+n}\overline{\left[\!\!\left[{a^\mu_{h,k}+\nu_k\atop \nu_k}\right]\!\!\right]}_{\up_h^2}[{A^\mu+\sum_{l}\nu_l(E_{h,l}-E_{h+1,l})}]\\
&=\sum_{\mu\in \La'}\sum_{\substack{\nu\in\Lambda(m|n,p)\\\nu\leq\row_{h+1}(A^\mu)}}\up_h^{f_h(\nu,A)}\prod_{k=1}^{m+n}\overline{\left[\!\!\left[{a_{h,k}+\nu_k\atop \nu_k}\right]\!\!\right]}_{\up_h^2}(-1)^{\overline{A^\mu}}[{A^\mu+\sum_{l}\nu_l(E_{h,l}-E_{h+1,l})}],
\end{aligned}
\end{equation*}
where the last equality is seen as follows.  Since $A\in M(m|n)^+$,  the first $h$ entries of $\row_{h+1}(A^\mu)$ are zero. Hence,
$\nu=(\underbrace{0,\ldots,0}_h,\nu_{h+1},\ldots)$. Thus,
\begin{equation*}
\begin{aligned}
f_h(\nu,A^\mu)&=\sum_{j\geq t}a^\mu_{h,j}\nu_t-\sum_{j>t}a^\mu_{h+1,j}\nu_t+\sum_{t<t'}\nu_t\nu_{t'}\\
&=\sum_{j\geq t>h}a^\mu_{h,j}\nu_t-\sum_{j>t>h}a^\mu_{h+1,j}\nu_t+\sum_{t<t'}\nu_t\nu_{t'}\\
&=\sum_{j\geq t>h}a_{h,j}\nu_t-\sum_{j>t>h}a_{h+1,j}\nu_t+\sum_{t<t'}\nu_t\nu_{t'}=f_h(\nu,A)
\end{aligned}
\end{equation*}
 and
$$\prod_{k=1}^{m+n}\overline{\left[\!\!\left[{a^\mu_{h,k}+\nu_k\atop \nu_k}\right]\!\!\right]}_{\up_h^2}
=\prod_{k>h}^{m+n}\overline{\left[\!\!\left[{a_{h,k}+\nu_k\atop \nu_k}\right]\!\!\right]}_{\up_h^2}
=\prod_{k=1}^{m+n}\overline{\left[\!\!\left[{a_{h,k}+\nu_k\atop \nu_k}\right]\!\!\right]}_{\up_h^2}.$$

Let $\La''(\mu)=\{\nu\in\La(m|n,p)\mid \nu\leq\row_{h+1}(A^\mu)\}$. Then, for $p'=\min(p,r-|A|)$,
$$\sX:=\{(\mu,\nu)\mid\mu\in\La',\nu\in\La''(\mu)\}=\bigcup_{a=0}^{p'}\sX_a,$$
where the union is disjoint and
$$\sX_a=\{(\mu,\nu)\in\sX\mid\nu_{h+1}=a\}=\{(\mu,\nu)\mid \mu\in \La', \mu_{h+1}\geq a,\nu\in\La''(\mu),\nu_{h+1}=a\}.$$
Clearly there is a bijection between sets
$$\{\nu\in\La''(\mu)\mid \nu_{h+1}=a\}\text{ and }\{\nu'\in\La(m|n,p-a)\mid\nu'\leq\row_{h+1}(A)\},$$
where $\nu'=\nu-\nu_{h+1}\bse_{h+1}=(\underbrace{0,\ldots,0}_{h+1},\nu_{h+1},\ldots)$.
Moreover, since $h\neq m$, by \eqref{signAbar},
$$\overline{A^\mu}=\overline{A^\mu+\sum_l\nu_l(E_{h,l}-E_{h+1,l})}=\overline{A^{[\nu]}+\diag(\la-\mu_{h+1}\bse_{h+1})}.$$
Continuing our computation by swapping the summations yields
\begin{equation*}
\begin{aligned}
\text{LHS}
&=\sum_{a=0}^{p'}\sum_{(\mu,\nu)\in\sX_a}\up_h^{f_h(\nu,A)}\prod_{k=1}^{m+n}\overline{\left[\!\!\left[{a_{h,k}+\nu_k\atop \nu_k}\right]\!\!\right]}_{\up_h^2}(-1)^{\overline{A^\mu}}[{A^\mu+\sum_{l}\nu_l(E_{h,l}-E_{h+1,l})}]\\
&=\sum_{a=0}^{p'}\sum_{\mu\in\La',\atop \mu_{h+1}\geq a}\sum_{\nu'\in\Lambda(m|n,p-a)\atop \nu'\leq \row_{h+1}(A)}\up_h^{f_h(\nu,A)}\prod_{k=1}^{m+n}\overline{\left[\!\!\left[{a_{h,k}+\nu_k\atop \nu_k}\right]\!\!\right]}_{\up_h^2}(-1)^{\overline{A^\mu}}\\
&\quad\,\cdot[{A^\mu+a(E_{h,h+1}-E_{h+1,h+1})+\sum_{l\neq h+1}\nu_l(E_{h,l}-E_{h+1,l})}],
\end{aligned}
\end{equation*}
where $\nu=\nu'+a\bse_{h+1}$,
\begin{equation*}\begin{aligned}
\text{\qquad\;}&=\sum_{a=0}^{p'}\sum_{\nu'\in\Lambda(m|n,p-a),\atop\nu'\leq \row_{h+1}(A)}\up_h^{f_h(\nu,A)}\prod_{k=1}^{m+n}\overline{\left[\!\!\left[{a_{h,k}+\nu_k\atop \nu_k}\right]\!\!\right]}_{\up_h^2}\\
&\,\quad\cdot\sum_{\mu\in\La'\atop \mu_{h+1}\geq a}(-1)^{\overline{A^\mu}}[{A+aE_{h,h+1}+\sum_{l\neq h+1}\nu_l(E_{h,l}-E_{h+1,l})}+\diag(\mu-a\bse_{h+1})]\\
&=\sum_{\nu\in\Lambda (m|n,p)\atop \nu-\nu_{h+1}{\bf e}_{h+1}\leq \row_{h+1}(A)}\up_h^{f_h(\nu,A)}\prod_{k=1}^{m+n}\overline{\left[\!\!\left[{a_{h,k}+\nu_k\atop \nu_k}\right]\!\!\right]}_{\up_h^2}A^{[\nu]}({\bf0},r)=\text{RHS}.
\end{aligned}
\end{equation*}
\end{proof}

\section{The realisation of a PBW basis for $\bfU(\mathfrak{gl}_{m|n})$}

We now use the stabilisation property developed in Lemmas \ref{integral generators} and \ref{pEA} to give a realisation of the Lusztig form $U_\sZ^\pm(\mathfrak{gl}_{m|n})$ of the $\pm$-parts of
$\bfU(\mathfrak{gl}_{m|n})$ and introduce the canonical basis for $U_\sZ^\pm(\mathfrak{gl}_{m|n})$. We first recall the realisation of $\bfU(\mathfrak{gl}_{m|n})$ via the stabilisation property mentioned in Proposition \ref{DGMF}.

Recall also the definition of the super commutator on homogeneous elements of a superalgebra with parity function $\hat\  \,$:
$$[X,Y]=XY-(-1)^{\hat X\hat Y}YX.$$

\begin{definition}[\cite{Z}] \label{sgroup}The quantum supergroup
  $\bfU=\bfU(\mathfrak{gl}_{m|n})$ is the superalgebra over $\mathbb Q(\up)$ with
$$\aligned
\text{even generators:}\;&\sfK_a,\sfK_a^{-1}, \sfE_{h},\sfF_{h},\;1\leq a,h\leq m+n,h\neq m,m+n,\text{ and }\\
\text{odd generators:}\;&\sfE_{m}, \sfF_{m}\\
\endaligned$$
 which satisfy the following relations:
\begin{itemize}
\item[(QS1)] $\sfK_a\sfK_a^{-1}=1,\sfK_a\sfK_b=\sfK_b\sfK_a;$

\item[(QS2)] $\sfK_a
\sfE_{h}=\up^{\bse_a\centerdot_s\alpha_h}\sfE_{h}\sfK_a,
\sfK_a\sfF_{h}=\up^{-\bse_a\centerdot_s\alpha_h}\sfF_{h}\sfK_a;$

\item[(QS3)]
$[\sfE_{h},\sfF_{k}]=\delta_{h,k}\frac{\sfK_{h}\sfK_{h+1}^{-1}-\sfK_{h}^{-1}\sfK_{h+1}}{\up_h-\up_h^{-1}},$

\item[(QS4)] $\sfE_{h}\sfE_{k}=\sfE_{k}\sfE_{h},
\sfF_{h}\sfF_{k}=\sfF_{k}\sfF_{h},$ if $|k-h|>1;$

\item[(QS5)] For $h\neq m$ and $|h-k|=1$,
$$\aligned&\sfE^2_{h}\sfE_{k}-(\up_h+\up_h^{-1})\sfE_{h}\sfE_{k}\sfE_{h}+\sfE_{k}\sfE^2_{h}=0,\\
&\sfF_{h}^2\sfF_{k}-(\up_h+\up_h^{-1})\sfF_{h}\sfE_{k}\sfF_{h}+\sfF_{k}\sfF_{h}^2=0,
\endaligned$$
\item[(QS6)] $\sfE_{m}^2=\sfF_{m}^2=[\sfE_{m},\sfE_{m-1,m+2}]=[\sfF_{m},\sfE_{m+2,m-1}]=0$, where
$\sfE_{m-1,m+2}$, $\sfE_{m+2,m-1}$ denote respectively the elements
$$\aligned
&\sfE_{m-1}\sfE_{m}\sfE_{m+1}-\up\sfE_{m-1}\sfE_{m+1}\sfE_{m}-\up^{-1}\sfE_{m}\sfE_{m+1}\sfE_{m-1}+\sfE_{m+1}\sfE_{m}\sfE_{m-1},\\
&\sfF_{m+1}\sfF_{m}\sfF_{m-1}-\up^{-1}\sfF_{m}\sfF_{m+1}\sfF_{m-1}-\up\sfF_{m-1}\sfF_{m+1}\sfF_{m}+\sfF_{m-1}\sfF_{m}\sfF_{m+1}.
\endaligned$$
\end{itemize}
\end{definition}
Clearly, $\bfU$ admits a $\mbq(\up)$-algebra anti-involution (i.e., anti-automorphism of order two):
\begin{equation}\label{tau}
\tau:\bfU\longrightarrow\bfU,\quad \sfE_h\longmapsto\sfF_h,\sfF_h\longmapsto\sfE_h,\sfK_i^{\pm1}\longmapsto\sfK_i^{\pm1}.
\end{equation}

The {\it quantum root vectors} $\sfE_{a,b}$,
for $a,b\in[1,m+n]$ with $|a-b|\geq 1$, are defined by recursively setting $\sfE_{h,h+1}=\sfE_h,\sfE_{h+1,h}=\sfF_h,$  and
\begin{equation}\label{qbrackets}
\sfE_{a,b}=\begin{cases}
\sfE_{a,c}\sfE_{c,b}-\up^{-1}_c\sfE_{c,b}\sfE_{a,c}, &\mbox{  if } a<b;\\
\sfE_{a,c}\sfE_{c,b}-\up_c\sfE_{c,b}\sfE_{a,c}, &\mbox{  if } a>b,\\
\end{cases}
\end{equation}
where $c$ can be taken to be an arbitrary index strictly between $a$ and $b$, and $\sfE_{a,b}$ is homogeneous of degree $\hat E_{a,b}:=\hat{a}+\hat{b}$ . We remarks that $\tau$ does not send the positive root vector to negative root vectors, i.e., $\tau(\sfE_{a,b})\neq \sfE_{b,a}$ for all $a+1<b$.

Let $\Pi=\{\al_h=\bse_h-\bse_{h+1}\mid 1\leq h\leq m+n\}$. Then $\bfU$ has a natural grading over $\mbz\Pi$:
\begin{equation}\label{grading}
\bfU=\bigoplus_{\nu\in\mbz\Pi}\bfU_\nu
\end{equation}
such that $\sfK_i\in\bfU_0,\sfE_h\in\bfU_{\al_i}$ and $\sfF_h\in\bfU_{-\al_i}.$ We will write $\gd(x)=\nu$ if $x\in\bfU_\nu$, called the {\it graded degree} of $x$.

Consider the subspace $\bsfA(m|n)$ of the $\mbq(\up)$-algebra
$$\bsS(m|n):=\prod_{r\geq0}\bsS(m|n,r)$$ spanned by the linear independent set
$$\{A(\bsj):=\sum_{r\geq0} A(\bsj,r)\mid A\in M(m|n)^\pm,\bsj\in\mathbb{Z}^{m+n}\}.$$
By \cite[Ths.~9.1\&9.4]{DG} (deduced from Proposition~\ref{DGMF}), $\bsfA(m|n)$ is a subalgebra isomorphic to $\bfU(\mathfrak{gl}_{m|n})$. Moreover, there is an algebra isomorphism given by
\begin{equation}\label{eta}
\eta:\bfU(\mathfrak{gl}_{m|n})\longrightarrow\bsfA(m|n);
\sfE_h\mapsto E_{h,h+1}(\mathbf{0}),\;
\sfF_h\mapsto E_{h+1,h}(\mathbf{0}),\;\sfK_i^{\pm1}\mapsto O(\pm\bse_i).
\end{equation}

Let $U_\sZ^+=U_\sZ^+(\mathfrak{gl}_{m|n})$ be the $\sZ$-subalgebra of $\bfU(\mathfrak{gl}_{m|n})$ generated by all divided powers $\sfE_h^{(l)}:=\frac{\sfE_h^l}{[l]_{\up_h}^!}$, where $l\geq 1$ for all $h\neq m$. We have the following realisation of $U_\sZ^+$.

\begin{theorem} \label{A(0) basis}The $\sZ$-submodule $\fA_\sZ^+$ spanned by
$$\mathscr B=\{A(\bfl)\mid A\in M(m|n)^+\}$$ is a subalgebra of $\bsfA(m|n)$ which is isomorphic to $U_\sZ^+$. In other words, we have $\eta(U_\sZ^+)=\fA_\sZ^+$.
\end{theorem}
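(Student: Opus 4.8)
The plan is to prove the theorem in two stages: first that $\mathscr B$ is closed under multiplication (so $\fA_\sZ^+$ is a subalgebra), and then that $\eta$ restricts to an isomorphism $U_\sZ^+\xrightarrow{\ \sim\ }\fA_\sZ^+$. For the closure, the key input is Lemma~\ref{pEA} together with Lemma~\ref{integral generators} and Proposition~\ref{integral basis multiplication}(2$^\pm$). These give, for any $A\in M(m|n)^+$, any $h\neq m$, and any $p\geq 1$, an explicit expansion of $(pE_{h,h+1})(\mathbf 0)A(\mathbf 0)$ as a $\sZ$-linear combination of elements $A^{[\nu]}(\mathbf 0)$ with $A^{[\nu]}\in M(m|n)^+$; the coefficients $\up_h^{f_h(\nu,A)}\prod_k\overline{\left[\!\!\left[{a_{h,k}+\nu_k\atop\nu_k}\right]\!\!\right]}_{\up_h^2}$ lie in $\sZ$ because the Gaussian binomials do. (Here I use that $(pE_{h,h+1})(\mathbf 0)=\eta(\sfE_h^{(p)})$ after matching $[p]^!_{\up_h}$ with $p!$ in $\up_h^2$-notation, exactly as in \cite[\S6]{DG}; likewise $(pE_{m,m+1})(\mathbf 0)=\eta(\sfE_m^{(p)})$, which vanishes for $p>1$ by Proposition~\ref{integral basis multiplication}(2$^\pm$).) Since every element of $\mathscr B$ is, by \cite[Th.~8.1]{DG} or the triangularity mentioned in the introduction, a $\sZ$-linear combination of monomials in the $(pE_{h,h+1})(\mathbf 0)$, an induction on $|A|$ (or on the number of factors in such a monomial) using these formulas shows that $\mathscr B\cdot\mathscr B\subseteq\fA_\sZ^+$, hence $\fA_\sZ^+$ is a $\sZ$-subalgebra of $\bsfA(m|n)$.

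Next I identify $\eta(U_\sZ^+)$ with $\fA_\sZ^+$. The inclusion $\eta(U_\sZ^+)\subseteq\fA_\sZ^+$ is immediate from the previous paragraph: $U_\sZ^+$ is generated by the $\sfE_h^{(l)}$, $\eta(\sfE_h^{(l)})=(lE_{h,h+1})(\mathbf 0)\in\mathscr B$ (for $h=m$ only $l=1$ survives, and $\eta(\sfE_m)=E_{m,m+1}(\mathbf 0)\in\mathscr B$), and $\fA_\sZ^+$ is closed under multiplication. For the reverse inclusion I must show every $A(\mathbf 0)$ with $A\in M(m|n)^+$ lies in $\eta(U_\sZ^+)$. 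This is where the monomial/PBW triangularity is used: by Theorem~\ref{PBW basis} the realisation basis $\{A(\mathbf0)\}$ is the PBW basis of \cite{Z}, and the triangular relation to the monomial basis from \cite[Th.~8.1]{DG} expresses $A(\mathbf0)$ as a $\sZ$-linear combination of ordered monomials in the divided powers $(pE_{h,h+1})(\mathbf0)=\eta(\sfE_h^{(p)})$, plus lower terms; inducting on the Bruhat/length order on $M(m|n)^+$ inverts this triangular system over $\sZ$ (the diagonal entries being units, in fact $1$), so $A(\mathbf0)\in\eta(U_\sZ^+)$. Combining the two inclusions gives $\eta(U_\sZ^+)=\fA_\sZ^+$; since $\eta$ is an algebra isomorphism $\bfU(\glmn)\to\bsfA(m|n)$ by \eqref{eta}, its restriction is an isomorphism $U_\sZ^+\cong\fA_\sZ^+$, and $\mathscr B$ is then a $\sZ$-basis since it is $\mbq(\up)$-linearly independent in $\bsfA(m|n)$.

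The main obstacle I expect is bookkeeping in the closure step: verifying that the iterated products of $(pE_{h,h+1})(\mathbf0)$'s stay inside the span of $\{A(\mathbf0): A\in M(m|n)^+\}$ and do not spill into matrices with nonzero entries below the diagonal or outside $M(m|n)^\pm$. The formulas in Lemma~\ref{pEA} only add to rows $h$ and $h+1$ and the surviving $\nu$ has zero first-$h$ entries (because $A$ is upper triangular), so each $A^{[\nu]}$ remains upper triangular with zero diagonal — but one must check this is genuinely preserved under iteration and for the $h=m$ case via Proposition~\ref{integral basis multiplication}(2$^+$). A secondary point requiring care is the precise matching of the $\sZ$-integral structures: that $\eta(\sfE_h^{(l)})$ really equals $(lE_{h,h+1})(\mathbf0)$ on the nose (not merely up to a unit), which is where one invokes the identification of $[l]^!_{\up_h}$ with $[\![l]\!]^!$-normalisations from \cite[\S6]{DG}.
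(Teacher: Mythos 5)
Your proposal is correct and follows essentially the same route as the paper: one inclusion ($\eta(U_\sZ^+)\subseteq\fA_\sZ^+$) via the integral multiplication formulas of Lemmas \ref{integral generators} and \ref{pEA}, and the reverse inclusion by inverting the $\sZ$-integral unitriangular relation \eqref{monomial} between the monomial basis and $\{A(\bfl)\}$. The appeal to Theorem \ref{PBW basis} in your second paragraph is unnecessary (the triangular relation alone suffices, as in the paper), but it introduces no circularity since that theorem is proved independently from the multiplication formulas.
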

\begin{proof} The proof is somewhat standard; see e.g.,\cite[Th.~9.1]{DG}. Let $\fA_1^+$ be the $\sZ$-subalgebra generated by $(lE_{h,h+1})(\bfl)$ for all $l>0$ and $h\in[1,m+n)$ ($l=1$ if $h=m$). Then, by Lemmas \ref{integral generators} and \ref{pEA}, $\fA_1^+\subseteq\fA_\sZ^+$.
Further, by Lemma \ref{pEA}, the triangular relation \cite[(9.1.1)]{DG} can be taken over $\sZ$ (see \eqref{monomial} below). In particular, we can use this relation to prove that
$\fA^+_\sZ\subseteq\fA_1^+$.
\end{proof}

{\it We will identify $\bfU(\mathfrak{gl}_{m|n})$ and $U_\sZ^+$ with $\bsfA(m|n)$ and $\mathfrak A_\sZ^+$, respectively, under $\eta$ in the sequel.}

We now  take a closer look at the triangular relation mentioned in the proof. The order relation involved in the triangular relation is the following relation: for $A=(a_{i,j}),A'=(a'_{i,j})\in M(m|n)$,
 \begin{equation}\label{prec}
 A'\preceq A\iff
\begin{cases}
(1)\quad \sum_{i\leq s,j\geq t}a'_{i,j}\leq \sum_{i\leq
s,j\geq t}a_{i,j},&\text{for all $s<t$};\\
(2)\quad \sum_{i\geq s,j\leq t}a'_{i,j}\leq \sum_{i\geq
s,j\leq t}a_{i,j},&\text{for all $s>t$.}\end{cases}
\end{equation}
Note that this definition is independent of the diagonal entries of a matrix. So $\preceq$ is not a partial order on $M(m|n)$. However, its restriction to $M(m|n)^\pm$ is a partial order. In particular, we have posets $(M(m|n)^+,\preceq)$ and $(M(m|n)^-,\preceq)$.

Moreover, the following is taken from
\cite[Lem.~3.6(1)]{BLM} (see also \cite[Lem.~13.20,13.21]{DDPW}): for $A,B\in M(m|n,r)$,
\begin{equation}\label{prec order}
A\leq B\,(\text{the Bruhat order})\implies A\preceq B.
\end{equation}
We may also introduce another partial order $\preceq_{\rc}$ on $M(m|n,r)$ defined by\footnote{This order relation is denoted by $\sqsubseteq$ in \cite{BLM}.}
\begin{equation}\label{prec rc}
X\preceq_{\rc} Y\iff \ro(X)=\ro(Y), \co(X)=\co(Y),\text{ and }X\preceq Y.
\end{equation}

\begin{remark} \label{Xi relative to order rc}
Since $X\leq Y\implies X\preceq_{\rc} Y\implies X\preceq Y$, the canonical bases $\{\Xi_A\mid A\in M(m|n,r)\}$ defined in Corollary \ref{basis Xi} can also be defined relative to the basis $\{[A]\}_A$, the bar involution and the order $\preceq_\rc$.
\end{remark}

For any $A=(a_{i,j})\in M(m|n)^\pm$ and $\bsj\in\mathbb{Z}^{m+n}$, we have the following triangular relation in the $\mbq(\up)$-algebra
$\bsfA(m|n)$
$$\prod^{(\leq_2)}_{i,h,j}(a_{j,i}E_{h+1,h})(\mathbf{0})\cdot \prod^{(\leq_1)}_{i,h,j}(a_{i,j}E_{h,h+1})(\mathbf{0})=A(\bfl)+\sum_{\substack{B\in M(m|n)^\pm,\bsj\in\mbz^{m+n}\\B\prec A}}g_{B,A,\bsj}B(\bsj),$$
where $i,h,j$ satisfy $1\leq i\leq h<j\leq m+n$ and the products follow the orders $\leq_i$ which are defined as in \cite[(13.7.1)]{DDPW}. In particular, by Lemma \ref{pEA}, a single product for $A\in M(m|n)^+$ can be simplified as
\begin{equation}\label{monomial}
{\fm}^{+}_A:= \prod^{(\leq_1)}_{1\leq i\leq h<j\leq m+n}(a_{i,j}E_{h,h+1})(\mathbf{0})=A({\bf0})+\sum_{B\in M(m|n)^+,B\prec A}g_{B,A}B({\bf0}),
\end{equation}
where $g_{B,A}\in\sZ$. Then, applying the anti-involution $\tau$ in \eqref{tau} yields
\begin{equation}\label{mon-}
{\fm}^{-}_{A^t}:=\tau({\fm}^{+}_A)=\prod^{(\leq_1^{\text{op}})}_{i,h,j}(a_{i,j}E_{h+1,h})(\mathbf{0}).
\end{equation}

\begin{corollary}\label{monomial basis}
The set $\{\fm^{+}_A\mid A\in M(m|n)^+\}$ (resp., $\{\fm^{-}_A\mid A\in M(m|n)^-\}$) forms a $\sZ$-basis, {\sf a monomial basis}, for $U^+_\sZ$ (resp., $U^-_\sZ$).
\end{corollary}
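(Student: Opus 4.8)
The plan is to deduce Corollary~\ref{monomial basis} directly from the triangular relation~\eqref{monomial} together with the realisation Theorem~\ref{A(0) basis}. First I would recall that, by Theorem~\ref{A(0) basis}, the set $\mathscr B=\{A(\mathbf{0})\mid A\in M(m|n)^+\}$ is a $\sZ$-basis of $U^+_\sZ=\fA^+_\sZ$; this is the ``PBW-type'' basis indexed by the poset $(M(m|n)^+,\preceq)$. The relation~\eqref{monomial} asserts that each monomial $\fm^+_A$, which lies in $U^+_\sZ$ since it is a product of divided powers $(a_{i,j}E_{h,h+1})(\mathbf 0)$ (each of which is $\eta$ of an appropriate $\sfE_h^{(l)}$, as noted in the proof of Theorem~\ref{A(0) basis}), expands as $A(\mathbf 0)$ plus a $\sZ$-linear combination of $B(\mathbf 0)$ with $B\prec A$ and $B\in M(m|n)^+$.

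The key step is then a standard unitriangularity argument. Since $\preceq$ restricts to a partial order on $M(m|n)^+$, and since $M(m|n)^+$ is the disjoint union over $\nu\in\mbn\Pi$ of the finite ``level sets'' $\{A\in M(m|n)^+\mid \gd(A(\mathbf 0))=\nu\}$ on each of which $\preceq$ is a partial order with finitely many elements, the change-of-basis matrix $(g_{B,A})_{B,A\in M(m|n)^+}$ expressing $(\fm^+_A)$ in terms of $(A(\mathbf 0))$ is block-diagonal with respect to the grading and, within each block, unitriangular with respect to any linear extension of $\preceq$. A unitriangular matrix over $\sZ$ is invertible over $\sZ$, so $\{\fm^+_A\mid A\in M(m|n)^+\}$ is also a $\sZ$-basis of $U^+_\sZ$. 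One should remark that $\fm^+_A$ indeed depends only on $A$ (the order $\leq_1$ used in the product is fixed once and for all by \cite[(13.7.1)]{DDPW}), so the indexing by $M(m|n)^+$ is well defined.

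For the negative part, I would apply the anti-involution $\tau$ of~\eqref{tau}. Since $\tau(\sfE_h^{(l)})=\sfF_h^{(l)}$, we have $\tau(U^+_\sZ)=U^-_\sZ$, and $\tau$ carries the $\sZ$-basis $\{\fm^+_A\mid A\in M(m|n)^+\}$ to $\{\tau(\fm^+_A)\mid A\in M(m|n)^+\}=\{\fm^-_{A^t}\mid A\in M(m|n)^+\}$ by the definition~\eqref{mon-}; as $A\mapsto A^t$ is a bijection $M(m|n)^+\to M(m|n)^-$, the latter set is exactly $\{\fm^-_A\mid A\in M(m|n)^-\}$, hence a $\sZ$-basis of $U^-_\sZ$. (One may alternatively observe directly that transposing~\eqref{monomial} and using that $\preceq$ is preserved under transpose gives the corresponding triangular relation $\fm^-_{A}=A(\mathbf 0)+\sum_{B\prec A,\,B\in M(m|n)^-}g_{B^t,A^t}\,B(\mathbf 0)$ in $U^-_\sZ$, and rerun the unitriangularity argument.) The only mild subtlety, and the place where a little care is needed, is the grading/finiteness bookkeeping that makes ``unitriangular'' meaningful over the infinite index set $M(m|n)^+$; everything else is formal once Theorem~\ref{A(0) basis} and~\eqref{monomial} are in hand.
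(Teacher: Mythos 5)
Your proposal is correct and follows exactly the route the paper intends: the corollary is stated without proof precisely because it is the immediate unitriangular change-of-basis consequence of the relation \eqref{monomial} together with Theorem \ref{A(0) basis}, with the negative part obtained by applying $\tau$ as in \eqref{mon-}. Your extra care about the grading/finiteness needed to invert the unitriangular matrix, and about transposition giving a bijection $M(m|n)^+\to M(m|n)^-$, is a reasonable filling-in of details the paper leaves implicit.
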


We end this section by showing that the basis $\mathscr B=\{A(\bfl)\mid A\in M(m|n)^+\}$ identifies a PBW basis for $U^+_\sZ$.

For a root vector $\sfE_{k,l}$ with $k<l$ and $p>0$, if $\sfE_{k,l}^{p}\neq 0$  define the usual divided powers  $\sfE_{k,l}^{(p)}=\frac{\sfE_{k,l}^p}{[p]_{\up_k}^!}$. If we order linearly the set
$$\mathcal{J}'=\{(i,j)\mid 1\leq i<j\leq m+n\}$$ by setting,
for $(i,j),(i',j')\in \mathcal{J}'$, $(i,j)<_3 (i',j')$ if and only if
$j> j'$ or $j=j',i>i' $, and use the order to define, for any $A=(a_{i,j})\in M(m|n)^+$, the product and its `transpose'
\begin{equation}\label{E_A}
\sfE_{A}=\prod_{(i,j)\in \mathcal{J}'}^{(\leq_3)}\sfE_{i,j}^{(a_{i,j})}\;\text{ and }\;\sfF_{A^t}=\tau(\sfE_A).
\end{equation}
then the set $\{\sfE_{A}\mid A\in M(m|n)^+\}$ (resp. $\{\sfF_{A}\mid A\in M(m|n)^-\}$) forms a  PBW basis of $U_\sZ^+$ (resp., $U_\sZ^-$). We now prove that this basis is nothing but the same basis given in Theorem \ref{A(0) basis}.

For any $A\in M(m|n)$, set $$\|A\|=\sum_{1\leq i<j\leq m+n}\frac{(j-i)(j-i+1)}{2}(a_{i,j}+a_{j,i}).$$
Refer to \cite[Lem. 13.21]{DDPW}, for $A,B\in M(m|n)$,
$$B\prec A\implies \|B\|<\|A\| \;(\mbox{and } B\preceq A\implies \|B\|\leq \|A\|).$$

\begin{theorem}\label{PBW basis}
 For any $A\in M(m|n)^+$, we have
 $\sfE_{A}=A({\bf0})$. In other words, with the isomorphism $\eta$ given in \eqref{eta}, we have $\eta(\sfE_A)=A(\bfl).$
\end{theorem}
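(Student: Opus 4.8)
The plan is to prove $\sfE_A = A(\bfl)$ for all $A \in M(m|n)^+$ by induction on $\|A\|$, using the monomial basis ${\fm}^+_A$ as the bridge between the PBW-type product $\sfE_A$ and the realisation element $A(\bfl)$. The key idea is that both $\{\sfE_A\}$ and $\{A(\bfl)\}$ are related to the monomial basis $\{{\fm}^+_A\}$ by a $\preceq$-triangular transformation with identical leading coefficients, and so by uniqueness they must coincide.

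First I would establish a triangular relation expressing $\sfE_A$ in terms of the $A(\bfl)$'s. The point is that $\sfE_A$ is built from the root vectors $\sfE_{i,j}^{(a_{i,j})}$, and each root vector $\sfE_{i,j}$ (for $j-i\ge 2$) is a $\up$-bracket of root vectors with smaller $j-i$, so an easy induction on $j-i$ using \eqref{qbrackets} shows $\sfE_{i,j} = E_{i,j}(\bfl) + (\text{lower }\prec\text{-terms})$ inside $\bsfA(m|n)$; here one must check that the higher-order multiplication formulas of Lemma \ref{pEA} (and its $\preceq$-triangularity recorded in \eqref{monomial}) control the error terms. Then, multiplying these together in the prescribed order $\leq_3$ and comparing with the product defining ${\fm}^+_A$ in \eqref{monomial}, one sees the ordered product $\sfE_A$ and the ordered product ${\fm}^+_A = \prod^{(\leq_1)}(a_{i,j}E_{h,h+1})(\bfl)$ agree modulo a $\preceq$-triangular error, because $(a_{i,j}E_{h,h+1})(\bfl)$ is, up to $\prec$-lower terms, the "distance-one" piece that assembles into $E_{i,j}(\bfl)$; both are monomials in the generators $E_{h,h+1}(\bfl)$ with the same total graded degree $\gd$. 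Crucially, both $\sfE_A$ and ${\fm}^+_A$ have leading term $A(\bfl)$ with coefficient $1$: for ${\fm}^+_A$ this is exactly \eqref{monomial}, and for $\sfE_A$ it follows because the highest $\|{-}\|$-term in the product of the $E_{i,j}(\bfl)$ is obtained by "adding up the off-diagonal positions without interaction," which yields precisely $A$.

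Hence both $\sfE_A$ and $A(\bfl)$ lie in $A(\bfl) + \sum_{B \prec A} \sZ\, B(\bfl)$ (for $A(\bfl)$ this is trivial), and both equal $A(\bfl)$ plus a $\preceq$-triangular correction from the monomial basis; more precisely, writing $\sfE_A = \sum_{B \preceq A} c_{B,A} B(\bfl)$ with $c_{A,A}=1$, I want to show $c_{B,A}=0$ for $B \prec A$. This I would do by downward induction on $\|A\|$ combined with the fact, quoted before the theorem from \cite[Lem.~13.21]{DDPW}, that $B \prec A \implies \|B\| < \|A\|$: I would invert the monomial relation \eqref{monomial} to express $A(\bfl)$ as ${\fm}^+_A$ minus $\prec$-lower ${\fm}^+_B$'s, and simultaneously express $\sfE_A$ in terms of the $\sfE_B$'s using the same combinatorics, so that the inductive hypothesis $\sfE_B = B(\bfl)$ for all $B$ with $\|B\| < \|A\|$ (equivalently all $B \prec A$) forces the two expansions to match. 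The base case $A = 0$ is $\sfE_0 = 1 = 0(\bfl)$.

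The main obstacle I anticipate is the bookkeeping in the first step: showing that the specific PBW ordering $\leq_3$ on $\mathcal{J}'$ and the specific ordering $\leq_1$ used in \eqref{monomial} are compatible enough that the two ordered products $\sfE_A$ and ${\fm}^+_A$ have the same leading term and a genuinely $\preceq$-triangular difference. One has to carefully track how the iterated $\up$-brackets defining $\sfE_{i,j}$ via \eqref{qbrackets} expand when pushed into $\bsfA(m|n)$, use Lemma \ref{pEA} repeatedly to rewrite products $(pE_{h,h+1})(\bfl)\cdot A(\bfl)$, and verify that no cancellation destroys the coefficient $1$ on $A(\bfl)$ and that every stray term is strictly $\prec A$. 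Once that compatibility is in hand, the uniqueness/triangularity argument closing the proof is routine, and it also immediately gives $\eta(\sfE_A) = A(\bfl)$ since $\eta$ is the identification under which $\bfU(\glmn)$ is $\bsfA(m|n)$.
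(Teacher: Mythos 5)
Your induction on $\|A\|$ and your use of the triangular relation \eqref{monomial} are the right scaffolding, but the core of your argument has a genuine gap: you reduce the theorem to showing that in the expansion $\sfE_A=\sum_{B\preceq A}c_{B,A}B(\bfl)$ with $c_{A,A}=1$ one has $c_{B,A}=0$ for all $B\prec A$, and then you appeal to ``uniqueness'' of a $\preceq$-triangular transformation with leading coefficient $1$. No such uniqueness exists. The monomial ${\fm}^+_A$ itself is of the form $A(\bfl)+\sum_{B\prec A}g_{B,A}B(\bfl)$ with $g_{B,A}$ generally nonzero, so being unitriangular against $\{B(\bfl)\}$ does not pin down an element; the uniqueness arguments in this circle of ideas (e.g.\ Theorem \ref{canonical basis}) additionally require bar-invariance and coefficients in $\up^{-1}\mbz[\up^{-1}]$, neither of which $\sfE_A$ or $A(\bfl)$ satisfies. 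Your proposed mechanism for killing the $c_{B,A}$ --- ``express $\sfE_A$ in terms of the $\sfE_B$'s using the same combinatorics'' and match expansions --- is circular: determining that expansion is exactly the content of the theorem.

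What actually makes the lower-order terms vanish is the $\up$-commutator structure of the root vectors, and the paper's proof is an explicit verification of the resulting cancellation. One peels off the first factor, $\sfE_A=\frac{1}{[a_{h,l}]_{\up_h}}\sfE_{h,l}\sfE_{A-E_{h,l}}$, writes $\sfE_{h,l}=\sfE_{h,h+1}\sfE_{h+1,l}-\up_{h+1}^{-1}\sfE_{h+1,l}\sfE_{h,h+1}$ via \eqref{qbrackets}, and applies the degree-one multiplication formula of Lemma \ref{integral generators} (not the higher-order Lemma \ref{pEA}) together with the inductive hypothesis to both ordered products. Each product separately produces a sum over indices $j<l$ of strictly $\prec A$ terms, but the two sums cancel \emph{exactly} against each other because of the precise coefficient $\up_{h+1}^{-1}$ and the identity $f(h,j)=f_{A-E_{h,l}}(h,j)-(-1)^{\delta_{h,m}}$, leaving only $[a_{h,l}]_{\up_h}A(\bfl)$. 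Without carrying out (or otherwise justifying) this cancellation, your proof does not close; I would either redo the computation along these lines or find a genuinely different characterization of $A(\bfl)$ that $\sfE_A$ can be checked against.
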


\begin{proof} Let $A=(a_{i,j})$.
We apply induction on $\|A\|$ to prove the assertion. If $\|A\|=1$, then $A$ must be of the form $E_{i,i+1}$ for some
$1\leq i<m+n$. Thus, this case is clear from the definition of $\eta$. So
$\sfE_{i,i+1}=(E_{i,i+1})({\bf0}),$ as desired.

Assume now $\|A\|>1$ and that, for any $B\in M(m|n)^+$ with $\|B\|<\|A\|$, $\sfE_{B}=B({\bf0})$.
Consider the entries of $A$ and choose $1\leq h<l\leq m+n$ such that $a_{h,l}>0$ and $a_{i,j}=0$ for all $j>l$ or $i>h$ whenever $j=l$. In other words, $\sfE_{h,l}^{(a_{h,l})}$ is the first factor in the product $\sfE_A$. Then, by the definition,
$$\sfE_{A}=\frac{1}{[a_{h,l}]_{\up_h}}\sfE_{h,l}\sfE_{A-E_{h,l}}.$$
Since $A-E_{h,l}\prec A$, $\|A-E_{h,l}\|<\|A\|$. By induction, we have $\sfE_{A-E_{h,l}}=(A-E_{h,l})({\bf0})$ and also $\sfE_{h,l}=E_{h,l}(\bfl)$.
There are two cases to consider.

{\bf Case 1: $l=h+1$.} For this case, we directly use the multiplication formula given in Lemma \ref{integral generators}. By the selection of indices $h,l$, all $a_{h+1,j}=0=a_{h,j}$ if $j>h+1=l$. Thus, by \eqref{sigmaf}, $f_{A-E_{h,h+1}}(h,h+1)=a_{h,h+1}-1$. So
\begin{equation*}
\sfE_{h,h+1}\sfE_{A-E_{h,h+1}}=(E_{h,h+1})({\bf0})(A-E_{h,h+1})({\bf0})=\up^{a_{h,h+1}-1}_h\overline{[\![a_{h,h+1}]\!]}_{\up_h}A({\bf0}).
\end{equation*}
But then
$\up^{a_{h,h+1}-1}_h\overline{[\![a_{h,h+1}]\!]}_{\up_h}=[a_{h,h+1}]_{\up_h}$. Hence,
$\sfE_{A}=A({\bf0})$, as desired.

{\bf Case 2: $l>h+1$.} In this case, write $\sfE_{h,l}=\sfE_{h,h+1}\sfE_{h+1,l}-\up^{-1}_{h+1}\sfE_{h+1,l}\sfE_{h,h+1}$.
Since $A-E_{h,l}+E_{h+1,l}\prec A$, by induction,
\begin{equation}\label{EEA0}
\sfE_{h+1,l}\sfE_{A-E_{h,l}}=\sfE_{A-E_{h,l}+E_{h+1,l}}=(A-E_{h,l}+E_{h+1,l})({\bf0})
\end{equation}
and, on the other hand, 
\begin{equation}\label{EEA2}
\begin{split}
\sfE_{h,h+1}&\sfE_{A-E_{h,l}}=(E_{h,h+1})({\bf0})(A-E_{h,l})({\bf0})\\
&=(-1)^{\sigma_{A-E_{h,l}}(h+1)\delta_{h,m}}\up_h^{f_{A-E_{h,l}}(h,h+1)}\overline{[\![a_{h,h+1}+1]\!]}_{\up_h}(A-E_{h,l}+E_{h,h+1})({\bf0})\\
&+\sum_{a_{h+1,j}\geq 1}(-1)^{\sigma_{A-E_{h,l}}(j)\delta_{h,m}}\up_h^{f_{A-E_{h,l}}(h,j)}\overline{[\![a_{h,j}+1]\!]}_{\up_h}(A-E_{h,l}+E_{h,j}-E_{h+1,j})({\bf0}).
\end{split}
\end{equation}

Now multiplying \eqref{EEA0} by $\sfE_{h,h+1}$ and applying Lemma \ref{integral generators} yields
\begin{equation}\label{EEA1}
\begin{aligned}
\sfE_{h,h+1}&\sfE_{h+1,l}\sfE_{A-E_{h,l}}=(E_{h,h+1})({\bf0})(A-E_{h,l}+E_{h+1,l})({\bf0})\\
&=(-1)^{\sigma(h+1)\delta_{h,m}}\up_{h}^{f(h,h+1)}\overline{[\![a_{h,h+1}+1]\!]}_{\up_h}(A-E_{h,l}+E_{h+1,l}+E_{h,h+1})({\bf0})\\
&+\sum_{a_{h+1,j}\geq 1,j\neq l}(-1)^{\sigma(j)\delta_{h,m}}\up_h^{f(h,j)}\overline{[\![a_{h,j}+1]\!]}_{\up_h}(A-E_{h,l}+E_{h+1,l}+E_{h,j}-E_{h+1,j})({\bf0})\\
&+\up_h^{a_{h,l}-1}\overline{[\![a_{h,l}]\!]}_{\up_h}A({\bf0}),\\
\end{aligned}
\end{equation}
where $\sigma(k)=\sigma_{A-E_{h,l}+E_{h+1,l}}(k)=\sigma_A(k)-\delta_{h,m}$ and $f(h,k)=f_{A-E_{h,l}+E_{h+1,l}}(h,k)$.

We now compute multiplying \eqref{EEA2} by $\sfE_{h+1,l}$.
Note that, since $a_{h+1,j}=0$ for all $j\geq l$, the summation in \eqref{EEA2} is taken over all $j$ with $a_{h+1,j}\geq 1$ and $j<l$.
But $j<l$ implies
$$A-E_{h,l}+E_{h,h+1}\prec A\text{ and }A-E_{h,l}+E_{h,j}-E_{h+1,j}\prec A.$$
Thus, by induction,
$$\aligned
\sfE_{A-E_{h,l}+E_{h,h+1}}&=(A-E_{h,l}+E_{h,h+1})({\bf0})\\
\sfE_{A-E_{h,l}+E_{h,j}-E_{h+1,j}}&=(A-E_{h,l}+E_{h,j}-E_{h+1,j})({\bf0}).\endaligned
$$
Hence, by induction again,
$$\aligned
\sfE_{h+1,l}\sfE_{A-E_{h,l}+E_{h,h+1}}&=\sfE_{A-E_{h,l}+E_{h,h+1}+E_{h+1,l}}=(A-E_{h,l}+E_{h,h+1}+E_{h+1,l})({\bf0})\\
\sfE_{h+1,l}\sfE_{A-E_{h,l}+E_{h,j}-E_{h+1,j}}&=\sfE_{A-E_{h,l}+E_{h,j}-E_{h+1,j}+E_{h+1,l}}\\
&=(A-E_{h,l}+E_{h,j}-E_{h+1,j}+E_{h+1,l})({\bf0}),\\
\endaligned$$
since $A-E_{h,l}+E_{h,h+1}+E_{h+1,l}\prec A$ and $A-E_{h,l}+E_{h,j}-E_{h+1,j}+E_{h+1,l}\prec A.$

Thus, for $\vep=\delta_{h,m}$,
\begin{equation}\label{EEA3}
\begin{aligned}
&\sfE_{h+1,l}\sfE_{h,h+1}\sfE_{A-E_{h,l}}=\sfE_{h+1,l}\cdot(\text{RHS of \eqref{EEA2}})\\
&=(-1)^{\vep\sigma_{A-E_{h,l}}(h+1)}\up_h^{f_{A-E_{h,l}}(h,h+1)}\overline{[\![a_{h,h+1}+1]\!]}_{\up_h}(A-E_{h,l}+E_{h,h+1}+E_{h+1,l})({\bf0})\\
&+\sum_{a_{h+1,j}\geq 1}(-1)^{\vep(\sigma_{A-E_{h,l}}(j))}\up_h^{f_{A-E_{h,l}}(h,j)}\overline{[\![a_{h,j}+1]\!]}_{\up_h}(A-E_{h,l}+E_{h,j}-E_{h+1,j}+E_{h+1,l})({\bf0}).
\end{aligned}
\end{equation}
Finally, since $$f(h,j)=f_{A-E_{h,l}}(h,j)-(-1)^\vep,$$ and $\sigma(j)-\vep=\sigma_{A-E_{h,l}}(j)$ for $h=m$,
Combining \eqref{EEA1} and \eqref{EEA3} gives
\begin{equation*}
\sfE_{h,l}\sfE_{A-E_{h,l}}=\sfE_{h,h+1}\sfE_{h+1,l}\sfE_{A-E_{h,l}}-\up^{-1}_{h+1}\sfE_{h+1,l}\sfE_{h,h+1}\sfE_{A-E_{h,l}}
=[a_{h,l}]_{\up_h}A({\bf0}),
\end{equation*}
proving $\sfE_{A}=A({\bf0})$.
\end{proof}

\section{Canonical basis for $U_\sZ^\pm(\mathfrak{gl}_{m|n})$}
We are now ready to introduce the canonical basis for $U^\pm_\sZ$ via the PBW basis described in \eqref{E_A} and the partial order $\preceq$ used in the triangular relation \eqref{monomial}. We need another ingredient---a bar involution.

By Definition \ref{sgroup}, we may define the bar involution 
\begin{equation}\label{bar on U}
\bar{\ }:\bfU(\mathfrak{gl}_{m|n})\rightarrow\bfU(\mathfrak{gl}_{m|n})\text{ with }\bar\up=\up^{-1},\bar\sfE_h=\sfE_h,\bar\sfF_h=\sfF_h, \bar\sfK_i^\pm=\sfK_i^\mp.
\end{equation}

\begin{remark} \label{bar compatibility}
(1) If we denote $\flat$ to be the involution on the direct product $\bsS(m|n)=\prod_{r\geq0}\bsS(m|n,r)$ defined by baring on every component (see \eqref{bar on S}), then the restriction of $\flat$ to
$\bsfA(m|n)=\bfU(\mathfrak{gl}_{m|n})$ coincides with the bar involution on $\bfU(\mathfrak{gl}_{m|n})$. This can be seen as follows.

If $A=\diag(\lambda)$ or $\diag(\la)+E_{h,h+1}$, then $A$ is minimal under the Bruhat ordering. Thus, \eqref{bar on S} implies $\overline{[A]}=[A]$. Since $E_{h,h+1}({\bf0},r)=\sum_{\lambda\in \Lambda(m|n,r-1)}[{E_{h,h+1}+\diag(\la)}]$ and $O(\bse_i,r)=\sum_{\la\in\La(m|n,r)}\up_i^{\la_i}[\diag(\la)]$, it follows that $\overline{E_{h,h+1}({\bf0},r)}=E_{h,h+1}({\bf0},r)$ and $\overline{O(\bse_i,r)}=O(-\bse_i,r)$.
Similarly, $\overline{E_{h+1,h}({\bf0},r)}=E_{h+1,h}({\bf0},r)$. Hence, $\flat(E_{h,h+1}({\bf0}))=E_{h,h+1}({\bf0})$, $\flat(O(\bse_i))=O(-\bse_i)$, and $\flat(E_{h+1,h}({\bf0}))=E_{h+1,h}({\bf0})$. This is the same bar involution as defined in \eqref{bar on U}.

(2) Matrix transposing induces $\mbq(\up)$-algebra anti-involutions
$$\tau_r:\bsS(m|n,r)\longrightarrow \bsS(m|n,r),\quad \xi_A\longmapsto\xi_{A^t},$$
which induce anti-involution $\tau$ on $\bsS(m|n)$ and, hence, on $\bsfA(m|n)=\bfU(\mathfrak{gl}_{m|n})$. This is the same $\tau$ as defined in \eqref{tau}. Hence, $\tau(A(\bfl))=A^t(\bfl)$ for all $A\in M(m|n)^+$. In particular, by \eqref{E_A}, we have $\sfF_A=A(\bfl)$ for all $A\in M(m|n)^-$.

(3) Let $\iota=\flat\circ\tau=\tau\circ\flat$. Then $\iota$ is a ring anti-involution
\begin{equation}\label{iota}
\iota:\bfU\longrightarrow\bfU,\quad \sfE_h\longmapsto\sfF_h,\sfF_h\longmapsto\sfE_h,\sfK_i^{\pm1}\longmapsto\sfK_i^{\mp1},\up^{\pm1}\longmapsto\up^{\mp1}.
\end{equation}
Note that $\iota(\sfE_{a,b})=\sfE_{b,a}$ for all $a\neq b$ (cf. the remark after \eqref{qbrackets}).
\end{remark}

\begin{theorem}\label{canonical basis}The basis $\{A(\bfl)\}_{A\in M(m|n)^+}$, the bar involution, and the partial order $\preceq$ define uniquely the canonical $\mathscr{C}^+=\{\cC_A\mid A\in M(m|n)^+\}$ for $U^+_\sZ$. In other words, the elements $\cC_A$ are uniquely defined by the conditions $\bar \cC_A=\cC_A$ and
$$\cC_A-A(\bfl)\in\sum_{B\in M(m|n)^+,B\prec A}\up^{-1}\mbz[\up^{-1}]B(\bfl).$$

Applying the anti-involution $\tau$ yields the canonical basis
\begin{equation}\label{C-}
\mathscr C^-=\{\cC_A=\tau(\cC_{A^t})\mid A\in M(m|n)^-\}
\end{equation}
 of $U^-_\sZ$ which can be defined similarly relative to $\{A(\bfl)\}_{A\in M(m|n)^-}$ etc.
\end{theorem}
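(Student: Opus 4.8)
The plan is to construct the canonical basis $\mathscr C^+$ by the standard Lusztig-type argument, using the triangular relation \eqref{monomial} together with the bar involution \eqref{bar on U} to reduce the existence and uniqueness to a lemma of the Kazhdan–Lusztig type. The essential combinatorial input is already available: the monomial basis $\fm^+_A$ of Corollary \ref{monomial basis} and the PBW/realisation basis $\{A(\bfl)\}_{A\in M(m|n)^+}$ (identified in Theorem \ref{PBW basis}) are related by the $\preceq$-triangular change of basis $\fm^+_A=A(\bfl)+\sum_{B\prec A}g_{B,A}B(\bfl)$ with $g_{B,A}\in\sZ$, and by Remark \ref{bar compatibility}(1) the bar involution on $U^+_\sZ$ is the restriction of $\flat$, so it is a ring automorphism fixing each $\sfE_h$, hence fixing each divided power $\sfE_h^{(l)}$ and therefore fixing every monomial $\fm^+_A$ (which is by definition a product of such divided powers). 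Thus $\overline{\fm^+_A}=\fm^+_A$, and combining with the triangular relation gives $\overline{A(\bfl)}=A(\bfl)+\sum_{B\prec A}h_{B,A}B(\bfl)$ for suitable $h_{B,A}\in\sZ$ (obtained by inverting the $\preceq$-unitriangular matrix $(g_{B,A})$ and using $\bar g_{B,A}$, both of which preserve $\preceq$-triangularity since $\preceq$ is a genuine partial order on $M(m|n)^+$).

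The key steps, in order, are as follows. First I would record that $\bar{\ }$ restricts to an involution on $U^+_\sZ$, is $\sZ$-semilinear ($\bar\up=\up^{-1}$), fixes the monomial basis, and hence (via \eqref{monomial}) acts $\preceq$-unitriangularly on the realisation basis $\{A(\bfl)\}$; in particular the "bar coefficients" $h_{B,A}$ satisfy the cocycle-type identity forced by $\bar{\ }^2=\id$. Second, since $(M(m|n)^+,\preceq)$ is a partial order (as noted after \eqref{prec}) in which the intervals $\{B\mid B\preceq A\}$ are finite — finiteness following from $\|B\|\le\|A\|$ together with the fact that $\preceq$ fixes row and column sums implicitly through $\|\cdot\|$, or more directly because $B\preceq A$ bounds all partial sums of $B$ — one invokes the standard Lusztig lemma: given a bar-involution acting unitriangularly with respect to a finite-interval poset, there is a unique bar-invariant basis $\cC_A$ with $\cC_A-A(\bfl)\in\sum_{B\prec A}\up^{-1}\mbz[\up^{-1}]B(\bfl)$, constructed by descending induction on $\|A\|$ and solving $p_{B,A}-\overline{p_{B,A}}=$ (known lower terms) in $\up^{-1}\mbz[\up^{-1}]$. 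Third, for the negative part one simply applies the $\mbq(\up)$-algebra anti-involution $\tau$ of \eqref{tau}: by Remark \ref{bar compatibility}(2), $\tau(A(\bfl))=A^t(\bfl)$, so $\tau$ carries $\{A(\bfl)\}_{A\in M(m|n)^+}$ to $\{A(\bfl)\}_{A\in M(m|n)^-}$; since $\tau$ reverses products it sends the $\preceq$-triangular monomial relation for $U^+_\sZ$ to the corresponding one for $U^-_\sZ$ (here one uses that $A\preceq A'\iff A^t\preceq (A')^t$, immediate from the definition \eqref{prec} by swapping conditions (1) and (2)), and $\tau$ commutes with bar up to the compatible semilinearity, so $\cC_A:=\tau(\cC_{A^t})$ is bar-invariant and satisfies the analogous triangularity, giving the canonical basis $\mathscr C^-$.

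The main obstacle is verifying the compatibility of $\tau$ with the bar involution at the level needed: $\tau$ is $\mbq(\up)$-linear (it fixes $\up$) whereas $\bar{\ }$ is semilinear, so $\tau$ does \emph{not} commute with $\bar{\ }$ on the nose; rather $\iota=\flat\circ\tau$ of \eqref{iota} is the semilinear anti-involution, and one must check that applying $\tau$ to the defining conditions of $\cC_{A^t}$ indeed yields the defining conditions of $\cC_A$ — concretely, that $\overline{\tau(\cC_{A^t})}=\tau(\cC_{A^t})$. This follows because $\overline{\tau(x)}=\tau(\overline{x})$ as operators (both $\flat$ and $\tau$ are defined componentwise/structurally on $\bsS(m|n)$ and visibly commute, as is implicit in the definition $\iota=\flat\circ\tau=\tau\circ\flat$ in Remark \ref{bar compatibility}(3)), so $\overline{\cC_{A^t}}=\cC_{A^t}$ gives $\overline{\tau(\cC_{A^t})}=\tau(\overline{\cC_{A^t}})=\tau(\cC_{A^t})$; and the triangularity is preserved because $\tau$ is $\sZ$-linear on coefficients, so it sends $\up^{-1}\mbz[\up^{-1}]B(\bfl)$ to $\up^{-1}\mbz[\up^{-1}]B^t(\bfl)$ with $B^t\prec A$ precisely when $B\prec A^t$. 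Once this bookkeeping is in place, uniqueness on the negative side follows from uniqueness on the positive side via the bijectivity of $\tau$, completing the proof.
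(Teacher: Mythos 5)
Your proposal is correct and follows essentially the same route as the paper: the $\preceq$-triangular relation \eqref{monomial} between the bar-fixed monomial basis $\{\fm^+_A\}$ and $\{A(\bfl)\}$ gives the unitriangular action of the bar involution, the standard Lusztig-type lemma then yields existence and uniqueness of $\mathscr C^+$, and the negative part is obtained by applying $\tau$ exactly as in \eqref{mon-} and Remark \ref{bar compatibility}(2). Your extra care about $\tau$ commuting with $\flat$ and about finiteness of the intervals $\{B\mid B\preceq A\}$ only makes explicit what the paper leaves implicit.
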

\begin{proof} By \eqref{monomial}, we may write the basis $\{A(\bfl)\}_{A\in M(m|n)^+}$ in terms of the monomial basis:
$$A({\bf0})=\fm_A^++\sum_{B\in M(m|n)^+,B\prec A}h_{B,A}\fm_B^+,$$
where $h_{B,A}\in \sZ$. Applying the bar involution and \eqref{monomial} yields
$$\aligned
\overline{A(\bfl)}&=\fm_A^++\sum_{B\in M(m|n)^+,B\prec A}\overline{h_{B,A}}\,\fm_B^+\\
&=A(\bfl)+\sum_{A'\in M(m|n)^+,A'\prec A}f_{A',A}\,A'(\bfl)\quad(f_{A',A}\in\sZ).
\endaligned$$
Thus, a standard construction (see, e.g., \cite[\S0.5]{DDPW}) shows that there exist polynomials $p_{B,A}$ with $p_{A,A}=1$ and $p_{B,A}\in\up^{-1}\mbz[\up^{-1}]$ if $B\prec A$ such that  the elements
\begin{equation}\label{C_A}
\cC_A=A({\bf0})+\sum_{B\in M(m|n)^+,B\prec A}p_{B,A}B({\bf0})\quad (A\in M(m|n)^+)
\end{equation}
form the required basis. For the last assertion, see \eqref{mon-} and Remark \ref{bar compatibility}(2).\end{proof}

Recall from \cite[Cor.6.4]{DG} that there are $\mathbb Q(\up)$-superalgebra epimorphisms
\begin{equation}
\eta_r:\bfU(\mathfrak{gl}_{m|n})\longrightarrow \bsS(m|n,r)
\end{equation}
sending $\sfE_h,\sfF_h$ and $\sfK^{\pm1}$ to $E_{h,h+1}({\bf0},r),E_{h+1,h}({\bf0},r)$ and $O(\pm\bse_i,r)$, respectively.
Note that it was these epimorphisms that induce the isomorphism $\eta$ in \eqref{eta}.
Note also that the epimorphism $\eta_r$  is compatible with the bar involutions by the remark above.

Let $ \sS^-(m|n,r)=\eta_r({U_\sZ^-})$ and $ \sS^+(m|n,r)=\eta_r({U_\sZ^+})$. The subalgebra $ \sS^-(m|n,r)$ (resp. $ \sS^+(m|n,r)$) has a basis $$\mathscr B^-_r=\{A({\bf0},r)\mid A\in M(m|n)^-_{\leq r}\} \quad(\mbox{resp. }\mathscr B^+_r= \{A({\bf0},r)\mid A\in M(m|n)^+_{\leq r}\}),$$
where $M(m|n)^*_{\leq r}=\{A\in M(m|n)^*,|A|\leq r\}$ for $*=+,-$.

\begin{corollary}\label{image of canonical basis} For any $r>0$, let $\sfc_A=\eta_r(\cC_A)$ for all $A\in M(m|n)^+_{\leq r}$. Then
 $\{\sfc_A\}_{A\in M(m|n)_{\leq r}^+}$ forms a basis for $ \sS^+(m|n,r)$ which satisfies the following properties:
$$\overline{\sfc_A}=\sfc_A \mbox{ and } \sfc_A-A({\bf0},r)\in \sum_{B\prec A}\up^{-1}\mathbb{Z}[\up^{-1}]B({\bf0},r).$$
In other words, this is the canonical basis relative to $\mathscr B_r^+$ and the restrictions of the bar involution \eqref{bar on S} and $\preceq$.
Moreover, we have
\begin{equation}
\eta_r(\cC_A)=\left\{
\begin{aligned}
&\sfc_A, \mbox{ if } A\in M(m|n)_{\leq r}^+,\\
&0,\mbox{ otherwise}.
\end{aligned}
\right.
\end{equation}
A similar result holds for $ \sS^-(m|n,r)$.
\end{corollary}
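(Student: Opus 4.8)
The plan is to push the defining properties of the canonical basis $\{\cC_A\}$ of $U_\sZ^+$ (Theorem \ref{canonical basis}) through the epimorphism $\eta_r$, using that $\eta_r$ is compatible with the relevant structures. First I would record the behaviour of $\eta_r$ on the realisation basis: by definition of $A(\bsj)=\sum_{r\geq0}A(\bsj,r)$ together with \eqref{Ajr}, we have $\eta_r(A(\bfl))=A(\bfl,r)$, which vanishes precisely when $|A|>r$, i.e.\ $\eta_r(A(\bfl))=A(\bfl,r)$ equals $A(\bfl,r)$ if $A\in M(m|n)^+_{\leq r}$ and $0$ otherwise. Combined with the fact that $\mathscr B_r^+=\{A(\bfl,r)\mid A\in M(m|n)^+_{\leq r}\}$ is a $\sZ$-basis of $\sS^+(m|n,r)$, this already gives that $\eta_r$ carries the $\sZ$-span of $\mathscr B$ onto $\sS^+(m|n,r)$ with kernel spanned by those $A(\bfl)$ with $|A|>r$.

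Next I would apply $\eta_r$ to the explicit expansion \eqref{C_A}, $\cC_A=A(\bfl)+\sum_{B\prec A}p_{B,A}B(\bfl)$. For $A\in M(m|n)^+_{\leq r}$, the leading term survives and any $B\prec A$ has $\|B\|\leq\|A\|$, but more to the point $|B|=|A|\leq r$ as well (the order $\preceq$ on $M(m|n)^+$ preserves the sum of entries; this follows since $B\prec A$ forces the row/column conditions \eqref{prec} with equality on the relevant totals — or one simply notes all $B$ appearing have $|B|=|A|$ because the product $\fm_A^+$ is homogeneous of graded degree $\gd$), so $\eta_r(B(\bfl))=B(\bfl,r)\neq0$. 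Hence $\sfc_A:=\eta_r(\cC_A)=A(\bfl,r)+\sum_{B\prec A}p_{B,A}B(\bfl,r)$ with $p_{A,A}=1$, $p_{B,A}\in\up^{-1}\mbz[\up^{-1}]$; in particular $\sfc_A-A(\bfl,r)\in\sum_{B\prec A}\up^{-1}\mbz[\up^{-1}]B(\bfl,r)$, and the triangularity shows $\{\sfc_A\}_{A\in M(m|n)^+_{\leq r}}$ is a $\sZ$-basis of $\sS^+(m|n,r)$. Bar-invariance is immediate: $\eta_r$ intertwines $\bar{\ }$ on $\bfU$ with $\flat$-restricted-to-$\sS(m|n,r)$ (Remark \ref{bar compatibility}(1) and the sentence after the definition of $\eta_r$), so $\overline{\sfc_A}=\eta_r(\overline{\cC_A})=\eta_r(\cC_A)=\sfc_A$. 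Together these two properties are exactly the characterisation of the canonical basis of $\sS^+(m|n,r)$ relative to $\mathscr B_r^+$, the bar involution, and $\preceq$ (same uniqueness argument as in Theorem \ref{canonical basis}, now inside $\sS^+(m|n,r)$). For $A\in M(m|n)^+$ with $|A|>r$: every $B\preceq A$ appearing in \eqref{C_A} again satisfies $|B|=|A|>r$, so $\eta_r(B(\bfl))=0$ for all of them, giving $\eta_r(\cC_A)=0$. The statement for $\sS^-(m|n,r)$ then follows by applying the anti-involution $\tau$ (which $\eta_r$ intertwines with $\tau_r$, Remark \ref{bar compatibility}(2)) to everything, exactly as in \eqref{C-}.

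The only genuinely substantive point — and the one I would be most careful about — is the claim that every $B$ occurring with nonzero coefficient in $\cC_A$ has $|B|=|A|$, equivalently that $B\prec A$ within $M(m|n)^+$ implies $|B|=|A|$, so that one cannot "fall below" $r$ and pick up spurious contributions to the kernel or lose track of leading terms. The clean way to see this is graded-degree homogeneity: $\fm_A^+$ lies in a single graded piece $\bfU_{\gd}$ of $\bfU$ under \eqref{grading}, and all $B(\bfl)$ in its expansion \eqref{monomial} share that graded degree; for $A\in M(m|n)^+$ the graded degree determines $|A|=\sum_{i<j}a_{i,j}$, hence $|B|=|A|$. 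This feeds directly into both the "basis" and the "vanishing" halves of the corollary. Everything else is bookkeeping with the triangular relations already established.
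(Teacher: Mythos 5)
Your overall strategy is the same as the paper's (apply $\eta_r$ to the triangular expansion \eqref{C_A}, use the bar-compatibility of Remark \ref{bar compatibility}, and invoke uniqueness), but the combinatorial fact you single out as ``the only genuinely substantive point'' is false as you state it, and so is your justification of it. It is \emph{not} true that the graded degree of $A(\bfl)$ determines $|A|$: the matrices $E_{1,3}$ and $E_{1,2}+E_{2,3}$ both have graded degree $\alpha_1+\alpha_2$, yet $|E_{1,3}|=1$ while $|E_{1,2}+E_{2,3}|=2$. Consequently the terms $B\prec A$ occurring in $\cC_A$ need not satisfy $|B|=|A|$; the paper's own computation \eqref{canonical A2} exhibits $\cC_A$ for $A=aE_{1,2}+E_{1,3}$ (so $|A|=a+1$) containing the term $\up^{-(a+1)}\bigl((a+1)E_{1,2}+E_{2,3}\bigr)(\bfl)$ with $|B|=a+2$. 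Your parenthetical alternative (``the row/column conditions \eqref{prec} with equality on the relevant totals'') is likewise not a proof.

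What saves the argument is the weaker, and correct, inequality $|B|\geq |A|$ for every $B$ appearing in $\cC_A$, and this is what you actually need. Set $N_{s,t}(B)=\sum_{i\leq s,\,j\geq t}b_{i,j}$. Homogeneity of $\fm_A^+$ gives $N_{h,h+1}(B)=N_{h,h+1}(A)$ for all $h$ (these are the coefficients of $\ro(B)-\co(B)$ in the basis $\{\al_h\}$), while condition (1) of \eqref{prec} gives $N_{h-1,h+1}(B)\leq N_{h-1,h+1}(A)$; since $|B|=\sum_h\bigl(N_{h,h+1}(B)-N_{h-1,h+1}(B)\bigr)$ for strictly upper triangular $B$, one gets $|B|\geq|A|$. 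With this corrected lemma your two uses go through: for $|A|>r$ every $B$ has $|B|\geq|A|>r$, so $\eta_r(\cC_A)=0$; for $|A|\leq r$ some lower terms $B(\bfl,r)$ may now vanish (contrary to your claim that none do), but the surviving expression is still unitriangular with respect to $\preceq$ against $\mathscr B_r^+$, which is all that the basis claim and the membership $\sfc_A-A(\bfl,r)\in\sum_{B\prec A}\up^{-1}\mbz[\up^{-1}]B(\bfl,r)$ require. The rest of your argument (bar-invariance via $\eta_r\circ\bar{\ }=\flat\circ\eta_r$, uniqueness of the canonical basis, and transport to $\sS^-(m|n,r)$ via $\tau$) is fine and matches the paper.
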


\begin{proof}Applying $\eta_r$ to \eqref{monomial} yields a triangular relation in $\sS^+(m|n,r)$ between the monomial basis and $\mathscr B^+_r$. So the canonical basis, defined by the basis $\mathscr B^+_r$, the bar involution and the order $\preceq$, exists.
By Remark \ref{bar compatibility} and \eqref{C_A}, $\sfc_A$ does satisfy the described conditions. Hence, it is the required canonical basis.
\end{proof}

We now make a comparison between the canonical bases $\mathscr C^-$ for $U_\sZ^-$ and $\mathscr C_r$ for the quantum Schur superalgebra given in Corollary \ref{basis Xi}. Note that $\mathscr C_r$ can be defined by using the order $\preceq_\rc$ (see Remark \ref{Xi relative to order rc}).

Let $U_\sZ^0$ be the $\sZ$ subalgebra of $\bfU(\mathfrak{gl}_{m|n})$ generated by $\sfK_i$ and $\big[{\sfK_i;0\atop t}\big]$ for $1\leq i\leq m+n$ and $t\geq1$, where\footnote{We have corrected typos in line 3 from \cite[Th.~8.4]{DG}.}
$$\bigg[{\sfK_i;0\atop t}\bigg]=\prod_{a=1}^t\frac{\sfK_i\up_i^{-a+1}-\sfK_i^{-1}\up_i^{a-1}}{\up_i^{a}-\up_i^{-a}}.$$
Since, for any $\la\in\La(m|n,r)$, $\eta_r\big(\prod_{i=1}^{m+n}\big[{\sfK_i;0\atop \la_i}\big]\big)=[\diag(\la)]$, the $\sZ$-subalgebra
$\sS^{\geq0}(m|n,r):=\eta_r(U_\sZ^0U_\sZ^+)$ is spanned by $\{[\diag(\mu)]A(\bfl,r)\mid A\in M(m|n,r)^+,\mu\in\La(m|n,r)\}$. This is called a Borel subsuperalgebra.

For $A\in M(m|n)$, define the ``hook sums''
$$\fh_i(A)=a_{i,i}+\sum_{i<j}(a_{i,j}+a_{j,i})\text{ and } \bsfh(A)=(\fh_1(A),\ldots,\fh_{m+n}(A)).$$
If we write $A=A^-+A^0+A^+$, where $A^\pm\in M(m|n)^\pm$ and $A^0$ is diagonal, then
$$\bsfh(A)=(a_{1,1},\cdots,a_{n,n})+\co(A^-)+\ro(A^+).$$
Set $\bsfh(A)\leq\la$ if and only if $\fh_i(A)\leq \lambda_i$ for every $i$.

For $A\in M(m|n)^-$ and $\la\in\La(m|n,r)$ with $\bsfh(A)=\co(A)\leq\la$, let
\begin{equation}\label{Ala}
A_\lambda:=A+\diag(\la-\bsfh(A)).
\end{equation}
 It is clear that $[A_\la]=A(\bfl,r)[\diag(\la)]$ and the set
$$\{[A_\la]\mid A\in M(m|n,r)^-,\la\in\La(m|n,r),\bsfh(A)\leq\la\}$$
forms  a basis for $\sS^{\leq0}(m|n,r)$.

We have the following theorem (cf.  \cite[Th. 8.3]{DP}).

\begin{theorem}\label{canonical basis of two parts} For any $A\in M(m|n)^-$, if $|A|\leq r$, then $\sfc_A=\sum_{\bsfh(A)\leq\la}(-1)^{\bar A_\la}\Xi_{A_\la}$.
In other words, the image $\sfc_A$ of the canonical basis element $\cC_A\in U_\sZ^-$ under $\eta_r$ is either zero or a sum of the canonical basis elements $\Xi_{A_\la}=\sfc_A[\diag(\la)]\in\sS(m|n,r)$ ($\bsfh(A)\leq \la$). Moreover,
$$\mathscr{C}_r^{\leq 0}=\{\Xi_{A_\la}\mid A\in M(m|n,\leq r)^-,\lambda\in\Lambda(m|n,r),\sigma_i(A)\leq \lambda_i\}=\mathscr C_r\cap\sS^{\leq 0}(m|n,r)$$
forms the canonical basis for the Borel subsuperalgebra $\sS^{\leq0}(m|n,r)$.

A similar result holds for $\sS^{\geq0}(m|n,r)$
\end{theorem}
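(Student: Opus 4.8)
The plan is to reduce the statement to an explicit identification of $\eta_r(\cC_A)$, for $A \in M(m|n)^-$ with $|A| \le r$, as a sum of Kazhdan--Lusztig basis elements $\Xi_{A_\lambda}$ over all $\lambda$ with $\bsfh(A) \le \lambda$. By the anti-involution $\tau$ (Remark~\ref{bar compatibility}(2)) and the fact that $\tau_r$ sends $\xi_B$ to $\xi_{B^t}$ and preserves the Bruhat/reverse-column order, it suffices to establish the $\sS^{\ge 0}$-version: $\eta_r(\cC_A) = \sum_{\bsfh(A)\le\lambda}(-1)^{\bar A_\lambda}\Xi_{A_\lambda}$ for $A \in M(m|n)^+$, and then apply $\tau$. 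So I would work throughout with $U_\sZ^+$ and $\sS^{\ge 0}(m|n,r)$.

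First I would set $\sfy_A := \sum_{\bsfh(A)\le\lambda}(-1)^{\bar A_\lambda}\Xi_{A_\lambda}$ (when $|A| \le r$; otherwise $0$), where $A_\lambda = A + \diag(\lambda - \bsfh(A))$. The goal is to show $\sfy_A = \eta_r(\cC_A) = \sfc_A$. I would verify two properties characterising $\sfc_A$ as the canonical basis element relative to $\mathscr B^+_r$ (via Corollary~\ref{image of canonical basis}): (i) $\overline{\sfy_A} = \sfy_A$, and (ii) $\sfy_A - A(\bfl, r) \in \sum_{B \prec A} \up^{-1}\mbz[\up^{-1}]\, B(\bfl, r)$. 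For (i): each $\Xi_{A_\lambda}$ is bar-invariant by construction (Corollary~\ref{basis Xi}/Remark~\ref{Xi relative to order rc}), and the sign $(-1)^{\bar A_\lambda}$ is bar-invariant, so $\sfy_A$ is a $\mbz$-combination (indeed a sum with coefficients $\pm 1$ in front of bar-invariant elements) of bar-invariant elements, hence bar-invariant. The only subtlety is to make sure the signs $(-1)^{\bar A_\lambda}$ match the sign conventions in $[A_\lambda] = A(\bfl,r)[\diag(\lambda)]$ coming from \eqref{hatA} and \eqref{signAbar}; this is a bookkeeping check using $\bsfh(A) = \ro(A^+)$ shifted by the diagonal and the definition of $\bar{(\cdot)}$, noting that for $A \in M(m|n)^+$ the two sign functions differ only by a term involving the odd block that is constant as $\lambda$ varies within a fixed row sum $\ro(A)$.

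For property (ii) I would expand $\Xi_{A_\lambda} = [A_\lambda] + \sum_{B' \prec_{\rc} A_\lambda} p_{B', A_\lambda} [B']$ with $p \in \up^{-1}\mbz[\up^{-1}]$, then translate back using $[A_\lambda] = A(\bfl,r)[\diag(\lambda)]$ and $[\diag(\mu)] = O$-type diagonal elements, collecting terms of the same "lower triangular shape". The key combinatorial input is that summing $[A_\lambda]$ over all $\lambda \ge \bsfh(A)$ produces exactly $A(\bfl, r)$ (up to the unipotent triangular correction), because $A(\bfl,r) = \sum_{\mu} (-1)^{\overline{A + \diag(\mu)}}[A + \diag(\mu)]$ by \eqref{Ajr} with $\bsj = \mathbf 0$; after re-indexing $\mu \leftrightarrow \lambda - \bsfh(A)$ the diagonal-completion bijection is clean. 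The lower-order terms in the $p_{B',A_\lambda}$-expansion, after the same re-indexing, assemble into $\sum_{B \prec A} (\text{element of } \up^{-1}\mbz[\up^{-1}]) B(\bfl,r)$ because $B' \prec_{\rc} A_\lambda$ forces the "off-diagonal part" $B'^+$ of $B'$ to satisfy $B'^+ \preceq A$ with $B'^+ \ne A$ (strictness from $\|B'\| < \|A_\lambda\|$, using the norm inequality cited from \cite[Lem.~13.21]{DDPW}), and for each such $B'^+$ the diagonal completions pair up to give $B'^+(\bfl, r)$.

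The main obstacle I anticipate is precisely step (ii): controlling how the off-diagonal shapes behave under $\preceq_{\rc}$ versus $\preceq$, and ensuring that when I re-sum the Kazhdan--Lusztig lower-order coefficients $p_{B',A_\lambda}$ over the diagonal completions $\lambda$, I genuinely recover coefficients lying in $\up^{-1}\mbz[\up^{-1}]$ attached to honest $B(\bfl,r)$ with $B \in M(m|n)^+$, $B \prec A$ — with no leftover terms. This requires that the map $B' \mapsto B'^+$ (off-diagonal part) restricted to $\{B' : B' \prec_{\rc} A_\lambda,\ \text{some }\lambda\}$ has fibres that are exactly full sets of diagonal completions of a fixed $B \in M(m|n)^+$ with $B \prec A$; equivalently, that $\bsfh(B) = \co(B) = \co(A_\lambda) = \bsfh(A)$-compatible ranges match up. I would handle this by a careful fibrewise argument mirroring the one in Lemma~\ref{pEA} (swapping summations, using the bijection between $\{\nu \le \row_{h+1}(A^\mu)\}$ and shifted sets), or alternatively by invoking the nonsuper template \cite[Th.~8.3]{DP} and checking that the only new ingredient — the signs $(-1)^{\bar A_\lambda}$ and the parity constraints on $M(m|n)^+$ entries ($a_{i,j} \in \{0,1\}$ across the even/odd divide, from \cite[Prop.~3.2]{DR}) — does not disturb the triangularity. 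Finally, once $\sfy_A = \sfc_A$ is established, the characterisation $\mathscr C_r^{\le 0} = \mathscr C_r \cap \sS^{\le 0}(m|n,r)$ follows because the $\Xi_{A_\lambda}$ with $A \in M(m|n,\le r)^-$ and $\bsfh(A)\le\lambda$ are exactly the members of the Kazhdan--Lusztig basis $\mathscr C_r$ indexed by matrices supported on the lower-triangular-plus-diagonal region, which is visibly the index set of a basis of $\sS^{\le 0}(m|n,r)$; the $\sS^{\ge 0}$-statement is the $\tau$-image.
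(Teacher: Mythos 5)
Your overall target identity is the right one, but you have set the argument up in the harder direction, and the step you yourself flag as ``the main obstacle'' is a genuine unresolved gap rather than a bookkeeping check. Defining $\sfy_A=\sum_{\la}(-1)^{\bar A_\la}\Xi_{A_\la}$ and trying to verify the triangularity $\sfy_A-A(\bfl,r)\in\sum_{B\prec A}\up^{-1}\mbz[\up^{-1}]B(\bfl,r)$ forces you to re-sum the Kazhdan--Lusztig expansions $\Xi_{A_\la}=[A_\la]+\sum_{B'\prec_\rc A_\la}p_{B',A_\la}[B']$ over all diagonal completions $\la$. For the lower-order terms to reassemble into honest multiples of $B(\bfl,r)$ you need the coefficient $p_{B',A_\la}$ to depend only on the off-diagonal shapes of $B'$ and $A_\la$ and not on $\la$ --- a stabilisation property of the KL coefficients of $\sS(m|n,r)$ across diagonal completions. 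Nothing in the paper up to this point gives you that; it is in fact a \emph{consequence} of the theorem, not an available input, and neither of your two suggested strategies (mirroring Lemma \ref{pEA}, or citing the nonsuper template) actually establishes it. Without it, $\sum_\la\sum_{B'}p_{B',A_\la}[B']$ need not even lie in the span of $\{B(\bfl,r)\}$, let alone with coefficients in $\up^{-1}\mbz[\up^{-1}]$.

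The paper's proof runs in the opposite direction and avoids this entirely. Fix $\la$ and multiply the known expansion $\sfc_A=A(\bfl,r)+\sum_{B\prec A}p_{B,A}B(\bfl,r)$ (whose coefficients are $\la$-independent by construction, coming from Corollary \ref{image of canonical basis}) on the right by $[\diag(\la)]$. Each product $B(\bfl,r)[\diag(\la)]$ collapses to the \emph{single} term $(-1)^{\overline{B_\la}}[B_\la]$, so one gets $(-1)^{\overline{A_\la}}\sfc_A[\diag(\la)]=[A_\la]+\sum_{B_\la\prec_\rc A_\la}(\pm p_{B,A})[B_\la]$, which is bar-invariant (both factors are) and $\preceq_\rc$-triangular with coefficients in $\up^{-1}\mbz[\up^{-1}]$; uniqueness of the canonical basis relative to $\preceq_\rc$ (Remark \ref{Xi relative to order rc}) then forces this to be $\Xi_{A_\la}$, and summing over $\la$ gives the theorem. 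Also note the paper works directly with $A\in M(m|n)^-$ and $\sS^{\le0}$; your reduction via $\tau$ to the $+$ side is harmless but unnecessary. I would recommend you restructure your argument as ``multiply $\sfc_A$ by $[\diag(\la)]$ and identify the result'' rather than ``sum the $\Xi_{A_\la}$ and identify the result.''
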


\begin{proof}
By Remark \ref{bar compatibility} and Corollary \ref{image of canonical basis}, we have $\overline{\sfc_A}=\sfc_A$ and  $\overline{[\diag(\la)]}=[\diag(\la)]$.  Hence, $\overline{\sfc_A[\diag(\lambda)]}=\sfc_A[\diag(\lambda)]$.

Since, by definition,
$\sfc_A=A({\bf0},r)+\sum_{B\prec A} p_{B,A} B({\bf0},r)$, where $p_{B,A}\in \up^{-1}\mathbb{Z}[\up^{-1}]$, it follows that
$$\aligned
\sfc_A[\diag(\la)]&=A({\bf0},r)[\diag(\la)]+\sum_{B\prec A} p_{B,A} B({\bf0},r)[\diag(\la)]\\
&=(-1)^{\overline{A_\lambda}}[A_\lambda]+\sum_{B\prec A} p_{B,A}(-1)^{\overline{B_\lambda}}[B_\lambda].
\endaligned
$$
By definition, $B\prec A$ implies $B_\lambda\prec A_\lambda$. Also, $\la=\co(A_\la)$. Thus, if $\mu=\ro(A_\la)$, then
$\sfc_A[\diag(\la)]=[\diag(\mu)]\sfc_A$.
By Corollary \ref{image of canonical basis}, we obtain
$$(-1)^{\overline{A_\lambda}}\sfc_A[\diag(\la)]=[A_\lambda]+\sum_{B_\lambda\prec_{\rc} A_\lambda}(-1)^{\overline{A_\lambda}+\overline{B_\lambda}} p_{B,A}[{B_\lambda}],$$
where $\preceq_{\rc}$ is the partial order relation on $M(m|n,r)$ defined in \eqref{prec rc}. Now, by Remark \ref{Xi relative to order rc} and the uniqueness of canonical basis, we must have $(-1)^{\overline{A_\lambda}}\sfc_A[\diag(\la)]=\Xi_{A_\la}$.
\end{proof}

We end this section with description of a PBW type basis for $\sS(m|n,r)$.

\begin{corollary}\label{PBW for S} Maintain the notation in \eqref{Ala}.
For $A\in M(m|n)^\pm$ and $\la\in\La(m|n,r)$, we have
\begin{equation*}
A^-(\bfl,r)[\diag(\la)]A^+(\bfl,r)
=\vep_{\la,\bsfh(A)}(-1)^{\overline{A_\la}}[A_\la]+(\text{lower terms w.r.t. $\preceq_\rc$}),
\end{equation*}
where $\vep_{\la,\bsfh(A)}=1$ if $\la\geq\bsfh(A)$ and 0 otherwise.
In particular, the set $$\{A^-(\bfl,r)[\diag(\la)]A^+(\bfl,r)\mid A\in M(m|n)^\pm,\,\la\in\La(m|n,r),\,\la\geq\bsfh(A)\}$$ forms a $\sZ$-basis for $\sS(m|n,r)$.
\end{corollary}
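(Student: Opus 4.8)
The plan is to assemble the claimed triangular expansion from two ingredients already available: the product formula $[A_\la] = A^-(\bfl,r)[\diag(\la)]A^+(\bfl,r)$-type identities, and the triangularity of the PBW-type basis $\{A(\bfl)\}$ with respect to $\preceq$ recorded in \eqref{monomial} and its $\tau$-image. First I would recall from Theorem~\ref{canonical basis of two parts} (and its proof) the precise meaning of $A^-(\bfl,r)[\diag(\la)]$: we have the identity $A^-(\bfl,r)[\diag(\la)] = (-1)^{\overline{A^-_\la}}[A^-_\la]$ when $\bsfh(A^-)=\co(A^-)\leq\la$ and $0$ otherwise, where $A^-_\la = A^- + \diag(\la - \bsfh(A^-))$. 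Dually, using the anti-involution $\tau$ from \eqref{tau} together with Remark~\ref{bar compatibility}(2), I would obtain the companion identity $[\diag(\mu)]A^+(\bfl,r) = (-1)^{\overline{\cdot}}[\text{something}]$, i.e. multiplying on the right by a $+$-part element and a diagonal idempotent also produces, up to sign, a single basis element $[A^+_\mu]$ plus lower terms with respect to $\preceq_\rc$.

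Next I would combine these: write $A = A^- + A^0 + A^+$ as in the text, so that $A_\la = A^- + A^+ + \diag(\la - \bsfh(A))$, and observe $\bsfh(A) = \co(A^-) + \ro(A^+) + (a_{1,1},\dots)$. The middle diagonal $[\diag(\la)]$ acts as the appropriate idempotent sandwiched between $A^-(\bfl,r)$ and $A^+(\bfl,r)$. The point is that $A^-(\bfl,r)$ lowers the row-sum vector in a controlled way and $A^+(\bfl,r)$ raises the column-sum vector, so the product $A^-(\bfl,r)[\diag(\la)]A^+(\bfl,r)$ is supported on matrices $B$ with $\ro(B) = \ro(A_\la)$, $\co(B) = \co(A_\la) = \la$. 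The leading term, obtained by taking the ``diagonal'' choice in both multiplication formulas, is $[A_\la]$ with coefficient $\vep_{\la,\bsfh(A)}$ (which accounts for the requirement $\la\geq\bsfh(A)$ — otherwise some hook-sum constraint fails and the whole product vanishes) and the sign $(-1)^{\overline{A_\la}}$; all remaining terms $[B_\mu]$ satisfy $B_\mu \prec_\rc A_\la$ because every correction in \eqref{monomial} and in Lemma~\ref{integral generators} moves strictly down in $\preceq$ while preserving row and column sums. This uses \eqref{prec rc} and the observation, used already in the proof of Theorem~\ref{canonical basis of two parts}, that $B \prec A \Rightarrow B_\la \prec A_\la$.

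For the ``in particular'' clause, once the matrix $(A,\la)\mapsto A_\la$ is shown to be a bijection from the indexing set $\{(A,\la)\mid A\in M(m|n)^\pm,\ \la\in\La(m|n,r),\ \la\geq\bsfh(A)\}$ onto $M(m|n,r)$ — which is straightforward since from $A_\la\in M(m|n,r)$ one recovers $A = A_\la^- + A_\la^+$ and $\la = \co(A_\la)$ — the triangular relation with invertible (indeed, $\pm1$) leading coefficients with respect to the partial order $\preceq_\rc$ on $M(m|n,r)$ shows that $\{A^-(\bfl,r)[\diag(\la)]A^+(\bfl,r)\}$ is related to the $\sZ$-basis $\{[A_\la]\}$ by a unitriangular (up to sign) change of basis, hence is itself a $\sZ$-basis.

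The main obstacle I anticipate is bookkeeping: getting the sign $(-1)^{\overline{A_\la}}$ exactly right and checking that the ``lower terms'' genuinely stay within the fixed $\ro$/$\co$ sector so that $\preceq_\rc$ (rather than merely $\preceq$) governs them. The sign computation requires tracking the quadratic form $\overline{(\cdot)}$ of \eqref{signAbar} through the three multiplications, and one must verify compatibility between the sign conventions in $A(\bsj,r)$ (which carries $(-1)^{\overline{A+\diag(\la)}}$) and in $[A_\la]$; this is essentially the same kind of verification done in Lemma~\ref{pEA} and in the proof of Theorem~\ref{canonical basis of two parts}, so it should go through, but it is where the real work lies. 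Everything else is a formal consequence of results already in hand.
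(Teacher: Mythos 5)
Your overall architecture --- identify the leading term $[A_\la]$, show the corrections are strictly lower for $\preceq_\rc$, then invert a unitriangular change of basis via the bijection $(A,\la)\mapsto A_\la$ --- is the same as the paper's, and the bijection and the $\preceq_\rc$-versus-$\preceq$ points are handled correctly. The genuine gap is in the middle step. The crux of the corollary is the \emph{mixed} triangular relation
$$A^-(\bfl,r)A^+(\bfl,r)=A(\bfl,r)+(\text{lower terms w.r.t.\ }\preceq_\rc),$$
and neither \eqref{monomial} nor the multiplication formulas deliver it on their own: \eqref{monomial} and \eqref{mon-} are triangularity statements for one triangular part at a time (a monomial against $\{B(\bfl)\}_{B\in M(m|n)^+}$, resp.\ $M(m|n)^-$), while Lemmas \ref{integral generators} and \ref{pEA} only govern left multiplication by a single divided power of a generator acting on an upper-triangular $A(\bfl,r)$ --- they do not compute $[A^-_\la]\cdot A^+(\bfl,r)$. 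Your phrase ``the leading term, obtained by taking the `diagonal' choice in both multiplication formulas'' is precisely the assertion that needs proof; it is the content of \cite[(8.1.1)]{DG} (cf.\ \cite[Th.~7.1]{DG}, \cite[Th.~13.44]{DDPW}), which the paper cites at exactly this point. Without that citation you would have to reproduce the BLM-style argument: expand $A^\mp(\bfl,r)$ through the monomial bases, invoke the mixed triangular relation displayed just before \eqref{monomial}, and check that $C\preceq A^-$, $D\preceq A^+$ forces $C+D\preceq A$.

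Two smaller points. First, the paper's opening move is simpler than your sandwich analysis: commuting the idempotent gives $A^-(\bfl,r)[\diag(\la)]A^+(\bfl,r)=A^-(\bfl,r)A^+(\bfl,r)[\diag(\la+\co(A^+)-\ro(A^+))]$, so no separate ``dual'' identity for $[\diag(\mu)]A^+(\bfl,r)$ is needed; one only observes that $A(\bfl,r)[\diag(\la+\co(A^+)-\ro(A^+))]$ survives exactly when $\la-\bsfh(A)\in\mbn^{m+n}$, in which case it equals $(-1)^{\overline{A_\la}}[A_\la]$. Second, your claim that for $\la\not\geq\bsfh(A)$ ``the whole product vanishes'' overshoots: only the leading term is killed (hence the factor $\vep_{\la,\bsfh(A)}$ in the statement, which still allows nonzero lower terms); this does not affect the basis assertion, which uses only the pairs with $\la\geq\bsfh(A)$.
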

\begin{proof} For $A\in M(m|n)^\pm$, write $A=A^++A^-$ where
$A^+\in M(m|n)^+$ and $A^-\in M(m|n)^-$. Then $\mathfrak{h}(A)=\ro(A^+)+\co(A^-)$.
Since
$$A^-(\bfl,r)[\diag(\la)]A^+(\bfl,r)=A^-(\bfl,r)A^+(\bfl,r)[\diag(\la+\co(A^+)-\ro(A^+))]$$ and, by \cite[(8.1.1)]{DG} and the argument after \cite[(8.1.2)]{DG} (cf. \cite[Th.~7.1]{DG}, \cite[Th.~13.44]{DDPW}),
$$A^-(\bfl,r)A^+(\bfl,r)=A(\bfl,r)+(\text{lower terms w.r.t. $\preceq_\rc$}),$$
the first assertion follows the fact that $A(\bfl,r)[\diag(\la+\co(A^+)-\ro(A^+)]\neq 0$ implies $\co(A)+\mu=\la+\co(A^+)-\ro(A^+)$ for some $\mu\in\La(m|n,r-|A|)$ (so $\mu=\la-\bsfh(A)$). The last assertion is clear since, if $\la\geq\bsfh(A)$, then $A^-(\bfl,r)[\diag(\la)]A^+(\bfl,r)$ has the leading term $(-1)^{\overline{A_\la}}[A_\la]$.
\end{proof}

\section{Examples: Canonical bases for $U_\sZ^+(\mathfrak{gl}_{2|1})$ and $U_\sZ^+(\mathfrak{gl}_{2|2})$}
Recall from \S5 that the relation \eqref{monomial} was used to show the existence of canonical bases. In this section, we will see how this relation is used to compute the canonical basis element $\cC_A$.

Let $A\in M(m|n)^+$.  We use the order $\leq_1$ to write down the monomial $\fm_A^+$, i.e., the left hand side of \eqref{monomial}. Then apply the formula in Lemma \ref{pEA} to compute the right hand side of \eqref{monomial}:
$$
{\fm}^{+}_A=A({\bf0})+\sum_{B\in M(m|n)^+,B\prec A}g_{B,A}B({\bf0}) \quad(g_{B,A}\in\sZ).
$$
If all $g_{B,A}\in\sZ^-:=\up^{-1}\mathbb{Z}[\up^{-1}]$, then
by Theorem \ref{canonical basis}, $\cC_A=\fm_A^+$. Suppose now some $g_{B,A}\not\in\sZ^-$.
Partition the poset ideal
$$\sI_{\prec A}=\{B\in M(m|n)^+\mid B\prec A\}=\sI^1_{\prec A}\cup\sI^2_{\prec A}\cup\cdots\cup\sI^t_{\prec A},$$
into subsets $\sI^i_{\prec A}$ which consists of the maximal elements of $\sI_{\prec A}\setminus\bigcup_{j=1}^{i-1}\sI^j_{\prec A}$ for all $1\leq i\leq t$. In particular, $\sI^1_{\prec A}$ consists of all maximal elements of $\sI_{\prec A}$. Choose $B$ so that $g_{B,A}\not\in\sZ^-$ and $B\in\sI_{\prec A}^a$ with $a$ minimal. Thus, $g_{B',A}\in\sZ^-$ if $B\prec B'$ or $B'\in\sI_{\prec A}^i$ for some $i<a$.
Since every polynomial $g\in\sZ$ can be written as $g'+g''$ with $g'\in\sY:=\{h+\bar h\mid h\in\mathbb{Z}[\up]\}$ and $g''\in\sZ^-$, there exist $f_{B,A}\in \sZ$ and $g'_{B,A}\in\sY$ such that

\begin{equation*}\label{equation2}
{\fm}^+_{A}-\sum_{B\in \sI^a_{\prec A}}g'_{B,A} {\fm}^+_{B}
-\bigg(A({\bf0})+\sum_{B\in\sI^i_{\prec A},i>a}f_{B,A}B({\bf0})\bigg)\in\sum_{B\prec A}\sZ^-B(\bfl).
\end{equation*}
By a similar argument with $f_{B,A}$ and continuing if necessary, we can eventually find an $\fm\in\sum_{B\prec A}\sY\fm_B^+$ such that
$${\fm}^+_{A}+\fm
-A({\bf0})\in\sum_{B\prec A}\sZ^-B(\bfl).$$
Since $\overline{{\fm}^+_{A}+\fm}={\fm}^+_{A}+\fm$, we must have $\cC_A={\fm}^+_{A}+\fm$  by Theorem \ref{canonical basis}. We have proved the following.

\begin{lemma} \label{m=c} For $A\in M(m|n)^+$, there exists $\fm\in\sum_{B\prec A}\sY\fm_B^+$ such that the canonical basis element $\cC_A={\fm}^+_{A}+\fm$.
\end{lemma}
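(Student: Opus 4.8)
The statement to establish is Lemma~\ref{m=c}: for $A\in M(m|n)^+$ there is an element $\fm\in\sum_{B\prec A}\sY\fm_B^+$ (where $\sY=\{h+\bar h\mid h\in\mbz[\up]\}$) such that the canonical basis element satisfies $\cC_A=\fm_A^++\fm$. The key structural inputs are already in place: the triangular expansion \eqref{monomial} of the monomial basis element $\fm_A^+$ in terms of the realisation basis $\{B(\bfl)\}$, the bar-invariance of each $\fm_B^+$ (which follows because $\fm_B^+$ is a monomial in the bar-invariant generators $(\ell E_{h,h+1})(\bfl)$, by Remark~\ref{bar compatibility}(1)), and the characterisation of $\cC_A$ from Theorem~\ref{canonical basis} as the unique bar-invariant element congruent to $A(\bfl)$ modulo $\sum_{B\prec A}\up^{-1}\mbz[\up^{-1}]B(\bfl)$.

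The plan is to run the standard "correction by bar-invariant monomials" algorithm, which is essentially already written out in the paragraph preceding the lemma statement; I would simply organise it as an induction on the finite poset ideal $\sI_{\prec A}=\{B\in M(m|n)^+\mid B\prec A\}$ (finite because $\|B\|<\|A\|$ forces $B\prec A$ to lie among finitely many matrices). First I would write $\fm_A^+=A(\bfl)+\sum_{B\prec A}g_{B,A}B(\bfl)$ with $g_{B,A}\in\sZ$ via \eqref{monomial}, noting $\fm_A^+$ is bar-invariant. If all $g_{B,A}\in\sZ^-:=\up^{-1}\mbz[\up^{-1}]$ we are done with $\fm=0$ and $\cC_A=\fm_A^+$. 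Otherwise, stratify $\sI_{\prec A}$ into layers $\sI^1_{\prec A},\dots,\sI^t_{\prec A}$ by iterated maximal elements, pick $B\in\sI^a_{\prec A}$ with $a$ minimal among indices where some $g_{B,A}\notin\sZ^-$; since $\sZ=\sY\oplus\sZ^-$ as $\mbz$-modules (every $g\in\sZ$ is uniquely $g'+g''$ with $g'\in\sY$, $g''\in\sZ^-$ — this is the usual decomposition for bar operators), write the offending coefficients in that form and subtract $\sum_{B\in\sI^a_{\prec A}}g'_{B,A}\fm_B^+$. Because $\fm_B^+=B(\bfl)+(\text{lower terms})$ and $\fm_B^+$ is bar-invariant, this subtraction removes the non-$\sZ^-$ part of the coefficient of every $B(\bfl)$ with $B\in\sI^a_{\prec A}$ while only modifying coefficients of strictly lower basis elements — so the "badness stratum" strictly drops. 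Iterating (the process terminates since $\sI_{\prec A}$ is finite and at each step the minimal bad layer index strictly increases or the set of bad elements shrinks), I obtain $\fm\in\sum_{B\prec A}\sY\fm_B^+$ with $\fm_A^++\fm-A(\bfl)\in\sum_{B\prec A}\sZ^-B(\bfl)$. Finally $\overline{\fm_A^++\fm}=\fm_A^++\fm$ since every $\fm_B^+$ is bar-invariant and $\sY$ consists of bar-invariant Laurent polynomials, so by the uniqueness in Theorem~\ref{canonical basis}, $\cC_A=\fm_A^++\fm$.

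The one genuine technical point — really the only obstacle — is ensuring the correction procedure \emph{terminates} and that each correction step does not reintroduce badness into a layer that was already cleaned; I would handle this by the standard two-parameter descent argument: order the (finitely many) elements of $\sI_{\prec A}$ compatibly with $\prec$, and at each stage let $B_0$ be the $\prec$-maximal element whose coefficient lies outside $\sZ^-$, subtract the appropriate $\sY$-multiple of $\fm_{B_0}^+$, and observe that since $\fm_{B_0}^+\in B_0(\bfl)+\sum_{B\prec B_0}\sZ B(\bfl)$ this clears the coefficient at $B_0$ without touching any coefficient at a basis element $\not\prec B_0$ — in particular no coefficient at a $\prec$-larger element is altered, so the maximal bad element strictly decreases in the finite poset and the algorithm halts. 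This is exactly the mechanism in \cite[\S0.5]{DDPW} and \cite{CX}; since the paper has already assembled \eqref{monomial}, bar-invariance of $\fm_B^+$, and Theorem~\ref{canonical basis}, no new ideas are needed and the proof is short. I would present it in three or four sentences: state the triangular form, invoke the decomposition $\sZ=\sY\oplus\sZ^-$, run the descent, and conclude by uniqueness.
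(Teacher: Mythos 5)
Your proposal is correct and takes essentially the same route as the paper: the paper's proof of Lemma~\ref{m=c} is exactly the stratified correction algorithm you describe, namely the triangular expansion \eqref{monomial}, the splitting of each offending coefficient into a part in $\sY$ plus a part in $\sZ^-=\up^{-1}\mbz[\up^{-1}]$, the descent through the layers $\sI^i_{\prec A}$ of the finite poset ideal, and the appeal to uniqueness in Theorem~\ref{canonical basis} together with bar-invariance of the $\fm_B^+$. The only caveat --- one you share with the paper's own wording --- is that a decomposition $g=g'+g''$ with $g'\in\sY$ and $g''\in\sZ^-$ forces the constant term of $g$ to be even, so it is not literally available for every $g\in\sZ$; what the argument actually needs (and what always holds) is only that $g'$ be a bar-invariant element of $\sZ$.
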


This algorithm has been used before (see, e.g.,  \cite{CX}). We now use the algorithm to compute some small rank examples.
\begin{example}
The canonical basis of $U_\sZ^+(\mathfrak{gl}_{2|1})$ has been found in \cite{Kh} and \cite[8.1]{CHW2}. We now follow the algorithm via our multiplication formulas to show that it consists of
\begin{equation}\label{canonical A2}
\begin{aligned}
&\sfE_{1}^{(a)}=(aE_{1,2})({\bf0}),\\
&\sfE_{2}\sfE^{(a)}_{1}=(E_{2,3}+aE_{1,2})({\bf0}),\\
&\sfE_{1}\sfE_{2}\sfE^{(a)}_{1}-[a]\sfE_{2}\sfE^{(a+1)}_{1}=(aE_{1,2}+E_{1,3})({\bf0})+\up^{-(a+1)}((a+1)E_{1,2}+E_{2,3})({\bf0}),\\
&\sfE_{2}\sfE_{1}\sfE_{2}\sfE^{(a)}_{1}=(aE_{1,2}+E_{1,3}+E_{2,3})({\bf0}).
\end{aligned}
\end{equation}
We observe the following.
From \cite[Example 3.4]{Lu3}, $\bfU(\mathfrak{gl}_{3})$ has the canonical basis consisting of tight monomials
$$\{\sfE_{1}^{(b)}\sfE_{2}^{(b+c)}\sfE_{1}^{(a)}\mid c\geq a\}\cup\{\sfE_{2}^{(c)}\sfE_{1}^{(a+b)}\sfE_{2}^{(b)}\mid c<a\}.$$
If we regard $\sfE_2$ as an odd generator and only consider
the power $\sfE_{2}^{(a)}$ with $a=0,1$,  we  obtain the following elements from the classical canonical basis above:
$$\sfE_{1}^{(a)},\quad \sfE_{1}^{(a+1)}\sfE_{2},\quad\sfE_{2}\sfE_{1}^{(a)},\quad\sfE_{2}\sfE_{1}^{(a+1)}\sfE_{2}.$$
Then we claim that they coincide with the canonical basis above. Indeed, by
the multiplication formulas given in Lemma \ref{pEA}, we have
\begin{equation*}
\begin{aligned}
& \sfE_{1}^{(a+1)}\sfE_{2}=(aE_{1,2}+E_{1,3})({\bf0})+\up^{-(a+1)}((a+1)E_{1,2}+E_{2,3})({\bf0}).\\
&\sfE_{2}\sfE_{1}^{(a+1)}\sfE_{2}=(aE_{1,2}+E_{1,3}+E_{2,3})({\bf0}),
\end{aligned}
\end{equation*}
\end{example}
\noindent
which are the third and fourth elements in  \eqref{canonical A2}. Hence, the canonical basis for $U_\sZ^+(\mathfrak{gl}_{2|1})$ consists of tight monomials.

We now compute the  canonical basis of $U_\sZ^+(\mathfrak{gl}_{2|2})$.
We will use the following abbreviation for a $4\times4$ matrix in $M(2|2)^+$:
$$A=
\left[
\begin{array}{ccc}
a&b&d\\
&c&e\\
&&f
\end{array}
\right]:=\left(
\begin{array}{cccc}
0&a&b&d\\
0&0&c&e\\
0&0&0&f\\
0&0&0&0
\end{array}
\right)\in M(2|2)^+
$$
where $a,f\in\mathbb{Z}_{\geq0},b,c,d,e\in \{0,1\}$.

\begin{example} The canonical basis of $U_\sZ^+(\mathfrak{gl}_{2|2})$ is listed in the following 18 cases. Each case is displayed in the form: $A$, $\fm_A^++\fm=\cC_A$ as in Lemma \ref{m=c} where $\fm$ is a $\sY$-linear combinations of monomial basis.
\begin{itemize}
\item[(0)]$A=\left[
\begin{smallmatrix}
a&0&0\\
&0&0\\
&&f
\end{smallmatrix}
\right],
$
$\sfE_{3}^{(f)}\sfE_{1}^{(a)}=A({\bf0}).$ This is the only even case.

\item[(1)] $A=\left[
\begin{smallmatrix}
a&0&0\\
&1&0\\
&&f
\end{smallmatrix}
\right],
$
$\sfE_{3}^{(f)}\sfE_{2}\sfE_{1}^{(a)}=A({\bf0}).$

\item[(2)]  $A=\left[
\begin{smallmatrix}
a&1&0\\
&0&0\\
&&f
\end{smallmatrix}
\right],
$
$\sfE_{3}^{(f)}\sfE_{1}\sfE_{2}\sfE_{1}^{(a)}-[a]\sfE_{3}^{(f)}\sfE_{2}\sfE_{1}^{(a+1)}
=A({\bf0})+\up^{-a-1}\left[
\begin{smallmatrix}
a+1&0&0\\
&1&0\\
&&f
\end{smallmatrix}
\right]({\bf0}).$

\item[(3)] $A=\left[
\begin{smallmatrix}
a&0&0\\
&0&1\\
&&f
\end{smallmatrix}
\right],
$
$\sfE_{3}^{(f)}\sfE_{2}\sfE_{3}\sfE_{1}^{(a)}-[f+2]\sfE_{3}^{(f+1)}\sfE_{2}\sfE_{1}^{(a)}
=A({\bf0})-\up^{-f-1}\left[
\begin{smallmatrix}
a&0&0\\
&1&0\\
&&f+1
\end{smallmatrix}
\right]({\bf0}).$

\item[(4a)] $A=\left[
\begin{smallmatrix}
a&0&1\\
&0&0\\
&&f
\end{smallmatrix}
\right]
$ with $a\leq f$,
$$\aligned
&\quad\,\sfE_{3}^{(f)}\sfE_{1}\sfE_{2}\sfE_{3}\sfE_{1}^{(a)}-[a]\sfE_{3}^{(f)}\sfE_{2}\sfE_{3}\sfE_{1}^{(a+1)}
-[f+2]\sfE_{3}^{(f)}\sfE_{1}\sfE_{2}\sfE_{1}^{(a+1)}\\
&\quad\,+(2[a][f+2]+[f-a+1]-[a+1][f+1])\sfE_{3}^{(f+1)}\sfE_{2}\sfE_{1}^{(a+1)}\\
&=A({\bf0})
+\up^{-a-1}\left[
\begin{smallmatrix}
a+1&0&0\\
&0&1\\
&&f
\end{smallmatrix}
\right]({\bf0})
-\up^{-f-1}
\left[
\begin{smallmatrix}
a+1&1&0\\
&0&0\\
&&f+1
\end{smallmatrix}\right]({\bf0})
-\up^{-f-a-2}
\left[
\begin{smallmatrix}
a+1&0&0\\
&1&0\\
&&f+1
\end{smallmatrix}\right]({\bf0})
\endaligned
$$

\item[(4b)] $A=\left[
\begin{smallmatrix}
a&0&1\\
&0&0\\
&&f
\end{smallmatrix}
\right]
$ with $f=a-1$,
$$\aligned
&\quad\,\sfE_{3}^{(f)}\sfE_{1}\sfE_{2}\sfE_{3}\sfE_{1}^{(a)}-[a]\sfE_{3}^{(f)}\sfE_{2}\sfE_{3}\sfE_{1}^{(a+1)}
-[f+2]\sfE_{3}^{(f)}\sfE_{1}\sfE_{2}\sfE_{1}^{(a+1)}\\
&\quad\,+(2[a][f+2]-[a+1][f+1])\sfE_{3}^{(f+1)}\sfE_{2}\sfE_{1}^{(a+1)}\\
&=A({\bf0})
+\up^{-a-1}
\left[
\begin{smallmatrix}
a+1&0&0\\
&0&1\\
&&f
\end{smallmatrix}
\right]({\bf0})
-\up^{-f-1}
\left[
\begin{smallmatrix}
a+1&1&0\\
&0&0\\
&&f+1
\end{smallmatrix}\right]({\bf0})
+(*)
\left[
\begin{smallmatrix}
a+1&0&0\\
&1&0\\
&&f+1
\end{smallmatrix}\right]({\bf0})
\endaligned
$$
where $*=\up^{-f-1}[a]-\up^{-a-1}[f+2]$.

\item[(4c)] $A=\left[
\begin{smallmatrix}
a&0&1\\
&0&0\\
&&f
\end{smallmatrix}
\right]
$ with
 $f\leq a-2$,
$$\aligned
&\quad\,\sfE_{3}^{(f)}\sfE_{1}\sfE_{2}\sfE_{3}\sfE_{1}^{(a)}-[a]\sfE_{3}^{(f)}\sfE_{2}\sfE_{3}\sfE_{1}^{(a+1)}
-[f+2]\sfE_{3}^{(f)}\sfE_{1}\sfE_{2}\sfE_{1}^{(a+1)}\\
&\quad\,+(2[a][f+2]-[a-f-1]-[a+1][f+1])\sfE_{3}^{(f+1)}\sfE_{2}\sfE_{1}^{(a+1)}\\
&=A({\bf0})
+\up^{-a-1}\left[
\begin{smallmatrix}
a+1&0&0\\
&0&1\\
&&f
\end{smallmatrix}
\right]({\bf0})
-\up^{-f-1}
\left[
\begin{smallmatrix}
a+1&1&0\\
&0&0\\
&&f+1
\end{smallmatrix}\right]({\bf0})
-\up^{-f-a-2}
\left[
\begin{smallmatrix}
a+1&0&0\\
&1&0\\
&&f+1
\end{smallmatrix}\right]({\bf0}).
\endaligned
$$

\item[(5)] $A=\left[
\begin{smallmatrix}
a&1&0\\
&1&0\\
&&f
\end{smallmatrix}
\right],
$
$\sfE_{3}^{(f)}\sfE_{2}\sfE_{1}\sfE_{2}\sfE_{1}^{(a)}=A({\bf0}).$

\item[(6)] $A=\left[
\begin{smallmatrix}
a&0&0\\
&1&1\\
&&f
\end{smallmatrix}
\right],
$ $\sfE_{3}^{(f)}\sfE_{2}\sfE_{3}\sfE_{2}\sfE_{1}^{(a)}=A({\bf0}).$
\vspace{-2ex}

\item[(7)] $A=\left[
\begin{smallmatrix}
a&1&0\\
&0&1\\
&&f
\end{smallmatrix}
\right],
$
$\aligned
&\\
&\sfE_{3}^{(f)}\sfE_{2}\sfE_{3}\sfE_{1}\sfE_{2}\sfE_{1}^{(a)}-[a]\sfE_{3}^{(f)}\sfE_{2}\sfE_{3}\sfE_{2}\sfE_{1}^{(a+1)}
-[f+2]\sfE_{3}^{(f+1)}\sfE_{2}\sfE_{1}\sfE_{2}\sfE_{1}^{(a)}\\
&=A({\bf0})+\up^{-a-1}
\left[
\begin{smallmatrix}
a+1&0&0\\
&1&1\\
&&f
\end{smallmatrix}
\right]({\bf0})
-\up^{-f-1}
\left[
\begin{smallmatrix}
a&1&0\\
&1&0\\
&&f+1
\end{smallmatrix}
\right]({\bf0})
\endaligned
$
\vspace{.2cm}

\item[(8a)] $A=\left[
\begin{smallmatrix}
a&0&1\\
&1&0\\
&&f
\end{smallmatrix}
\right] (a=0),
$
$\sfE_{3}^{(f)}\sfE_{1}\sfE_{2}\sfE_{3}\sfE_{2}=A({\bf0})
+\up^{-1}
\left[
\begin{smallmatrix}
0&1&0\\
&0&1\\
&&f+1
\end{smallmatrix}
\right]({\bf0})
+\up^{-2}
\left[
\begin{smallmatrix}
1&0&0\\
&1&1\\
&&f
\end{smallmatrix}
\right]({\bf0}).
$

\item[(8b)] $A=\left[
\begin{smallmatrix}
a&0&1\\
&1&0\\
&&f
\end{smallmatrix}
\right] (a>0),
$
$\aligned
&\\
&\sfE_{3}^{(f)}\sfE_{1}\sfE_{2}\sfE_{3}\sfE_{2}\sfE_{1}^{(a)}
-[a-1]\sfE_{3}^{(f)}\sfE_{2}\sfE_{3}\sfE_{2}\sfE_{1}^{(a+1)}\\
&=A({\bf0})
+\up^{-1}
\left[
\begin{smallmatrix}
a&1&0\\
&0&1\\
&&f+1
\end{smallmatrix}
\right]({\bf0})
+(\up^{-a}+\up^{-a-2})
\left[
\begin{smallmatrix}
a+1&0&0\\
&1&1\\
&&f
\end{smallmatrix}
\right]({\bf0}).
\endaligned
$

\item[(9)] $A=\left[
\begin{smallmatrix}
a&1&1\\
&0&0\\
&&f
\end{smallmatrix}
\right],
$
$$\aligned
&\quad\,\sfE_{3}^{(f)}\sfE_{1}\sfE_{2}\sfE_{3}\sfE_{1}\sfE_{2}\sfE_{1}^{(a)}
-[a]\sfE_{3}^{(f)}\sfE_{1}\sfE_{2}\sfE_{3}\sfE_{2}\sfE_{1}^{(a+1)}\\
&\quad\,-[a+1]\sfE_{3}^{(f)}\sfE_{2}\sfE_{3}\sfE_{1}\sfE_{2}\sfE_{1}^{(a+1)}
+[a+1]^2\sfE_{3}^{(f)}\sfE_{2}\sfE_{3}\sfE_{2}\sfE_{1}^{(a+2)}\\
&=A({\bf0})
+\up^{-a-1}
\left[
\begin{smallmatrix}
a+1&0&1\\
&1&0\\
&&f
\end{smallmatrix}
\right]({\bf0})
+\up^{-a-2}
\left[
\begin{smallmatrix}
a+1&1&0\\
&0&1\\
&&f
\end{smallmatrix}
\right]({\bf0})
+\up^{-2a-4}
\left[
\begin{smallmatrix}
a+2&0&0\\
&1&1\\
&&f
\end{smallmatrix}
\right]({\bf0}).
\endaligned$$

\item[(10)]
$A=\left[
\begin{smallmatrix}
a&1&0\\
&1&1\\
&&f
\end{smallmatrix}
\right],
$ $\sfE_{3}^{(f)}\sfE_{2}\sfE_{3}\sfE_{2}\sfE_{1}\sfE_{2}\sfE_{1}^{(a)}=A({\bf0}).$

\item[(11)] $A=\left[
\begin{smallmatrix}
a&1&1\\
&1&0\\
&&f
\end{smallmatrix}
\right],
$
$\aligned
&\\
&\sfE_{3}^{(f)}\sfE_{1}\sfE_{2}\sfE_{3}\sfE_{2}\sfE_{1}\sfE_{2}\sfE_{1}^{(a)}
-[a]\sfE_{3}^{(f)}\sfE_{2}\sfE_{3}\sfE_{2}\sfE_{1}\sfE_{2}\sfE_{1}^{(a+1)}\\
&=A({\bf0})+\up^{-a-1}\left[
\begin{smallmatrix}
a&1&0\\
&1&1\\
&&f
\end{smallmatrix}
\right]({\bf0})
\endaligned
$
\vspace{-2ex}

\item[(12)]
$A=\left[
\begin{smallmatrix}
a&0&1\\
&1&1\\
&&f
\end{smallmatrix}
\right],
$
$\aligned
&\\
&\sfE_{3}^{(f)}\sfE_{2}\sfE_{3}\sfE_{1}\sfE_{2}\sfE_{3}\sfE_{2}\sfE_{1}^{(a)}
+[f+2]\sfE_{3}^{(f+1)}\sfE_{2}\sfE_{3}\sfE_{2}\sfE_{1}\sfE_{2}\sfE_{1}^{(a)}\\
&=A({\bf0})+\up^{-f-1}
\left[
\begin{smallmatrix}
a&1&0\\
&1&1\\
&&f+1
\end{smallmatrix}
\right]({\bf0}).
\endaligned
$

\item[(13a)] $A=\left[
\begin{smallmatrix}
a&1&1\\
&0&1\\
&&f
\end{smallmatrix}
\right]
$ with $a\leq f$,
$$\aligned
&\quad\;\sfE_{3}^{(f)}\sfE_{2}\sfE_{3}\sfE_{1}\sfE_{2}\sfE_{3}\sfE_{1}\sfE_{2}\sfE_{1}^{(a)}+[f+2]\sfE_{3}^{(f+1)}\sfE_{1}\sfE_{2}\sfE_{3}\sfE_{2}\sfE_{1}\sfE_{2}\sfE_{1}^{(a)}\\
&\quad\;-[a]\sfE_{3}^{(f)}\sfE_{2}\sfE_{3}\sfE_{1}\sfE_{2}\sfE_{3}\sfE_{2}\sfE_{1}^{(a+1)}
-(2[a][f+2]+[f-a+1])\sfE_{3}^{(f+1)}\sfE_{2}\sfE_{3}\sfE_{2}\sfE_{1}\sfE_{2}\sfE_{1}^{(a+1)}\\
&=A({\bf0})
+\up^{-f-1}
\left[
\begin{smallmatrix}
a&1&1\\
&1&0\\
&&f+1
\end{smallmatrix}
\right]({\bf0})
+\up^{-a-1}
\left[
\begin{smallmatrix}
a+1&0&1\\
&1&1\\
&&f
\end{smallmatrix}
\right]({\bf0})
+\up^{-f-a-2}
\left[
\begin{smallmatrix}
a+1&1&0\\
&1&1\\
&&f
\end{smallmatrix}
\right]({\bf0})
\endaligned$$

\item[(13b)] $A=\left[
\begin{smallmatrix}
a&1&1\\
&0&1\\
&&f
\end{smallmatrix}
\right]
$ with $f=a-1$,
$$\aligned
&\quad\;\sfE_{3}^{(f)}\sfE_{2}\sfE_{3}\sfE_{1}\sfE_{2}\sfE_{3}\sfE_{1}\sfE_{2}\sfE_{1}^{(a)}
+[f+2]\sfE_{3}^{(f+1)}\sfE_{1}\sfE_{2}\sfE_{3}\sfE_{2}\sfE_{1}\sfE_{2}\sfE_{1}^{(a)}\\
&\quad\;-[a]\sfE_{3}^{(f)}\sfE_{2}\sfE_{3}\sfE_{1}\sfE_{2}\sfE_{3}\sfE_{2}\sfE_{1}^{(a+1)}
-2[a][f+2]\sfE_{3}^{(f+1)}\sfE_{2}\sfE_{3}\sfE_{2}\sfE_{1}\sfE_{2}\sfE_{1}^{(a+1)}\\
&=A({\bf0})
+\up^{-f-1}
\left[
\begin{smallmatrix}
a&1&1\\
&1&0\\
&&f+1
\end{smallmatrix}
\right]({\bf0})
+\up^{-a-1}
\left[
\begin{smallmatrix}
a+1&0&1\\
&1&1\\
&&f
\end{smallmatrix}
\right]({\bf0})
+(**)
\left[
\begin{smallmatrix}
a+1&1&0\\
&1&1\\
&&f
\end{smallmatrix}
\right]({\bf0})
\endaligned$$
where $(**)=\up^{-a-1}[f+2]-\up^{-f-1}[a]$

\item[(13c)] $A=\left[
\begin{smallmatrix}
a&1&1\\
&0&1\\
&&f
\end{smallmatrix}
\right]
$ with $f\leq a-2$,
$$\aligned
&\quad\;\sfE_{3}^{(f)}\sfE_{2}\sfE_{3}\sfE_{1}\sfE_{2}\sfE_{3}\sfE_{1}\sfE_{2}\sfE_{1}^{(a)}+[f+2]\sfE_{3}^{(f+1)}\sfE_{1}\sfE_{2}\sfE_{3}\sfE_{2}\sfE_{1}\sfE_{2}\sfE_{1}^{(a)}\\
&\quad\;-[a]\sfE_{3}^{(f)}\sfE_{2}\sfE_{3}\sfE_{1}\sfE_{2}\sfE_{3}\sfE_{2}\sfE_{1}^{(a+1)}-(2[a][f+2]-[a-\!f\!-1])\sfE_{3}^{(f+1)}\sfE_{2}\sfE_{3}\sfE_{2}\sfE_{1}\sfE_{2}\sfE_{1}^{(a+1)}\\
&=A({\bf0})
+\up^{-f-1}
\left[
\begin{smallmatrix}
a&1&1\\
&1&0\\
&&f+1
\end{smallmatrix}
\right]({\bf0})
+\up^{-a-1}
\left[
\begin{smallmatrix}
a+1&0&1\\
&1&1\\
&&f
\end{smallmatrix}
\right]({\bf0})
+\up^{-f-a-2}
\left[
\begin{smallmatrix}
a+1&1&0\\
&1&1\\
&&f
\end{smallmatrix}
\right]({\bf0})
\endaligned$$

\item[(14)] $A=\left[
\begin{smallmatrix}
a&1&1\\
&1&1\\
&&f
\end{smallmatrix}
\right],
$ $\sfE_{3}^{(f)}\sfE_{2}\sfE_{3}\sfE_{1}\sfE_{2}\sfE_{3}\sfE_{2}\sfE_{1}\sfE_{2}\sfE_{1}^{(a)}=A({\bf0}).$
\end{itemize}
\end{example}
\begin{proof}
We just give a proof for (9). The other cases can be proved in a similar way.

For $A=\left[
\begin{smallmatrix}
a&1&1\\
&0&0\\
&&f
\end{smallmatrix}
\right]=aE_{1,2}+E_{1,3}+E_{1,4}+fE_{3,4}$,  by definition,
$${\fm}^{+}_{A}=\sfE^{(f)}_{3,4}\cdot\sfE_{1,2}\sfE_{2,3}\sfE_{3,4}\cdot\sfE_{1,2}\sfE_{2,3}\cdot\sfE_{1,2}^{(a)}.$$
Repeatedly applying the multiplication formula in Lemma \ref{integral generators} yields
\begin{equation}
\begin{aligned}
{\fm}^{+}_{A}=&\sfE^{(f)}_{3,4}\big((aE_{1,2}+E_{1,3}+E_{1,4})({\bf0})\\
&+\up^{a-1}\overline{[\![a+1]\!]}((a+1)E_{1,2}+E_{2,3}+E_{1,4})({\bf0})\\
&+(\up^{a-2}\overline{[\![a+1]\!]}+\up^{a}\overline{[\![a+1]\!]})((a+1)E_{1,2}+E_{1,3}+E_{2,4})({\bf0})\\
&+\up^{a-1}\overline{[\![a+1]\!]}\up^{a-1}\overline{[\![a+2]\!]}((a+2)E_{1,2}+E_{2,3}+E_{2,4})({\bf0})\\
&+\up^{a+1}\overline{[\![a+1]\!]}[f+1]((a+1)E_{1,2}+E_{2,3}+E_{1,3}+E_{3,4})({\bf0})\big)
\end{aligned}
\end{equation}

For $C=(c_{i,j})\in M(2|2)^+$, by the multiplication formula in Lemma \ref{pEA},
 $$\sfE^{(f)}_{3,4}C({\bf0})=\up_3^{fc_{3,4}}\overline{\left[\!\!\left[f+c_{3,4}\atop f\right]\!\!\right]}_{\up_3}(C+fE_{3,4})({\bf0}).$$
Then, noting $\up_3^{f}\overline{\left[\!\!\left[f+1\atop f\right]\!\!\right]}_{\up_3}=[f+1]_{\up_3}=[f+1],$
$$\sfE^{(f)}_{3,4}C({\bf0})=\begin{cases}(C+fE_{3,4})({\bf0}),&\text{ if }c_{3,4}=0;\\
[f+1](C+fE_{3,4})({\bf0}),&\text{ if }c_{3,4}=1.\end{cases}$$
Thus,
\begin{equation}
\begin{aligned}
{\fm}^{+}_{A}
=&A({\bf0})
+\up^{a-1}\overline{[\![a+1]\!]}((a+1)E_{1,2}+E_{2,3}+E_{1,4}+fE_{3,4})({\bf0})\\
&+(\up^{a-2}\overline{[\![a+1]\!]}+\up^{a}\overline{[\![a+1]\!]})((a+1)E_{1,2}+E_{1,3}+E_{2,4}+fE_{3,4})({\bf0})\\
&+\up^{a-1}\overline{[\![a+1]\!]}\up^{a-1}\overline{[\![a+2]\!]}((a+2)E_{1,2}+E_{2,3}+E_{2,4}+fE_{3,4})({\bf0})\\
&+\up^{a+1}\overline{[\![a+1]\!]}[f+1]((a+1)E_{1,2}+E_{2,3}+E_{1,3}+(f+1)E_{3,4})({\bf0})
\end{aligned}
\end{equation}

Observing the summands above, the maximal matrix $B$ such that $B\prec A$ is
 $B_1=(a+1)E_{1,2}+E_{2,3}+E_{1,4}+fE_{3,4}$, and the coefficient of $B_1({\bf0})$ is
$$\up^{a-1}\overline{[\![a+1]\!]}=[a]+\up^{-a-1}.$$
So we compute ${\fm}^{+}_{A}-[a]{\fm}^{+}_{B_1}$. By the multiplication formulas, we have
\begin{equation*}
\begin{aligned}
{\fm}^{+}_{B_1}=&\sfE^{(f)}_{3,4}\sfE_{1,2}\sfE_{2,3}\sfE_{3,4}\sfE_{2,3}\sfE_{1,2}^{(a+1)}\\
=&B_1({\bf0})+\up^{-1}((a+1)E_{1,2}+E_{1,3}+E_{2,4}+fE_{3,4})({\bf0})\\
&+\up^{a-1}\overline{[\![a+2]\!]}((a+2)E_{1,2}+E_{2,3}+E_{2,4}+fE_{3,4})({\bf0}).
\end{aligned}
\end{equation*}
Then
\begin{equation}\label{MAMB1}
\aligned
{\fm}^{+}_{A}&-[a]{\fm}^{+}_{B_1}=A({\bf0})+\up^{-a-1}B_1({\bf0})\\
&+(\up^{a-2}\overline{[\![a+1]\!]}+\up^{a}\overline{[\![a+1]\!]}-\up^{-1}[a])((a+1)E_{1,2}+E_{1,3}+E_{2,4}+fE_{3,4})({\bf0})\\
&+(\up^{a-1}\overline{[\![a+1]\!]}\up^{a-1}\overline{[\![a+2]\!]}-\up^{a-1}\overline{[\![a+2]\!]}[a])((a+2)E_{1,2}+E_{2,3}+E_{2,4}+fE_{3,4})({\bf0})\\
&+\up^{a+1}\overline{[\![a+1]\!]}[f+1]((a+1)E_{1,2}+E_{2,3}+E_{1,3}+(f+1)E_{3,4})({\bf0})
\endaligned
\end{equation}
Now the maximal matrices $B$ in \eqref{MAMB1} such that $B\prec B_1$ is $$B_2=(a+1)E_{1,2}+E_{1,3}+E_{2,4}+fE_{3,4}.$$
Since the coefficient of $B_2({\bf0})$ in \eqref{MAMB1} is
$$\aligned
\up^{a-2}\overline{[\![a+1]\!]}&+\up^{a}\overline{[\![a+1]\!]}-\up^{-1}[a]\\
&=[a-1]+\up^{-a}+\up^{-a-2}+[a+1]-[a-1]-\up^{-a}=\up^{-a-2}+[a+1],
\endaligned$$
We now compute ${\fm}^{+}_{A}-[a]{\fm}^{+}_{B_1}-[a+1]{\fm}^{+}_{B_2}$. Since
\begin{equation*}
\begin{aligned}
{\fm}^{+}_{B_2}=&\sfE^{(f)}_{3,4}\sfE_{2,3}\sfE_{3,4}\sfE_{1,2}\sfE_{2,3}\sfE_{1,2}^{(a+1)}\\
=&B_2({\bf0})+\up^a\overline{[\![a+2]\!]}((a+2)E_{1,2}+E_{2,3}+E_{2,4}+fE_{3,4})({\bf0})\\
&+\up[f+1]((a+1)E_{1,2}+E_{2,3}+E_{1,3}+(f+1)E_{3,4})({\bf0}),
\end{aligned}
\end{equation*}
it yields
\begin{equation*}
\aligned
{\fm}^{+}_{A}&-[a]{\fm}^{+}_{B_1}-[a+1]{\fm}^{+}_{B_2}=A({\bf0})+\up^{-a-1}B_1({\bf0})+\up^{-a-2}B_2({\bf0})\\
&+(\up^{a-1}\overline{[\![a+1]\!]}\up^{a-1}\overline{[\![a+2]\!]}-\up^{a-1}\overline{[\![a+2]\!]}[a]-\up^a[a+1]\overline{[\![a+2]\!]})\\
&((a+2)E_{1,2}+E_{2,3}+E_{2,4}+fE_{3,4})({\bf0})\\
&+(\up^{a+1}\overline{[\![a+1]\!]}[f+1]-\up[a+1][f+1])((a+1)E_{1,2}+E_{2,3}+E_{1,3}+(f+1)E_{3,4})({\bf0})
\endaligned
\end{equation*}
But the coefficient of $((a+1)E_{1,2}+E_{2,3}+E_{1,3}+(f+1)E_{3,4})({\bf0})$ is
$$\up^{a+1}\overline{[\![a+1]\!]}[f+1]-[a+1]\up[f+1]=\up[a+1][f+1]-\up[a+1][f+1]=0,$$
so
\begin{equation}
\aligned
{\fm}^{+}_{A}&-[a]{\fm}^{+}_{B_1}-[a+1]{\fm}^{+}_{B_2}=A({\bf0})+\up^{-a-1}B_1({\bf0})+\up^{-a-2}B_2({\bf0})\\
&+(\up^{a-1}\overline{[\![a+1]\!]}\up^{a-1}\overline{[\![a+2]\!]}-\up^{a-1}\overline{[\![a+2]\!]}[a]-\up^a[a+1]\overline{[\![a+2]\!]})\\
&\quad\;((a+2)E_{1,2}+E_{2,3}+E_{2,4}+fE_{3,4})({\bf0})\\
\endaligned
\end{equation}
Let $B_3=(a+2)E_{1,2}+E_{2,3}+E_{2,4}+fE_{3,4}$ and rewrite coefficient of $B_3({\bf0})$ as
$$\aligned
\up^{a-1}\overline{[\![a+1]\!]}&\up^{a-1}\overline{[\![a+2]\!]}-\up^{a-1}\overline{[\![a+2]\!]}[a]-\up^a[a+1]\overline{[\![a+2]\!]}\\
&=\up^{-a-1}[a]-\up^{-a-2}[a+1]+\up^{-2a-2}+\up^{-2a-4}-[a+1]^2\\
&=\up^{-2a-4}-[a+1]^2.
\endaligned
$$
Finally, we compute ${\fm}^{+}_{A}-[a]{\fm}^{+}_{B_1}-[a+1]{\fm}^{+}_{B_2}+[a+1]^2 {\fm}^{+}_{B_3}.$ Since
$$
{\fm}^{+}_{B_3}=\sfE^{(f)}_{3,4}\sfE_{2,3}\sfE_{3,4}\sfE_{2,3}\sfE_{1,2}^{(a+2)}
=B_3({\bf0}),
$$
it follows that
$$
\aligned
{\fm}^{+}_{A}-[a]{\fm}^{+}_{B_1}&-[a+1]{\fm}^{+}_{B_2}+[a+1]^2 {\fm}^{+}_{B_3}\\
&=A({\bf0})+\up^{-a-1}B_1({\bf0})+\up^{-a-2}B_2({\bf0})+\up^{-2a-4}B_3({\bf0})\\
\endaligned
$$
is the required canonical basis element $\cC_A$.
\end{proof}

\section{Simple polynomial representations of $\bfU(\mathfrak{gl}_{m|n})$}

For a finite dimensional $\bfU(\mathfrak{gl}_{m|n})$-module $M$ and $\la\in\mathbb Z^{m+n}$, let
$$M_\la=\bigg\{x\in M\mid K_ix=\up_i^{\la_i}x, 1\leq i\leq m+n\bigg\}.$$
If $M_\la\neq 0$, then $M_\la$ is called the weight space of $M$ of weight $\la$.
Call $M$ an {\it integral weight module} (of type {\bf 1}) if $M=\bigoplus_\la M_\la$ and denote by wt$(M)$ the set of all weights of $M$. A weight module $M$ is called a {\it polynomial representation} of $\bfU(\mathfrak{gl}_{m|n})$ if
wt$(M)\subset \mathbb N^{m+n}$. Clearly, a tensor power of a polynomial representation is polynomial. In particular,  the tensor power $V^{\otimes r}$ of the natural representation $V$ of $U(\mathfrak{gl}_{m|n})$ is a polynomial representation.

Let $\bfU=\bfU(\mathfrak{gl}_{m|n})$ and $\bfU_{\bar0}=\bfU(\mathfrak{gl}_{m}\oplus\mathfrak{gl}_n)$ and $\bfU_{\bar 1}^\pm=\bfU(\mathfrak{gl}^\pm_{m|n,\bar1})$.
For $\lambda\in \La^+(m|n)$, let $L^{\bar0}(\lambda)$ be the (finite dimensional) irreducible module of $ \bfU_{\bar{0}}$ with the highest
weight $\lambda$. Then $L^{\bar0}(\lambda)$ becomes  a
module of the parabolic superalgebra $\bfU_{\bar{0}} \bfU^+_{\bar{1}}$ via the trivial action of $E_{a,b}$ on $L^{\bar0}(\lambda)$ for all $1\leq a\leq m<b\leq m+n$.
Define  the {\it Kac--module} (see \cite{Z})
$$K(\lambda)=\mathrm{Ind}^{\bfU}_{\bfU_{\bar{0}} \bfU^+_{\bar{1}}}L^{\bar0}(\lambda)=\bfU\otimes_{\bfU_{\bar{0}} \bfU^+_{\bar{1}}}L^{\bar0}(\la).$$
Since $ \bfU$ is a free $ \bfU_{\bar{0}} \bfU^+_{\bar{1}}$ module,
as vector spaces, we have
$$K(\lambda)\cong  \bfU^-_{\bar{1}}\otimes L^{\bar0}(\lambda).$$
Note that, for all $\mu\in\text{wt}(K(\la))$, $|\la|=|\mu|$ and $\mu\unlhd\la$ (meaning $\la-\mu$ is a sum of positive roots).\footnote{Since
$\la-\mu=(\la_1-\mu_1)\bse_1+\cdots+(\la_n-\mu_n)\bse_n=(\tilde\la_1-\tilde\mu_1)(\bse_1-\bse_2)+\cdots+(\tilde\la_{n-1}-\tilde\mu_{n-1})(\bse_{n-1}-\bse_n)$ ($\tilde a_j=\sum_{i=1}^ja_i$), this order is the usual dominance order $\unrhd$ if $\la,\mu$ are regarded as compositions.} Thus, we say that $K(\la)$ is a representation of $\bfU$ at {\it level} $|\la|$. Moreover, every $K(\la)$ has a unique maximal submodule and hence defines an simple module $L(\la)$. In fact, the set $\{L(\la)\mid\la\in\La^+(m|n)\}$ forms a complete set of finite dimensional simple $\bfU$-modules.

Since every  irreducible finite dimensional module $L(\lambda)$ of $\bfU$ is a quotient module of a Kac module $K(\lambda)$, $L(\la)$ is a representation at the same level as $K(\lambda)$.
\begin{lemma}\label{inflate}
The irreducible polynomial representations of $\bfU(\mathfrak{gl}_{m|n})$ at level $r\geq0$ are all inflated via $\eta_r$ from the irreducible representations of $\bsS(m|n,r)$.
\end{lemma}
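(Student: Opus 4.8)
The statement to prove is Lemma \ref{inflate}: every irreducible polynomial representation of $\bfU(\mathfrak{gl}_{m|n})$ at level $r$ factors through the epimorphism $\eta_r:\bfU(\mathfrak{gl}_{m|n})\to\bsS(m|n,r)$. The strategy is to show that any such module $L(\la)$ with $|\la|=r$ is annihilated by $\ker\eta_r$, so that the $\bfU$-action descends to an $\bsS(m|n,r)$-action. The cleanest route is via the tensor space: recall from the excerpt (the discussion around Lemma \ref{DRB-DHB}) that $\bsS(m|n,r)=\End_{\sH}(V(m|n)^{\otimes r})$, so $V(m|n)^{\otimes r}$ is naturally a $\bsS(m|n,r)$-module and hence, by pullback along $\eta_r$, a $\bfU$-module. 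First I would identify this $\bfU$-module structure on $V^{\otimes r}$ with the $r$-fold tensor power of the natural representation $V$ of $\bfU$; this is essentially built into the construction of $\eta_r$ (the generators $\sfE_h,\sfF_h,\sfK_i$ act on $V(m|n)^{\otimes r}$ through $E_{h,h+1}(\mathbf 0,r)$, etc., which is the coproduct action on the tensor power). Thus $V^{\otimes r}$, a polynomial representation at level $r$, is inflated from $\bsS(m|n,r)$.

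**Key steps.** (1) Observe that $V^{\otimes r}$ is a faithful $\bsS(m|n,r)$-module by construction, so $\ker\eta_r=\mathrm{Ann}_{\bfU}(V^{\otimes r})$. (2) Show that every irreducible polynomial $\bfU$-module $L(\la)$ at level $r=|\la|$ occurs as a composition factor (equivalently, a subquotient) of $V^{\otimes r}$. For this I would use that $L(\la)$ is the simple head of the Kac module $K(\la)$, and that $K(\la)$ has a weight $\la\in\La^+(m|n,r)$; the standard argument is that $V^{\otimes r}$ contains a highest weight vector of every dominant weight $\la$ with $|\la|=r$ — indeed $V^{\otimes r}$ already contains $v_{\bsi_\la}$ of weight $\la$, and one checks (using the triangular decomposition and the action of the $\sfE_h$) that the $\bfU$-submodule generated by a suitable such vector has simple head $L(\la)$, or more directly that $L(\la)$ appears in the socle/head filtration of the permutation-type summand $x_{\la^{(0)}}y_{\la^{(1)}}\sH$-dual piece. (3) Conclude: $\ker\eta_r=\mathrm{Ann}_{\bfU}(V^{\otimes r})\subseteq\mathrm{Ann}_{\bfU}(L(\la))$ for every such $\la$, so $L(\la)$ is inflated via $\eta_r$. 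Conversely any irreducible $\bsS(m|n,r)$-module inflates to a polynomial $\bfU$-module at level $r$, giving the bijection implicit in the statement.

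**Alternative, more self-contained route.** If one prefers not to invoke composition-factor arguments for $V^{\otimes r}$, I would instead argue directly on weights. A polynomial representation $M$ at level $r$ has all weights in $\La(m|n,r)$ (nonnegative, summing to $r$, since weights of $K(\la)$ satisfy $|\mu|=|\la|$). Now $\ker\eta_r$ is generated — as discussed implicitly in \cite{DG} — by elements forcing the eigenvalue relations $\prod_{i}\bigl[{\sfK_i;0\atop \la_i}\bigr]=0$ for $|\la|\ne r$ together with relations among divided powers of the $\sfE_h,\sfF_h$ that hold automatically once all weights lie in $\La(m|n,r)$; more concretely $\eta_r$ kills precisely the elements vanishing on every weight space of weight in $\La(m|n,r)$. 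Since $M$ has only such weights, $\ker\eta_r$ acts as zero on $M$. This requires knowing a generating set for $\ker\eta_r$, which can be extracted from Corollary \ref{image of canonical basis} and the realisation results quoted from \cite{DG}.

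**Main obstacle.** The delicate point is step (2) in the first route (or the precise description of $\ker\eta_r$ in the alternative): showing every $L(\la)$ with $|\la|=r$ genuinely sits inside $V^{\otimes r}$ as a subquotient, i.e. that $\eta_r$ is ``enough'' to see all level-$r$ simples. In the classical $\mathfrak{gl}_n$ case this is Schur–Weyl duality plus the fact that $V^{\otimes r}$ contains each Weyl module; in the super case one must use the analogue from \cite{DR} that $\fT(m|n,r)$ (equivalently $V(m|n)^{\otimes r}$) is a faithful $\bsS(m|n,r)$-module whose endomorphism-algebra structure captures all of $\bsS(m|n,r)$, and that the simple $\bsS(m|n,r)$-modules are indexed by $\La^+(m|n,r)$ with the same highest weights as the $L(\la)$. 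Granting these facts (all available in \cite{DR, DG}), the inflation is immediate; the bulk of the work is bookkeeping to match the two labelling schemes and to confirm that ``polynomial at level $r$'' is exactly the image of ``$\bsS(m|n,r)$-module'' under inflation.
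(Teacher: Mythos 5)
Your overall strategy --- show that $\ker\eta_r$ annihilates any irreducible polynomial representation at level $r$ --- is the right one, and your ``alternative route'' is in fact close in spirit to the paper's actual proof. But in both routes the decisive input is left unestablished. In your primary route, step (2) (every polynomial simple $L(\la)$ with $|\la|=r$ occurs in $V^{\otimes r}$) is essentially equivalent to the hook-partition classification of polynomial simples, and the argument you sketch cannot work as stated: the weight vector $v_{\bsi_\la}\in V^{\otimes r}$ exists for \emph{every} $\la\in\La^+(m|n,r)$, whereas $L(\la)$ occurs in $V^{\otimes r}$ only for $\la=\tilde\pi$ with $\pi\in\Pi(r)_{m|n}$ (this is Theorem \ref{ID} of the paper, which is proved \emph{after} and partly \emph{from} this lemma). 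For example, for $\mathfrak{gl}_{1|1}$ and $r=2$ the weight $(0|2)$ is dominant and $v_2\otimes v_2\in V^{\otimes 2}$ has that weight, yet $V^{\otimes 2}\cong L((2|0))\oplus L((1|1))$ contains no copy of $L((0|2))$ (which is indeed not polynomial, since it has the weight $(-1,3)$). So your route would first require knowing that a polynomial simple at level $r$ has hook highest weight --- which is the nontrivial content you are trying to prove --- and deferring this to \cite{DR,DG} is circular, since the identification of the simple $\bsS(m|n,r)$-modules with the $L(\tilde\pi)$ is carried out in the present paper using this lemma.

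Your alternative route correctly reduces the problem to exhibiting a generating set of $\ker\eta_r$ consisting of elements that act as zero once all weights lie in $\La(m|n,r)$, but you do not supply one, and it cannot be ``extracted from Corollary \ref{image of canonical basis}'' or the realisation results of \cite{DG}: those give bases of the image and surjectivity of $\eta_r$, not a presentation of the quotient. The paper's proof rests precisely on the presentation of $\bsS(m|n,r)$ due to El Turkey--Kujawa \cite{TK}, which says that $\ker\eta_r$ is generated by $K_1\cdots K_mK_{m+1}^{-1}\cdots K_{m+n}^{-1}-\up^r$ and $(K_i-1)(K_i-\up_i)\cdots(K_i-\up_i^r)$ for $1\le i\le m+n$; a one-line computation on a weight vector $x\in M_\mu$ with $\mu\in\La(m|n,r)$ (so $|\mu|=r$ and $0\le\mu_i\le r$) shows these act as zero, whence the action of $\bfU$ on $M$ factors through $\bsS(m|n,r)$. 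That presentation theorem is the missing ingredient; with it the lemma is immediate, and without it (or an equivalent description of $\ker\eta_r$) neither of your routes closes.
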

\begin{proof} Clearly, if $M$ is an $\bsS(m|n,r)$-module, then $M=\bigoplus_{\la\in\La(m|n,r)}M_\la$ as a $\bfU(\mathfrak{gl}_{m|n})$-module, where $M_\la=\xi_\la M$ with $\xi_\la=[\diag(\la)]$. This is seen easily since $\eta_r(K_i)=\sum_{\la}\up_i^{\la_i}\xi_\la$. Hence, every inflated module is a module at level $r$.

 Assume now $M$ is an irreducible polynomial representation of $\bfU(\mathfrak{gl}_{m|n})$ at level $r$.
For any $0\neq x\in M_\mu$,
$$\aligned
K_1K_2\cdots K_mK^{-1}_{m+1}\cdots K^{-1}_{m+n}\cdot x&=\up^{\sum_{i=1}^{m+n}\mu_i}x= \up^rx\\
(K_i-1)(K_i-\up_i)\cdots (K_i-\up_i^r)\cdot x&=\prod_{j=0}^r(\up_i^{\mu_j}-\up_i^j)x=0x=0.\endaligned
$$
 By the presentation for $\bsS(m|n,r)$ given in \cite{TK}, we see that $M$ is in fact an inflation of a simple $\bsS(m|n,r)$-module.
\end{proof}

By this lemma, the study of simple polynomial representations of $\bfU(\mathfrak{gl}_{m|n})$ is reduced to that of simple $\bsS(m|n,r)$-modules for all $r\geq0$. Simple $\bsS(m|n,r)$-modules have been classified and constructed in \cite{DR} via a certain cellular basis.
We now use the cellular bases adjusted with a sign as in defining the canonical basis $\{\Xi_A\}_A$ to see how the canonical bases for $U_\sZ^-$ and $\sS(m|n,r)$ induce related bases for these modules.

For $A=\jmath(\la,d,\mu)$ as in \eqref{jmath}, define compositions $\al,\beta$ by \eqref{Dcirc},
\begin{equation}\label{beta}
\fS_{\al|\beta}:=\fS_{\la d\cap\mu}\cong (\fS_{\la^{(0)}}^d\cap\fS_{\mu^{(0)}})\times (\fS_{\la^{(1)}}^d\cap\fS_{\mu^{(1)}}).
\end{equation}

Let $\xi_A'=\up^{l(w_{0,\beta})}P_{\fS_\beta}(\up^{-2})\xi_A$ (cf. \cite[(6.3.1)]{DR}). Using this basis and the bar involution defined in \eqref{bar on S} (cf. \cite[Th.~6.3]{DR}), one defines another canonical basis $\{\Xi'_A\mid A\in M(m|n,r)\}$ for $\bsS(m|n,r)$. Note that this basis is not integral basis over $\sZ$ but a cellular basis over $\mbq(\up)$. We now describe its cellularity.

For $\la\in \Lambda(m|n,r),\mu\in \Lambda(m'|n',r)$, let
$$\sD_{\la,\mu}^{+,-}=\sD^+_{(\la^{(0)}|1^{a_1}),(\mu^{(0)}|1^{b_1})}\cap \sD_{(1^{a_0}|\la^{(1)}),(1^{b_0}|\mu^{(1)})},\qquad(\text{see \cite[(3.0.4)]{DR}})$$
where $a_i=|\la^{(i)}|$ and $b_i=|\mu^{(i)}|$. Then the map \eqref{jmath} induces a bijection
$$\jmath^{+,-}:\sD(m|n,r)^{+,-}\longrightarrow M(m|n,r),$$
where
$$\sD(m|n,r)^{+,-}=\{(\la,w,\mu)\mid \la,\mu\in\La(m|n,r),w\in\sD^{+,-}_{\la,\mu}\}.$$
\begin{definition}Let
 $A=\jmath^{+,-}(\al,y,\beta),B=\jmath^{+,-}(\la,w,\mu)\in M(m|n,r)$. Define
$$A\leq_LB \iff y\leq_Lw \mbox{ and } \mu=\beta,$$
where $y\le_Lw$ is the order relation $\le_L$ on $\fS_r$ defined in \cite{KL}.
\end{definition}

With this order relation, the structure constants for the basis $\{\Xi'_A\}_{A\in M(m|n,r)}$ satisfy the following order relation.
\begin{lemma}[{\cite[7.2]{DR}}]\label{DR,7.2}
For $A,B\in M(m|n,r)$, if $\Xi'_A\Xi'_B=\sum_{C\in M(m|n,r)}f_{A,B,C} \Xi'_C$, then $f_{A,B,C}\neq0$ implies $C\leq_LB$ and $C\leq_R A$.
\end{lemma}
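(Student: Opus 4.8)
The plan is to reduce the statement to the corresponding property of the Kazhdan--Lusztig basis $\{C'_w\}_{w\in\fS_r}$ of $\sH$, using the realisation $\sS(m|n,r)=\End_\sH(\fT(m|n,r))$ with $\fT(m|n,r)=\bigoplus_\nu M^\nu$, $M^\nu:=z_\nu\sH$ and $z_\nu:=x_{\nu^{(0)}}y_{\nu^{(1)}}$; composition of endomorphisms is read as ``apply the right-hand factor first''. Here $\leq_R$ on $M(m|n,r)$ is defined exactly as $\leq_L$ but with the first and third components of $\jmath^{+,-}$ interchanged and $\leq_L$ on $\fS_r$ replaced by $\leq_R$.

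The first, elementary, step settles the matching of row and column data. For $A=\jmath^{+,-}(\al,y,\beta)$ the element $\xi'_A$ is a $\mbq(\up)$-multiple of $\xi_A=[A]$, hence of $\phi_A$, so it annihilates every summand $M^\nu$ with $\nu\neq\beta$ and carries $M^\beta$ into $M^\al$. Since $\Xi'_A-\xi'_A$ is a $\up^{-1}\mbz[\up^{-1}]$-combination of the $\xi'_{A'}$ with $A'\leq A$ in the Bruhat order \eqref{Bruhat}, and every such $A'$ has the same row composition $\al$ and column composition $\beta$ as $A$, the map $\Xi'_A$ has the same property. Consequently $\Xi'_A\Xi'_B$ vanishes unless the column composition of $A$ equals the row composition of $B$, and whenever $f_{A,B,C}\neq0$ the matrix $C$ must have the same column composition as $B$ and the same row composition as $A$; these are precisely the first clauses of ``$C\leq_L B$'' and ``$C\leq_R A$''.

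It remains to control the middle (Weyl group) components. The plan is: over $\mbq(\up)$, where $\sH$ is split semisimple, identify $\Hom_\sH(M^\mu,M^\la)$ with the corner $z_\la\sH z_\mu:=z_\la\sH\cap\sH z_\mu$ via $\theta\mapsto\theta(z_\mu)$; under this identification composition becomes (a rescaling of) the multiplication $z_\la\sH z_\mu\times z_\mu\sH z_\kappa\to z_\la\sH z_\kappa$ inside $\sH$, since $z_\mu^2$ is a scalar multiple of $z_\mu$. The standard basis $\{\xi'_A\mid\ro(A)=\la,\co(A)=\mu\}$ corresponds, up to scalars, to the $\phi$-type basis $\{T_{\fS_\la d\fS_\mu}\mid d\in\sD^\circ_{\la\mu}\}$ of that corner, while the canonical basis $\{\Xi'_A\}$ corresponds, up to a bar-invariant scalar, to the subset $\{C'_w\mid w\in\sD^{+,-}_{\la,\mu}\}$ of the Kazhdan--Lusztig basis of $\sH$ lying in the corner, with the middle component $w_A$ of $A=\jmath^{+,-}(\la,w_A,\mu)$ being precisely the index of the corresponding $C'_{w_A}$; this is the content of the parametrisation $\jmath^{+,-}$ and of the construction of $\{\Xi'_A\}$ in \cite[\S6]{DR}. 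Granting this, apply the standard multiplication property $C'_xC'_y=\sum_z h_{x,y,z}C'_z$ with $h_{x,y,z}\neq0\Rightarrow z\leq_L y$ and $z\leq_R x$ to $C'_{w_A}C'_{w_B}$: each $C'_{w_C}$ occurring lies in the corner $z_{\al_A}\sH z_{\beta_B}$ (so $w_C\in\sD^{+,-}_{\al_A,\beta_B}$ and $C=\jmath^{+,-}(\al_A,w_C,\beta_B)$ is defined), and $w_C\leq_L w_B$, $w_C\leq_R w_A$ translate directly into $C\leq_L B$ and $C\leq_R A$.

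The main obstacle is the identification in the third paragraph --- specifically, that $\{\Xi'_A\}$ corresponds to \emph{Kazhdan--Lusztig} basis elements of $\sH$ and not merely to the $\phi$-type basis. When $n=0$ this is the familiar description of the $q$-Schur algebra as $\End_\sH$ of a sum of $q$-permutation modules together with its Kazhdan--Lusztig cellular basis; in the super case $z_\nu$ intertwines the $q$-symmetriser $x_{\nu^{(0)}}$ and the $q$-antisymmetriser $y_{\nu^{(1)}}$, so one must use simultaneously the compatibility of the Kazhdan--Lusztig basis with induction and restriction for \emph{both} the trivial- and the sign-type modules of the commuting parabolic subgroups $\fS_{\nu^{(0)}}$ and $\fS_{\nu^{(1)}}$; this is what forces the use of the longest double coset representatives $\sD^{+,-}_{\la,\mu}$ and the Poincar\'e-polynomial normalisation in the definition of $\xi'_A$, which in turn is what makes the bar involution of $\sS(m|n,r)$ restrict compatibly to the corners. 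The sign twist relating $\Xi'_A$ to the raw cellular basis is a scalar and plays no role. Alternatively one may bypass the identification and argue by induction on $r$, expanding $\Xi'_B=\xi'_B+(\text{Bruhat-lower terms})$ and using the explicit products of $\xi'_B$ with the ``simple'' generators together with the order behaviour of the structure constants; but this only reproduces the Kazhdan--Lusztig argument in a less transparent form.
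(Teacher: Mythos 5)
The paper gives no argument for this lemma at all --- it is quoted verbatim from \cite[7.2]{DR} --- so there is nothing in-paper to compare with; I can only assess your proposal against what such a proof must contain. Your first paragraph (matching of row and column compositions via the Bruhat-order triangularity of $\Xi'_A$ against $\xi'_A$) is correct and complete, and your overall strategy --- identify $\Hom_{\sH}(z_\mu\sH,z_\la\sH)$ with the corner $z_\la\sH\cap\sH z_\mu$, note that composition is a nonzero rescaling of multiplication in $\sH$, and invoke the multiplicative property of the Kazhdan--Lusztig basis with respect to $\leq_L$ and $\leq_R$ --- is indeed the mechanism behind the cited result.

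The gap is the key identification in your third paragraph: it is \emph{false} that the corner $z_\la\sH\cap\sH z_\mu$ is spanned by the Kazhdan--Lusztig elements $C'_w$ with $w\in\sD^{+,-}_{\la,\mu}$, or that $\Xi'_A$ corresponds to a bar-invariant scalar multiple of $C'_{w_A}$. Take $m=0$, $n=1$, $r=2$ and $\la=\mu=(\,|2)$: then $\sD^{+,-}_{\la,\mu}=\{e\}$, the corner $y_{(2)}\sH\cap\sH y_{(2)}=\mbq(\up)\,y_{(2)}$ is spanned by $y_{(2)}=T_e-\up^{-2}T_s$, which is proportional to the \emph{sign-type} element $C_s$ and is not proportional to any single $C'_w$, while $C'_e=T_e$ does not even lie in the corner. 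In general the antisymmetriser $y_{\la^{(1)}}$ forces $C$-type elements indexed by the \emph{longest} elements of the odd double cosets, whereas $\sD^{+,-}_{\la,\mu}$ records the \emph{shortest} odd representatives; so one must both exhibit the correct mixed basis of the corner (a bar-invariant element with triangular expansion, which is what \cite[\S6]{DR} actually constructs) and reconcile the $\leq_L$, $\leq_R$ statements for the indices of that mixed basis with the statements of the lemma, which are phrased in terms of the $\sD^{+,-}$-components. You flag this difficulty but then argue from the literal (false) identification, so the decisive step is missing; the fallback ``induction on $r$ using the explicit products'' is too vague to close it. With the corrected identification --- e.g.\ showing that $\Xi'_A(z_\mu)$, expanded in the $C'$-basis of $\sH$, has support contained in the appropriate left/right-cell ideals determined by $A$ --- the remainder of your argument does go through.
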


Define $A\leq_R B$ if $A^t\leq_L B^t$. Let $\leq_{LR}$ be the preorder generated by $\leq_L$ and $\leq_R$. The relations give rise to three equivalence relations $\sim_L,\sim_R$ and $\sim_{LR}$ on $M(m|n,r)$. Thus, $A\sim_X B$ if and only if $A\leq_X B\leq_X A$ for all $X\in\{L,R,LR\}$. The corresponding equivalence classes in $M(m|n,r)$ with respect to $\sim_L,\sim_R$ and $\sim_{LR}$ are called {\it left cells, right cells} and {\it two-sided cells} respectively.

Like the symmetric group case, the cells defined here can also be described in terms of a super version of the Robinson--Schensted correspondence.

Let  $\Pi(r)$ be the set of all partitions of $r$ and let
$$\Pi(r)_{m|n}=\{\pi\in\Pi(r)\mid \pi_{m+1}\leq n\}.$$
For $\pi\in \Pi(r)_{m|n}$ and
$\mu\in \Lambda(m|n,r)$. A $\pi$-tableau $\sfT$ of content $\mu$ is called a {\it semi-standard $\pi$-supertableau} of type $\mu$ if, in addition,
\begin{itemize}
\item[a)]the entries are weakly increasing in each row and each column of $\sfT$;
\item[b)] the numbers in $\{1,2,\cdots,m\}$ are strictly increasing in the columns and the numbers in $\{m+1,m+2,\cdots,m+n\}$ are strictly increasing in the rows.
\end{itemize}

Let $\mathbf{T}^{su}(\pi,\mu)$ be the set of all semi-stardard $\pi$-supertableaux of content $\mu$. In  particular, for the given partition $\pi$, if we set
\begin{equation}\label{pitilde}
\tilde\pi^{(0)}=(\pi_1,\pi_2,\cdots,\pi_m),\quad \tilde\pi^{(1)}=(\pi_{m+1},\pi_{m+2},\cdots,\pi_{m+n})^t,
\end{equation}
 then $\tilde\pi=(\tilde\pi^{(0)}|\tilde\pi^{(1)})\in\La(m|n,r)$ and $\mathbf{T}^{su}(\pi,\tilde\pi)$ contains a unique element, denoted by $\sfT_\pi$. We also write $\sh(\sfT)=\pi$ if $\sfT\in\mathbf{T}^{su}(\pi,\mu)$, called the {\it shape} of $\sfT$.

\begin{lemma}[{\cite[4.7,7.3]{DR}}]\label{DR,7.3} There is a bijective map
$$\text{\rm RSKs}:M(m|n,r)\longrightarrow\bigcup_{\la,\mu\in\La(m|n,r)\atop \pi\in\Pi(r)_{m|n}}\bfT^{su}(\pi,\la)\times\bfT^{su}(\pi,\mu),\quad A\longrightarrow(\sfp(A),\sfq(A))$$ such that $\la=\ro(A)$, $\mu=co(A)$ and,
for $A,B\in M(m|n,r)$ with $\pi^t=\sh(\sfp(A)), \nu^t=\sh(\sfp(B))$,
\begin{itemize}
\item[(1)] $A\sim_L B$ if  and only if $\sfq(A)=\sfq(B)$.

\item[(2)]$A\sim_R B$ if  and only if $\sfp(A)=\sfp(B)$.

\item[(3)] $A\leq_{LR}B$ implies $\pi\unrhd\nu$. Hence, $A\sim_{LR} B$ if and only if $\sfp(A),\sfp(B)$ have the same shape.
\end{itemize}
\end{lemma}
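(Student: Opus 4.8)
The plan is to construct $\mathrm{RSKs}$ from a super Robinson--Schensted--Knuth insertion, check that it is a bijection onto the stated target, and then deduce the cell statements (1)--(3) by reducing, through the parametrisation $A=\jmath^{+,-}(\la,w,\mu)$, to the Kazhdan--Lusztig cell theory of $\fS_r$. For the construction, encode a matrix $A=(a_{i,j})\in M(m|n,r)$ with $\ro(A)=\la$, $\co(A)=\mu$ as a two-line array of length $r$ in which the pair $(i,j)$ occurs $a_{i,j}$ times, listed in the appropriate lexicographic order (column-indices weakly increasing, ties broken by the row-indices). Inserting the row-indices one at a time into an insertion tableau $\sfp$ by the Schensted rule for the hook alphabet $1<\cdots<m<m{+}1<\cdots<m{+}n$ --- bumping column-strict among the even letters $1,\dots,m$ and row-strict among the odd letters $m{+}1,\dots,m{+}n$ --- and labelling each newly created box in $\sfq$ by the corresponding column-index, produces a pair $(\sfp(A),\sfq(A))$ of semistandard supertableaux of a common shape, of contents $\ro(A)$ and $\co(A)$ respectively; the super-semistandardness of $\sfp(A)$ is forced by the bumping rule, and that of $\sfq(A)$ by the standard fact that successive insertions of a fixed letter add boxes in increasing column order.

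Bijectivity is proved by reverse bumping: the last-created box of $\sfq(A)$ locates the entry of $\sfp(A)$ to be de-inserted, and iterating recovers the two-line array and hence $A$. Relative to the purely even case, the only new feature is that reverse bumping must alternate the column-strict and row-strict conventions according to the parity of the letter being bumped, which one checks is well defined exactly when $\sfq(A)$ is super-semistandard. Describing the image also explains why $\pi$ must lie in $\Pi(r)_{m|n}$: a column of a supertableau of shape $\pi$ holds at most $m$ (strictly increasing) even letters and a row at most $n$ (strictly increasing) odd letters, which forces $\pi_{m+1}\le n$.

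For the cell statements, write $A=\jmath^{+,-}(\la,y,\mu)$ and $B=\jmath^{+,-}(\la',w,\mu')$. By the definition of $\leq_L$ on $M(m|n,r)$ (immediately before Lemma~\ref{DR,7.2}), $A\leq_L B\iff\mu=\mu'$ and $y\leq_L w$ in $\fS_r$, hence $A\sim_L B\iff\mu=\mu'$ and $y\sim_L w$; transposing gives the analogue for $\leq_R,\sim_R$, and combining the two gives $A\leq_{LR}B\Rightarrow y\leq_{LR}w$. The combinatorial bridge is a compatibility lemma: the super encoding of $A$ standardises --- equal even letters in left-to-right word order, equal odd letters in right-to-left order --- precisely to the one-line array of the distinguished representative $y$, so that the super insertion of the former factors through the classical insertion of $y$; consequently $(\sfp(A),\sfq(A))$ is the de-standardisation --- grouping entries according to $\ro(A)$ in the first tableau and according to $\co(A)$ in the second --- of the classical Robinson--Schensted pair $(P_{\mathrm{cl}}(y),Q_{\mathrm{cl}}(y))$, and $\rho:=\sh(\sfp(A))=\sh(P_{\mathrm{cl}}(y))$, so $\pi=\rho^t$ in the notation of the statement. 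Granting this, the fibres of $A\mapsto\sfq(A)$ are exactly the sets on which $\co(A)$ and $Q_{\mathrm{cl}}(y)$ are both fixed; since, in the Robinson--Schensted form of Kazhdan--Lusztig \cite{KL}, the left cells of $\fS_r$ are the fibres of $Q_{\mathrm{cl}}$, and Lemma~\ref{DR,7.2} gives the reverse inclusion (the structure constants of $\{\Xi'_A\}$ are nonzero often enough to link any two matrices sharing the invariant), (1) follows. Statement (2) follows by applying $A\mapsto A^t$ and the super symmetry $\mathrm{RSKs}(A^t)=(\sfq(A),\sfp(A))$. For (3), $A\leq_{LR}B$ yields $y\leq_{LR}w$, hence $\rho(y)\unlhd\rho(w)$ since the classical two-sided cells are totally ordered by dominance of the common shape; transposing reverses this to $\pi\unrhd\nu$, and the ``hence'' clause is then immediate from antisymmetry of $\unrhd$ and the converse, once more via Lemma~\ref{DR,7.2}.

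The main obstacle is the compatibility lemma invoked above: that the super Schensted insertion, with its two different (column- versus row-strict) bumping conventions, commutes with a single consistent standardisation of the hook alphabet, and the attendant symmetry $\mathrm{RSKs}(A^t)=(\sfq(A),\sfp(A))$. In the purely even case these are classical facts about Knuth equivalence, but in the super setting one must verify that the two conventions can be reconciled under one standardisation order and that the resulting ``super-plactic'' relations reduce to the classical Knuth relations after standardisation; this is the genuinely new combinatorial content, and once it is in place, (1)--(3) are formal consequences of \cite{KL} and Lemma~\ref{DR,7.2}.
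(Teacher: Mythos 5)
The paper itself gives no proof of this lemma: it is imported verbatim from Du--Rui \cite{DR} (their 4.7 and 7.3), so there is no in-paper argument to compare against. Your outline follows what is essentially the strategy one would expect (and what [DR] carries out): a super RSK insertion on the two-line array of $A$, plus a reduction of the cell statements to the classical Kazhdan--Lusztig cell theory of $\fS_r$ through the parametrisation $A=\jmath^{+,-}(\la,y,\mu)$. However, as written it is not a proof, because the step you yourself label ``the main obstacle'' is exactly the mathematical content of the lemma and is left unverified. Concretely, you still need: (i) that the standardisation of the two-line array of $A$ (equal even letters left-to-right, equal odd letters right-to-left) is precisely the word of the distinguished representative $y\in\sD^{+,-}_{\la\mu}$ --- the mixed longest/shortest convention in $\sD^{+,-}_{\la\mu}$ is what this must match, and it is where convention errors typically enter; (ii) that super insertion, with its two bumping conventions, commutes with this single standardisation, so that $(\sfp(A),\sfq(A))$ really is the destandardisation of $(P_{\mathrm{cl}}(y),Q_{\mathrm{cl}}(y))$ and, in particular, the destandardisation of $Q_{\mathrm{cl}}(y)$ with respect to $\co(A)$ is again super-semistandard; and (iii) the symmetry $\mathrm{RSKs}(A^t)=(\sfq(A),\sfp(A))$ needed for (2). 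None of these is routine for the hook alphabet, and without them parts (1)--(3) do not follow.

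Three smaller corrections. First, $\leq_L$ and $\leq_R$ on $M(m|n,r)$ are \emph{defined} combinatorially via the Kazhdan--Lusztig preorders on $\fS_r$, so Lemma \ref{DR,7.2} plays no role in (1) and (2); invoking it for a ``reverse inclusion'' conflates the combinatorial definition with an algebraic one. Second, the two-sided cells of $\fS_r$ are \emph{not} totally ordered by dominance once $r\geq 6$; what you need (and what is true) is only the implication $y\leq_{LR}w\Rightarrow \sh(P_{\mathrm{cl}}(y))\unlhd\sh(P_{\mathrm{cl}}(w))$, which after transposing gives $\pi\unrhd\nu$. Third, the backward implication in the ``hence'' clause of (3) --- equal shapes force $A\sim_{LR}B$ --- requires lifting a chain of elementary $\leq_L/\leq_R$ steps from $\fS_r$ to $M(m|n,r)$ while respecting the row/column constraints imposed at each step of the definition; this is asserted but not addressed.
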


For $\pi\in \Pi(r)_{m|n}$, let
$$I(\pi)=\bigcup_{\lambda\in\Lambda(m|n,r)}\mathbf{T}^{su}(\pi,\lambda).$$

By the  super RSK correspondence, if  $A\overset{\text{RSKs}}\longrightarrow(\sfS,\sfT)\in I(\pi)$, we relabel the basis element $\Xi'_A$ as $\Xi^{\prime\pi}_{S,T}:=\Xi'_A$.

\begin{lemma}[{\cite[7.4]{DR}}]\label{DR 7.4}
The $\mathbb{Q}(\up)$-basis for $\bsS(m|n,r)$
$$\{\Xi^{\prime\pi}_{\sfS,\sfT}\mid \pi\in \La^+(r)_{m|n},\sfS,\sfT\in I(\pi)\}=\{\Xi'_A\mid A\in M(m|n,r)\}.$$
is a cellular basis in the sense of  \cite{GL}.
\end{lemma}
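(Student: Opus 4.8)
The plan is to exhibit a Graham--Lehrer cell datum $(\Lambda,M,C,*)$ on $\bsS(m|n,r)$ whose basis is $\{\Xi^{\prime\pi}_{\sfS,\sfT}\}$, using the super RSK correspondence of Lemma~\ref{DR,7.3} for the combinatorial indexing and Lemma~\ref{DR,7.2} for the straightening rule. Identify $\Lambda=\Pi(r)_{m|n}$ with the set of two-sided cells of $\bsS(m|n,r)$ (a partition $\pi$ corresponding to the cell of all $A$ with $\sfp(A)$ of $\pi$-shape, by Lemma~\ref{DR,7.3}(3)), ordered by $\leq_{LR}$; take $M(\pi)=I(\pi)$; and set $C^{\pi}_{\sfS,\sfT}:=\Xi^{\prime\pi}_{\sfS,\sfT}=\Xi'_A$ where $\mathrm{RSKs}(A)=(\sfS,\sfT)$. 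For the anti-involution I would take the transpose $\tau_r$ of Remark~\ref{bar compatibility}(2). The first thing to check is that $\tau_r$ preserves this basis with $\tau_r(\Xi'_A)=\Xi'_{A^t}$: since $\tau_r(\xi_A)=\xi_{A^t}$ and the scalar $\up^{l(w_{0,\beta})}P_{\fS_\beta}(\up^{-2})$ defining $\xi'_A$ depends only on the conjugacy type of the odd parabolic $\fS_\beta$ of \eqref{beta} --- which is conjugated by $d$, hence unchanged, on passing to $A^t$ --- we have $\tau_r(\xi'_A)=\xi'_{A^t}$; as $\tau_r$ also commutes with the bar involution \eqref{bar on S}, it carries the bar-invariant $\Xi'_A$, whose leading term is $\xi'_A$, to the bar-invariant with leading term $\xi'_{A^t}$, forcing $\tau_r(\Xi'_A)=\Xi'_{A^t}$. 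Combined with the super analogue of the transpose symmetry of the RSK correspondence, $(\sfp(A^t),\sfq(A^t))=(\sfq(A),\sfp(A))$, this yields $\tau_r(\Xi^{\prime\pi}_{\sfS,\sfT})=\Xi^{\prime\pi}_{\sfT,\sfS}$, as a cell datum requires.

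Next I would verify the straightening axiom: for $a=\Xi'_A$ and $B$ with $\mathrm{RSKs}(B)=(\sfS,\sfT)$ lying in the two-sided cell $\mathbf{c}_\pi$, Lemma~\ref{DR,7.2} gives $a\,\Xi'_B=\sum_C f_{A,B,C}\,\Xi'_C$ with $f_{A,B,C}\neq 0\Rightarrow C\leq_L B$, so every contributing $C$ lies in a two-sided cell $\leq_{LR}\mathbf{c}_\pi$; applying $\tau_r$ together with the companion bound $C\leq_R A$ shows that the span $\bsS^{<\pi}$ of all $\Xi'_C$ with $C$ strictly below $\mathbf{c}_\pi$ is a two-sided ideal. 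Modulo $\bsS^{<\pi}$ only the $\Xi'_C$ with $C\in\mathbf{c}_\pi$ survive; for those, $C\leq_L B$ and $C\sim_{LR}B$ force $C\sim_L B$ by the superanalogue of Lusztig's lemma that $\leq_L$ is trivial within a two-sided cell (a consequence of the cell theory of \cite{DR}), whence $\sfq(C)=\sfT$ by Lemma~\ref{DR,7.3}(1). Thus $a\,\Xi^{\prime\pi}_{\sfS,\sfT}\equiv\sum_{\sfS'\in I(\pi)}r_a(\sfS',\sfS)\,\Xi^{\prime\pi}_{\sfS',\sfT}\pmod{\bsS^{<\pi}}$, with $r_a(\sfS',\sfS):=f_{A,B,C}$ for the $C$ such that $\mathrm{RSKs}(C)=(\sfS',\sfT)$, and $\bsS^{<\pi}$ is precisely the down-set ideal required by \cite{GL}.

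The one genuinely substantive point that then remains is that $r_a(\sfS',\sfS)$ is independent of $\sfT$, i.e.\ that left multiplication on the $\pi$-subquotient sees only the left-cell label; this I regard as the main obstacle. I expect to settle it either by a direct computation of the surviving structure constants $f_{A,B,C}$ (those with $B,C$ in one right cell of $\sfp$-shape $\pi$) in terms of the Kazhdan--Lusztig combinatorics of $\sH$, checking that they depend only on the $\sfp$-components of $A,B,C$ and not on the shared $\sfq$-component, or --- more structurally --- by realising $\bsS(m|n,r)=\End_\sH(\fT(m|n,r))$ as the endomorphism algebra of a cell-filtered module over the (cellular) type-$A$ Hecke algebra $\sH$, so that the standard construction of an induced cellular basis on such an endomorphism algebra applies, and then checking via \cite{DR} that the induced basis is exactly $\{\Xi'_A\}$. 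With this in hand, $(\Lambda,M,C,\tau_r)$ is a Graham--Lehrer cell datum, so $\{\Xi^{\prime\pi}_{\sfS,\sfT}\}$ is a cellular basis in the sense of \cite{GL}, as claimed.
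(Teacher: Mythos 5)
This lemma is quoted verbatim from \cite[7.4]{DR}: the paper under review gives no proof of its own, so the only meaningful comparison is with the proof in \cite{DR}, which proceeds by inducing cellularity from the Kazhdan--Lusztig cell structure of $\sH$ acting on $\fT(m|n,r)$ --- essentially your route (b). Your overall architecture (cell datum with $\Lambda=\Pi(r)_{m|n}$ ordered via Lemma~\ref{DR,7.3}(3), $M(\pi)=I(\pi)$, $C^{\pi}_{\sfS,\sfT}=\Xi'_A$, the anti-involution $\tau_r$ with $\tau_r(\Xi'_A)=\Xi'_{A^t}$, and the straightening bound from Lemma~\ref{DR,7.2}) is the right one, and your verification that the normalising scalar $\up^{l(w_{0,\beta})}P_{\fS_\beta}(\up^{-2})$ is unchanged under transposition (the two parabolics being conjugate) is correct.

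As a proof, however, the proposal has a genuine gap which you yourself flag: the Graham--Lehrer axiom that $r_a(\sfS',\sfS)$ is independent of $\sfT$ is never established --- you only list two candidate strategies (``I expect to settle it either by \ldots or \ldots'') without carrying either out. This is not a routine verification; it is precisely what separates a basis admitting a two-sided-cell filtration from a cellular basis, and it is where \cite{DR} does the real work, via the realisation $\bsS(m|n,r)=\End_{\sH}(\fT(m|n,r))$ and the known cellular structure of the type-$A$ Hecke algebra. A secondary unproved step is your appeal to ``the superanalogue of Lusztig's lemma'' that $C\leq_L B$ together with $C\sim_{LR}B$ forces $C\sim_L B$: Lemma~\ref{DR,7.3} as quoted characterises the equivalence relations $\sim_L,\sim_R,\sim_{LR}$ via $\text{RSKs}$ but says nothing about the compatibility of the preorder $\leq_L$ with two-sided cells, so this must be imported explicitly from \cite{KL}/\cite{DR} rather than asserted. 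Until both points are supplied, the sketch establishes only the two-sided-cell filtration of $\bsS(m|n,r)$, not cellularity in the sense of \cite{GL}.
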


The cellular basis defines cell modules $C(\pi),\pi\in\Pi(r)_{m|n}$ (see \cite{GL} or \cite[(C.6.3)]{DDPW}). Since
$\bsS(m|n,r)$ is semisimple, all $C(\pi)$ are irreducible.

\begin{theorem} \label{ID}
As a $\bfU(\mathfrak{gl}_{m|n})$-module via $\eta_r:\bfU(\mathfrak{gl}_{m|n})\to \bsS(m|n,r)$, $C(\pi)\cong L(\tilde\pi)$, where $\tilde\pi$ is defined in \eqref{pitilde}.
\end{theorem}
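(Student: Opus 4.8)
The plan is to realise $C(\pi)$ as a highest weight module for $\bfU(\mathfrak{gl}_{m|n})$ and to read off its highest weight. Since $\bsS(m|n,r)$ is semisimple and $\eta_r$ is surjective, each cell module $C(\pi)$ is a simple $\bsS(m|n,r)$-module, hence becomes a simple $\bfU(\mathfrak{gl}_{m|n})$-module when inflated via $\eta_r$; by Lemma~\ref{inflate} it is a polynomial representation at level $r$, and the simple polynomial representations at level $r$ are exactly these inflations. Thus $C(\pi)\cong L(\lambda)$ for a unique $\lambda\in\Lambda^+(m|n,r)$, and it remains to show $\lambda=\tilde\pi$. Observe first that $\tilde\pi\in\Lambda^+(m|n,r)$: the tuple $\tilde\pi^{(0)}=(\pi_1,\dots,\pi_m)$ is weakly decreasing because $\pi$ is a partition, and $\tilde\pi^{(1)}=(\pi_{m+1},\dots,\pi_{m+n})^t$ is weakly decreasing because it is the transpose of a partition.

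Next I would pin down the weight structure of $C(\pi)$. Relabelling the cellular basis $\{\Xi'_A\}$ of Lemma~\ref{DR 7.4} via the super RSK correspondence of Lemma~\ref{DR,7.3}, $C(\pi)$ acquires a basis $\{c_\sfS\mid\sfS\in I(\pi)\}$ indexed by the semistandard $\pi$-supertableaux. Combining the identity $\xi_\mu\Xi'_A=\delta_{\mu,\ro(A)}\Xi'_A$ (valid in any $\up$-Schur superalgebra) with $\eta_r(\sfK_i)=\sum_\mu\up_i^{\mu_i}\xi_\mu$ shows that $c_\sfS$ is a weight vector of weight equal to the content of $\sfS$; hence $\dim C(\pi)_\mu=|\mathbf{T}^{su}(\pi,\mu)|$ and $\mathrm{wt}(C(\pi))=\{\mu\mid\mathbf{T}^{su}(\pi,\mu)\neq\emptyset\}$. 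In particular $\tilde\pi$ is a weight of $C(\pi)$, witnessed by $\sfT_\pi$, and $\dim C(\pi)_{\tilde\pi}=1$.

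The key step is the combinatorial claim that $\tilde\pi$ is maximal in $\mathrm{wt}(C(\pi))$ with respect to $\unlhd$, i.e.\ that $\mathbf{T}^{su}(\pi,\mu)\neq\emptyset$ implies $\mu\unlhd\tilde\pi$. This is the super analogue of the classical Kostka dominance $K_{\pi\mu}\neq0\Rightarrow\mu\unlhd\pi$, and I would prove it by the usual partial-sum argument: for $t\le m$ the boxes of a semistandard $\pi$-supertableau carrying an entry $\le t$ form, by column-strictness of the entries $1,\dots,m$, a subdiagram of $\pi$ with at most $t$ boxes in each column, so $\mu_1+\cdots+\mu_t\le\pi_1+\cdots+\pi_t=\tilde\pi_1+\cdots+\tilde\pi_t$; the complementary estimates for $t>m$ follow dually from the row-strictness of the entries $m+1,\dots,m+n$ and the definition $\tilde\pi^{(1)}=(\pi_{m+1},\dots,\pi_{m+n})^t$ in \eqref{pitilde}. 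Since $|\mu|=|\tilde\pi|=r$, these inequalities are exactly the condition $\tilde\pi-\mu\in\sum_h\mathbb N\alpha_h$. (Alternatively one may quote the relevant tableau combinatorics from \cite{DR}.) I expect this dominance statement to be the only non-formal ingredient; the rest is bookkeeping once the weight multiplicities are in hand.

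Finally I would conclude. Choose $0\neq v\in C(\pi)_{\tilde\pi}$. For each $h$, $\eta_r(\sfE_h)v$ lies in $C(\pi)_{\tilde\pi+\alpha_h}$, which vanishes by maximality of $\tilde\pi$; thus $v$ is a highest weight vector of weight $\tilde\pi$, and since $C(\pi)$ is simple we get $\bfU(\mathfrak{gl}_{m|n})v=C(\pi)$. A simple module generated by a highest weight vector of weight $\tilde\pi\in\Lambda^+(m|n,r)$ is necessarily a quotient of the Kac module $K(\tilde\pi)$ (the even and odd positive root vectors all kill $v$, and $v$ generates a copy of $L^{\bar0}(\tilde\pi)$ under $\bfU_{\bar0}$), hence equals $L(\tilde\pi)$. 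Therefore $\lambda=\tilde\pi$ and $C(\pi)\cong L(\tilde\pi)$, as claimed.
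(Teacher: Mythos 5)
Your proposal is correct and follows essentially the same route as the paper: identify the weights of $C(\pi)$ with contents of semistandard $\pi$-supertableaux, observe that $v_{\sfT_\pi}$ has weight $\tilde\pi$, and establish that $\tilde\pi$ dominates every content $\mu$ with $\mathbf{T}^{su}(\pi,\mu)\neq\emptyset$ by the partial-sum argument split at $s=m$ (column-strictness for entries $\le m$, the dual row-strictness for entries $>m$). The only cosmetic differences are that you invoke semisimplicity up front to get $C(\pi)\cong L(\lambda)$ and then pin down $\lambda$, and you spell out the Kac-module step at the end, whereas the paper concludes directly once $\tilde\pi$ is shown to be the highest weight.
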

\begin{proof} By the construction, for any fixed $\sfQ\in I(\pi)$, $C(\pi)$ is spanned by $v_\sfS:=\Xi'_{\sfS,\sfQ}+\bsS^{\rhd\pi}$, $\sfS\in I(\pi)$, where $\bsS^{\rhd\pi}$ is spanned by all $\Xi'_A$ with $\sh(\sfp(A))\rhd\pi$.
Let $v_{\tilde\pi}=v_{\sfT_\pi}$. Then the weight of $v_{\tilde\pi}$ is $\tilde \pi$. We now prove that $\tilde\pi$ is the highest weight. It suffices to prove that if $\mathbf{T}^{su}(\pi,\mu)\neq\emptyset$ then $\tilde\pi\unrhd\mu$.

Let $\sfT\in \mathbf{T}^{su}(\pi,\mu)$ and, for $s\in[1,m+n]$, let $\sfT_{\leq s}$ be the subtableau obtained by removing the entries $>s$ and their associated boxes from $\sfT$. If $s\leq m$, then it is known that $\tilde\pi_1+\cdots+\tilde\pi_s\geq\mu_1+\cdots+\mu_s$ (see, e.g., \cite[Lem.~8.42]{DDPW}). Assume now $s>m$. Let $\sfT'_{\leq s}$ be the subtableau consists of top $m$ row of $\sfT_{\leq s}$ and $\sfT_{\leq s}''$ the subtableau obtained by removing $\sfT'_{\leq s}$ from $\sfT_{\leq s}$. We also break $\sfT_\pi$ into two parts $\sfT_{\pi,\leq s}'$ and $\sfT_{\pi,\leq s}''$. Then, by definition, the shape of $\sfT_{\leq s}'$ must be contained in $\sfT_{\pi,\leq s}'$, while the shape of $\sfT_{\leq s}''$ must be contained in $\sfT_{\pi,\leq s}''$. Hence, $\tilde\pi_1+\cdots+\tilde\pi_s\geq\mu_1+\cdots+\mu_s$. This proves the inequality for all $s\ge0$. Hence, $\tilde\pi\unrhd\mu$.
\end{proof}

\begin{remark}\label{remark}
 Unlike the nonsuper case, the cellular basis $\{\Xi'_A\mid A\in M(m|n,r)\}$ does not canonically induce a basis for $C(\pi)$. In other words, the set $\{\Xi'_A\cdot v_{\tilde\pi}\mid A\in M(m|n,r)\}\setminus\{0\}$ does not form a basis for the
cell module $C(\pi)$. This can be seen as follows. Suppose $\Xi'_A=\Xi^{\prime\nu}_{\sfS,\sfT}$. Then
$$0\neq \Xi'_A\cdot v_{\tilde\pi}=\Xi^{\prime\nu}_{\sfS,\sfT}\Xi^{\prime\pi}_{\sfT_\pi,\sfQ}+\bsS^{\rhd\pi}=\sum_{C}f_{C}(A,\sfT_\pi)\Xi^{\prime\pi}_{C}+\bsS^{\rhd\pi}$$ implies $\co(A)=\tilde\pi$, $\tilde\nu\unrhd\tilde\pi$ by the proof above, and $C\leq_RA$ by Lemma \ref{DR,7.2}. Hence,
$\pi\unrhd\nu$ by Lemma \ref{DR,7.3}(3). Thus, we must have $\tilde\pi^{(0)}=\tilde\nu^{(0)}$ and $\tilde\pi^{(1)}\unlhd\tilde\nu^{(1)}$ (equivalently, $\pi^{(1)}\unrhd\nu^{(1)}$). Cosequently, we do not have $\nu=\pi$ in general unless $n=1$ and so the cardinality of the set could be larger than $\dim C(\pi)$.
\end{remark}

\begin{corollary}\label{primitive vector} We have
$\sS(m|n,r)^+v_{\tilde{\pi}}=0$. In other words, by regarding $C(\pi)$ as a $\bfU(\mathfrak{gl}_{m|n})$-module, $v_{\tilde\pi}$ is a primitive vector.
\end{corollary}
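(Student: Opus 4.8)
The plan is to show that $v_{\tilde\pi}$ is killed by each generator $\sfE_h$ of the positive part, from which $\sS(m|n,r)^+v_{\tilde\pi}=0$ follows formally; the key input is the highest-weight estimate already established inside the proof of Theorem~\ref{ID}. For the reduction, recall that $\sS(m|n,r)^+$ is, modulo the identity, spanned by the elements $A(\mathbf 0,r)$ with $0\neq A\in M(m|n)^+_{\leq r}$; by the triangular relation \eqref{monomial} and the definition $\sfE_h^{(l)}=\sfE_h^l/[l]_{\up_h}^!$, it coincides with the (non-unital) subalgebra of $\bsS(m|n,r)$ generated by the $\eta_r(\sfE_h)=E_{h,h+1}(\mathbf 0,r)$, $h\in[1,m+n)$. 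Since every word in these generators ends on the right with a single $\sfE_h$, it is enough to show $\sfE_h v_{\tilde\pi}=0$ for all $h$.

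Next I would set up weight spaces. As observed in the proof of Lemma~\ref{inflate}, every $\bsS(m|n,r)$-module $M$ decomposes as $M=\bigoplus_{\la\in\Lambda(m|n,r)}M_\la$ with $M_\la=[\diag(\la)]M$; in particular this holds for $C(\pi)$, and relation (QS2) of Definition~\ref{sgroup} gives, for $x\in C(\pi)_\la$, that $\sfK_a(\sfE_h x)=\up^{\bse_a\centerdot_s\alpha_h}\sfE_h(\sfK_a x)=\up^{\bse_a\centerdot_s(\la+\alpha_h)}\sfE_h x$, so $\sfE_h C(\pi)_\la\subseteq C(\pi)_{\la+\alpha_h}$. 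From the construction of $C(\pi)$ (its basis $\{v_\sfS\mid\sfS\in I(\pi)\}$, with $v_\sfS$ of weight equal to the content of $\sfS$) together with the computation in the proof of Theorem~\ref{ID}, $v_{\tilde\pi}=v_{\sfT_\pi}$ has weight $\tilde\pi$, and the weights of $C(\pi)$ are exactly the contents occurring in $I(\pi)$, all satisfying $\tilde\pi\unrhd\mu$ --- this last being precisely the implication $\bfT^{su}(\pi,\mu)\neq\emptyset\Rightarrow\tilde\pi\unrhd\mu$ proved there.

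Putting these together, $\sfE_h v_{\tilde\pi}$ lies in $C(\pi)_{\tilde\pi+\alpha_h}$, but $\tilde\pi+\alpha_h$ is not a weight of $C(\pi)$: either it fails to lie in $\mathbb N^{m+n}$ (when $\tilde\pi_{h+1}=0$), or else $\sum_{i\leq h}(\tilde\pi+\alpha_h)_i=1+\sum_{i\leq h}\tilde\pi_i$ forces $\tilde\pi+\alpha_h\not\unlhd\tilde\pi$; in either case $C(\pi)_{\tilde\pi+\alpha_h}=0$. Hence $\sfE_h v_{\tilde\pi}=0$ for every $h$, so $\sS(m|n,r)^+v_{\tilde\pi}=0$ and $v_{\tilde\pi}$ is primitive. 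I do not expect a genuine obstacle; the only point needing care is to invoke precisely the dominance bound on $\mathrm{wt}(C(\pi))$ from the proof of Theorem~\ref{ID} rather than only the isomorphism $C(\pi)\cong L(\tilde\pi)$. As an alternative requiring no new computation, one may instead use that $L(\tilde\pi)$ is a quotient of the Kac module $K(\tilde\pi)=\bfU\otimes_{\bfU_{\bar0}\bfU^+_{\bar1}}L^{\bar0}(\tilde\pi)$, in which $1\otimes(\text{a highest weight vector of }L^{\bar0}(\tilde\pi))$ is primitive of weight $\tilde\pi$ (recall $E_{a,b}$ with $a\leq m<b$ acts as $0$ on $L^{\bar0}(\tilde\pi)$), and that $\dim K(\tilde\pi)_{\tilde\pi}=1$, hence $\dim C(\pi)_{\tilde\pi}=1$, so that $v_{\tilde\pi}$ is a nonzero scalar multiple of the image of that vector.
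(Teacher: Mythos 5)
Your proof is correct and is essentially the paper's argument: the paper likewise notes that $E_{h,h+1}(\mathbf 0,r)\cdot v_{\tilde\pi}$ lies in the weight space of weight $\tilde\pi+\bse_h-\bse_{h+1}\rhd\tilde\pi$ and concludes it vanishes by the highest-weight bound established in the proof of Theorem~\ref{ID}. Your version merely spells out the reduction to the generators $\sfE_h$ and adds an optional Kac-module alternative, neither of which changes the substance.
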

\begin{proof} We first observe that, if $\co(E_{h,h+1}+\diag(\la))=\tilde\pi$, i.e., $\la+\bse_{h+1}=\tilde\pi$, then $\la=\tilde\pi-\bse_{h+1}$. Since  $E_{h,h+1}(\bfl,r)\cdot v_{\tilde\pi}=[E_{h,h+1}+\diag(\la)]v_{\tilde\pi}$ has weight $\tilde\pi+\bse_h-\bse_{h+1}$ and $\tilde\pi+\bse_h-\bse_{h+1}\rhd\tilde\pi$, we must have $E_{h,h+1}(\bfl,r)\cdot v_{\tilde\pi}=0$ by the theorem above.
\end{proof}

\begin{corollary}\label{canonical basis of simple module1} Let $M(m|n,r)^{\leq0}=\{(a_{i,j})\in M(m|n,r)\mid a_{i,j}=0\;\forall i<j\}$. Then
the set $\{\Xi_A\cdot v_{\tilde\pi}\mid A\in M(m|n,r)^{\leq0}\}$ spans the cell module $C(\pi)$.
\end{corollary}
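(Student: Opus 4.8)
The plan is to prove that $C(\pi)=\bsS^{\leq 0}(m|n,r)\cdot v_{\tilde\pi}$ and then read off the spanning set from the basis of the Borel subsuperalgebra $\bsS^{\leq 0}(m|n,r)$ recorded in Theorem~\ref{canonical basis of two parts} and the paragraph preceding it.

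First I would observe that $\bsS(m|n,r)\cdot v_{\tilde\pi}$ is a nonzero submodule of $C(\pi)$, hence equals $C(\pi)$, since $\bsS(m|n,r)$ is semisimple and $C(\pi)$ is irreducible. Next, from the surjectivity of $\eta_r$ and the triangular decomposition $\bfU(\mathfrak{gl}_{m|n})=\bfU^-\bfU^0\bfU^+$ one obtains $\bsS(m|n,r)=\bsS^{\leq 0}(m|n,r)\cdot\bsS^{+}(m|n,r)$, where $\bsS^{+}(m|n,r)=\eta_r(\bfU^+)$ is the subalgebra generated by the $E_{h,h+1}(\bfl,r)$. Since the augmentation ideal of $\bfU^+$ is the right ideal $\sum_h\bfU^+\sfE_h$, Corollary~\ref{primitive vector} --- which says that each $E_{h,h+1}(\bfl,r)$ kills $v_{\tilde\pi}$ --- gives $\bsS^{+}(m|n,r)\cdot v_{\tilde\pi}=\mathbb Q(\up)v_{\tilde\pi}$. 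Combining these, $C(\pi)=\bsS^{\leq 0}(m|n,r)\cdot v_{\tilde\pi}$.

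Finally I would invoke Theorem~\ref{canonical basis of two parts}, by which $\mathscr C_r^{\leq 0}=\{\Xi_{A_\la}\mid A\in M(m|n)^-,\,|A|\le r,\,\la\in\Lambda(m|n,r),\,\bsfh(A)\le\la\}$ is a $\mathbb Q(\up)$-basis of $\bsS^{\leq 0}(m|n,r)$. Using $\bsfh(A)=\co(A)$ for $A\in M(m|n)^-$, the assignment $(A,\la)\mapsto A_\la=A+\diag(\la-\co(A))$ is a bijection onto $M(m|n,r)^{\leq 0}=\{(a_{i,j})\in M(m|n,r)\mid a_{i,j}=0\ \forall\,i<j\}$; hence $\mathscr C_r^{\leq 0}=\{\Xi_A\mid A\in M(m|n,r)^{\leq 0}\}$. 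Applying these operators to $v_{\tilde\pi}$ and using the previous paragraph shows that $\{\Xi_A\cdot v_{\tilde\pi}\mid A\in M(m|n,r)^{\leq 0}\}$ spans $C(\pi)$, which is the assertion. The only step requiring any care is the factorisation $\bsS(m|n,r)=\bsS^{\leq 0}(m|n,r)\,\bsS^{+}(m|n,r)$ together with the index bookkeeping translating $\mathscr C_r^{\leq 0}$ into $M(m|n,r)^{\leq 0}$; I do not anticipate a genuine obstacle, since all the substantive content is already contained in Corollary~\ref{primitive vector} and Theorem~\ref{canonical basis of two parts}.
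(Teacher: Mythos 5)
Your argument is correct and is precisely the deduction the paper intends (the corollary is stated without proof there): irreducibility of $C(\pi)$ gives $C(\pi)=\bsS(m|n,r)v_{\tilde\pi}$, the triangular factorisation $\bsS(m|n,r)=\sS^{\leq0}(m|n,r)\,\sS^{+}(m|n,r)$ (which is also immediate from Corollary~\ref{PBW for S}) together with Corollary~\ref{primitive vector} reduces this to $\sS^{\leq0}(m|n,r)v_{\tilde\pi}$, and Theorem~\ref{canonical basis of two parts} identifies $\{\Xi_B\mid B\in M(m|n,r)^{\leq0}\}$ as a basis of $\sS^{\leq0}(m|n,r)$ via the bijection $(A,\la)\mapsto A_\la$. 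Your index bookkeeping ($\bsfh(A)=\co(A)$ for strictly lower triangular $A$, so $\la=\co(A_\la)$) is accurate, so there is no gap.
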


It would be interesting to extract a basis for $C(\pi)$ from this spanning set in some ``canonical'' way. This is because such a basis can also be induced from the canonical basis $\mathscr C^-$ of $U^-_\sZ(\mathfrak{gl}_{m|n})$ as shown in the following result.

For $\pi\in\Pi(r)_{m|n}$, define a subset $M(m|n,\pi)$ of $M(m|n,r)^{\leq0}$ by the condition that
 $$\{\Xi_A\cdot v_{\tilde\pi}\mid A\in M(m|n,\pi)\}\text{ forms a basis for $C(\pi)$.}$$
  Note that $\co(A)=\tilde\pi$ for $A\in M(m|n,\pi)$. Recall  the notation $A_\la$ defined in \eqref{Ala}.

\begin{theorem}\label{ngeq1}
Let $\mathscr C^-$ be the canonical basis for $U^-_\sZ(\mathfrak{gl}_{m|n})$ and let $L(\mu)$ be a simple polynomial representations of $\bfU(\mathfrak{gl}_{m|n})$ at level $r\ge0$. Then there exists a partition $\pi\in\Pi(r)_{m|n}$ such that $\mu=\tilde\pi$ and
$$\{\sfC_A\cdot v_{\tilde\pi}\mid A\in M(m|n)^-, A_{\tilde\pi}\in M(m|n,\pi)\}$$
forms a basis for $L(\mu)$.
\end{theorem}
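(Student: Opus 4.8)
The plan is to deduce the statement from Theorem~\ref{canonical basis of two parts} after identifying $L(\mu)$ with a cell module of $\bsS(m|n,r)$. I would first pin down $\pi$. Since $L(\mu)$ is a simple polynomial representation at level $r$, Lemma~\ref{inflate} shows it is inflated via $\eta_r$ from a simple $\bsS(m|n,r)$-module; as $\bsS(m|n,r)$ is semisimple and the cell modules $\{C(\nu)\mid\nu\in\Pi(r)_{m|n}\}$ form a complete irredundant list of its irreducibles, that module is $C(\pi)$ for a unique $\pi\in\Pi(r)_{m|n}$, and Theorem~\ref{ID} gives $C(\pi)\cong L(\tilde\pi)$ as $\bfU$-modules. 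Hence $\mu=\tilde\pi$, and from now on I identify $L(\mu)$ with $C(\pi)$ and work with the highest weight vector $v_{\tilde\pi}\in C(\pi)$ used to define $M(m|n,\pi)$.

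The heart of the argument is the evaluation of $\sfC_A\cdot v_{\tilde\pi}$ for those $A\in M(m|n)^-$ with $A_{\tilde\pi}\in M(m|n,\pi)$. Such an $A$ satisfies $\bsfh(A)=\co(A)\le\tilde\pi$, hence $|A|=|\bsfh(A)|\le|\tilde\pi|=r$, so Theorem~\ref{canonical basis of two parts} applies to $\sfc_A=\eta_r(\sfC_A)$. Because $v_{\tilde\pi}$ lies in the weight space $[\diag(\tilde\pi)]\,C(\pi)$ while $\sum_{\nu}[\diag(\nu)]$ is the identity of $\bsS(m|n,r)$, we get $\sfC_A\cdot v_{\tilde\pi}=\sfc_A\,v_{\tilde\pi}=\sfc_A[\diag(\tilde\pi)]\,v_{\tilde\pi}$; and $\sfc_A[\diag(\tilde\pi)]=(-1)^{\overline{A_{\tilde\pi}}}\Xi_{A_{\tilde\pi}}$ by (the proof of) Theorem~\ref{canonical basis of two parts}. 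Therefore $\sfC_A\cdot v_{\tilde\pi}=(-1)^{\overline{A_{\tilde\pi}}}\,\Xi_{A_{\tilde\pi}}\cdot v_{\tilde\pi}$.

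To finish, I would note that $A\mapsto A_{\tilde\pi}$ restricts to a bijection from $\{A\in M(m|n)^-\mid A_{\tilde\pi}\in M(m|n,\pi)\}$ onto $M(m|n,\pi)$: it is injective since it merely adds a fixed diagonal, and it is surjective since any $B\in M(m|n,\pi)\subseteq M(m|n,r)^{\le0}$ has $\co(B)=\tilde\pi$, so $B=A_{\tilde\pi}$ where $A$ is the strictly lower triangular part of $B$, which lies in $M(m|n)^-$ and has $\bsfh(A)\le\tilde\pi$. By the defining property of $M(m|n,\pi)$ the set $\{\Xi_B\cdot v_{\tilde\pi}\mid B\in M(m|n,\pi)\}$ is a basis of $C(\pi)$; combined with the previous paragraph, $\{\sfC_A\cdot v_{\tilde\pi}\mid A_{\tilde\pi}\in M(m|n,\pi)\}$ equals $\{(-1)^{\overline{A_{\tilde\pi}}}\Xi_{A_{\tilde\pi}}\cdot v_{\tilde\pi}\mid A_{\tilde\pi}\in M(m|n,\pi)\}$, which differs from that basis only by the signs $(-1)^{\overline{A_{\tilde\pi}}}=\pm1$ and is therefore again a basis of $C(\pi)=L(\tilde\pi)=L(\mu)$. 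I expect the only genuinely non-formal point to be the identity $\sfc_A\,v_{\tilde\pi}=(-1)^{\overline{A_{\tilde\pi}}}\Xi_{A_{\tilde\pi}}\cdot v_{\tilde\pi}$: it rests on Theorem~\ref{canonical basis of two parts} together with the elementary fact that $\Xi_{B}$ is right-annihilated by $[\diag(\nu)]$ unless $\nu=\co(B)$, so that only the $\la=\tilde\pi$ summand of $\eta_r(\sfC_A)=\sum_{\bsfh(A)\le\la}(-1)^{\overline{A_\la}}\Xi_{A_\la}$ survives on $v_{\tilde\pi}$; everything else is bookkeeping.
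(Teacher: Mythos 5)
Your proposal is correct and follows essentially the same route as the paper: identify $L(\mu)$ with the cell module $C(\pi)$ via Lemma~\ref{inflate} and Theorem~\ref{ID}, collapse $\sfc_A\cdot v_{\tilde\pi}=\sum_{\la\geq\bsfh(A)}(-1)^{\overline{A_\la}}\Xi_{A_\la}\cdot v_{\tilde\pi}$ to the single term $(-1)^{\overline{A_{\tilde\pi}}}\Xi_{A_{\tilde\pi}}\cdot v_{\tilde\pi}$ using Theorem~\ref{canonical basis of two parts}, and conclude from the definition of $M(m|n,\pi)$. You merely make explicit two points the paper leaves implicit (the weight-idempotent argument for the collapse and the bijection $A\mapsto A_{\tilde\pi}$), which is fine.
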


\begin{proof} The first assertion follows from Lemma \ref{inflate} and Theorem \ref{ID}. Thus, $L(\mu)\cong C(\pi)$. By Theorem \ref{canonical basis of two parts},
$$\sfC_A\cdot v_{\tilde\pi}=\sfc_A\cdot v_{\tilde\pi}=\sum_{\la:\la\geq\bsfh(A)}(-1)^{\bar A_\la}\Xi_{A_\la}\cdot v_{\tilde\pi}=(-1)^{\bar A_{\tilde\pi}}\Xi_{A_{\tilde\pi}}\cdot v_{\tilde\pi}.$$
The last assertion follows from the definition of $M(m|n,\pi)$.
\end{proof}

We end the paper with a canonical description of $M(m|n,\pi)$ for $n=1$. This case has already been considered in \cite{CHW2} is a natural application of the observation in Remark \ref{remark}.


\begin{theorem}\label{n=1}
For $\pi\in  \Pi(r)_{m|1}$, the set $\{\Xi_A\cdot v_\pi\mid A\in M(m|1,r)\}\setminus\{0\}$ forms a basis for the
cell module $C(\pi)$.
\end{theorem}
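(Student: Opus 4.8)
The plan is to identify $\{\Xi_A\cdot v_\pi\mid A\in M(m|1,r)\}\setminus\{0\}$ with a single nonzero rescaling of the standard basis of the cell module $C(\pi)$, by comparing the basis $\{\Xi_A\}$ of Corollary~\ref{basis Xi} with the cellular canonical basis $\{\Xi'_A\}$ of Lemma~\ref{DR 7.4} on exactly the matrices that can act nontrivially on the highest weight vector $v_\pi=v_{\tilde\pi}$ (note $\tilde\pi=\pi$ when $n=1$). First I would reduce to matrices with $\co(A)=\tilde\pi$: since $\Xi_A\in[A]+\sum_{B<A}\mbq(\up)[B]$ and every $B\le A$ has $\co(B)=\co(A)$, while $[B]=\pm\varphi_B$ annihilates every weight space of weight $\ne\co(B)$ and $v_\pi$ has weight $\tilde\pi$, it follows that $\Xi_A\cdot v_\pi=0$ unless $\co(A)=\tilde\pi$.

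The crucial point is that for such $A$ one in fact has $\Xi_A=\Xi'_A$. Indeed, $\pi\in\Pi(r)_{m|1}$ forces $\tilde\pi_{m+1}\le1$ (all parts of $\tilde\pi^{(1)}=(\pi_{m+1})^t$ are $\le1$; see \eqref{pitilde}); hence every $B\le A$ satisfies $b_{m+1,m+1}\le\co(B)_{m+1}=\tilde\pi_{m+1}\le1$, so the odd part $\fS_\beta$ of the double coset of $B$ (see \eqref{beta}) is the symmetric group on the $b_{m+1,m+1}$ letters sitting in the $(m+1,m+1)$ block, hence trivial. Thus the normalizing factor $\up^{l(w_{0,\beta})}P_{\fS_\beta}(\up^{-2})$ equals $1$ and $\xi'_B=[B]$ for all $B\le A$; since $\Xi_A$ and $\Xi'_A$ are produced from the same poset ideal $\{B\le A\}$, the same basis elements indexed by it, and the same bar involution \eqref{bar on S} and partial order, they coincide. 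Consequently $\{\Xi_A\cdot v_\pi\mid A\in M(m|1,r)\}\setminus\{0\}=\{\Xi'_A\cdot v_\pi\mid \co(A)=\tilde\pi\}\setminus\{0\}$, and I may argue entirely with the cellular basis.

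Next I would invoke the analysis carried out in Remark~\ref{remark}: writing $\Xi'_A=\Xi^{\prime\nu}_{\sfp(A),\sfq(A)}$ via the super Robinson--Schensted correspondence, that argument shows $\Xi'_A\cdot v_\pi\ne0$ implies $\tilde\nu\unrhd\tilde\pi$, $\pi\unrhd\nu$ and $\co(A)=\tilde\pi$; the only obstruction to concluding $\nu=\pi$ was $\pi^{(1)}\rhd\nu^{(1)}$, which cannot occur for $n=1$ since then $\tilde\pi^{(0)}=\tilde\nu^{(0)}$ and $|\pi|=|\nu|=r$ already force $\pi^{(1)}=\nu^{(1)}$, i.e. $\pi=\nu$. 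Hence $\Xi'_A\cdot v_\pi\ne0$ forces $\sh(\sfp(A))=\pi$, and $\co(A)=\tilde\pi$ then gives $\sfq(A)\in\bfT^{su}(\pi,\tilde\pi)=\{\sfT_\pi\}$, i.e. $\sfq(A)=\sfT_\pi$. The cellular multiplication rule (\cite{GL}, Lemma~\ref{DR 7.4}) now yields $\Xi'_A\cdot v_\pi=\kappa_\pi\,v_{\sfp(A)}$, where $\kappa_\pi=\langle\sfT_\pi,\sfT_\pi\rangle_\pi$ is the value on $\sfT_\pi$ of the invariant form of $C(\pi)$ and $\{v_\sfS\mid\sfS\in I(\pi)\}$ is the standard basis of $C(\pi)$. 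This $\kappa_\pi$ must be nonzero, for otherwise $\Xi'_A\cdot v_\pi=0$ for every $A$, i.e. $\bsS(m|1,r)\cdot v_\pi=0$, contradicting that $C(\pi)$ is an irreducible $\bsS(m|1,r)$-module generated by $v_\pi$. Since the super RSK correspondence is a bijection, each $\sfS\in I(\pi)$ equals $\sfp(A)$ for a unique $A$ with $\sfq(A)=\sfT_\pi$ (and then automatically $\co(A)=\tilde\pi$); therefore $\{\Xi_A\cdot v_\pi\mid A\in M(m|1,r)\}\setminus\{0\}=\{\kappa_\pi v_\sfS\mid\sfS\in I(\pi)\}$, a basis of $C(\pi)$. (Spanning alone can alternatively be read off Corollary~\ref{canonical basis of simple module1}.)

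The step I expect to need the most care is the identification $\Xi_A=\Xi'_A$ for $\co(A)=\tilde\pi$: its whole force comes from the observation that for $n=1$ every matrix below such an $A$ has trivial odd parabolic, so the two a priori distinct canonical bases agree precisely on the locus that matters. One should also handle the $\pi$ versus $\pi^t$ identification and the various dominance orderings carefully when transplanting the argument of Remark~\ref{remark} to the case $n=1$, although most of that bookkeeping is already in place there.
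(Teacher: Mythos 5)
Your proposal is correct and follows essentially the same route as the paper's proof: both use the analysis of Remark~\ref{remark} together with $n=1$ to force $\nu=\pi$ and $\sfq(A)=\sfT_\pi$ for any $A$ with $\Xi'_A\cdot v_{\tilde\pi}\neq0$, and both use the triviality of the odd parabolic $\fS_\beta$ (coming from $\pi_{m+1}\le 1$) to identify $\Xi_A$ with $\Xi'_A$ on the relevant matrices. The only immaterial difference is the finishing step: you compute $\Xi'_A\cdot v_{\tilde\pi}=\kappa_\pi v_{\sfp(A)}$ explicitly via the cellular form and deduce $\kappa_\pi\neq0$ from irreducibility, whereas the paper concludes with a dimension comparison; both rest on the irreducibility of $C(\pi)$.
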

\begin{proof} We first claim that the set $\{\Xi'_A\cdot v_\pi\mid A\in M(m|1,r)\}\setminus\{0\}$ forms a basis for $C(\pi)$. Indeed,
assume $\Xi'_A=\Xi^{\prime\nu}_{\sfS,\sfT}$. By  Remark \ref{remark},
$\Xi'_A\cdot v_{\tilde\pi}\neq0$
implies $\co(A)=\tilde\pi$, $\pi\unrhd\nu$ and  $\tilde\pi^{(0)}=\tilde\nu^{(0)}$. Since $|\pi|=|\nu|$ and $n=1$, we must have $\pi_{m+1}=\nu_{m+1}$, forcing $\pi=\nu$ and $\sfT=\sfT_\pi$. Thus, a dimensional comparison proves the claim.

Further, for $\pi\in \Pi(r)_{m|1}$, $\pi_{m+1}\leq 1$. This forces the subgroup $\fS_\beta$ defined in \eqref{beta} is trivial. Hence,  $\xi'_A=\xi_A$. By the argument given around  \cite[Rem.~6.5]{DR},  we also have $\Xi'_A=\Xi_A$ whenever $\co(A)=\tilde\pi$. Now the result follows from the claim above.
\end{proof}

With this theorem, the index set $M(m|1,\pi)$ can have the following canonical description:
 $$\aligned
M(m|1,\pi)&=\{A\in M(m|1,r)\mid \Xi'_A \cdot v_\pi\neq 0\}\\
&=\{A\in M(m|1,r)\mid \sfp(A)\in I(\pi), \sfq(A)=\sfT_\pi\}.
\endaligned  $$
Theorems  \ref{ngeq1} and \ref{n=1} gives immediately the following.
\begin{corollary}\label{CHWConj}
Let $\mathscr C^-=\{\cC_A\mid A\in M(m|1)^-\}$ be the canonical basis for $U^-_\sZ(\mathfrak{gl}_{m|1})$ as given in \eqref{C-} and let $L(\mu)$ be a simple polynomial representations of $\bfU(\mathfrak{gl}_{m|1})$ with highest weight vector $v_\mu$.  Then
$$\{\sfC_A\cdot v_\mu\mid A\in M(m|1)^-\}\setminus \{0\}$$
forms a  basis for $L(\mu)$.
\end{corollary}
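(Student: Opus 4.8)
The plan is to obtain the corollary as an immediate consequence of Theorems \ref{ngeq1} and \ref{n=1}: when $n=1$ the abstract index set $M(m|1,\pi)$ becomes redundant, because every element of the canonical basis $\mathscr C^-$ of $U^-_\sZ(\mathfrak{gl}_{m|1})$ either annihilates the highest weight vector $v_\mu$ or yields one of the basis vectors already produced by Theorem \ref{ngeq1}. So the only thing to do beyond quoting those two theorems is to pin down exactly when $\sfC_A\cdot v_\mu$ vanishes.

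First I would invoke Theorem \ref{ngeq1} to fix a partition $\pi\in\Pi(r)_{m|1}$ with $\mu=\tilde\pi$ and $L(\mu)\cong C(\pi)$, so that $\mathscr B:=\{\sfC_A\cdot v_{\tilde\pi}\mid A\in M(m|1)^-,\ A_{\tilde\pi}\in M(m|1,\pi)\}$ is already known to be a basis of $L(\mu)$ (and $v_\mu=v_{\tilde\pi}$, the one-dimensional highest weight space). It then remains to prove the set equality
$$\{\sfC_A\cdot v_\mu\mid A\in M(m|1)^-\}\setminus\{0\}=\mathscr B.$$
For the inclusion $\mathscr B\subseteq\{\sfC_A\cdot v_\mu\}\setminus\{0\}$ I would simply note that the elements of $\mathscr B$ are nonzero (being a basis) and are of the displayed form by construction. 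For the reverse inclusion I would take $A\in M(m|1)^-$ with $\sfC_A\cdot v_\mu\neq0$; since $\sfC_A$ acts through $\eta_r$, Corollary \ref{image of canonical basis} forces $|A|\leq r$ and $\sfC_A\cdot v_\mu=\sfc_A\cdot v_{\tilde\pi}$. Expanding $\sfc_A$ by Theorem \ref{canonical basis of two parts} as $\sum_{\la\geq\bsfh(A)}(-1)^{\overline{A_\la}}\Xi_{A_\la}$ and using that $v_{\tilde\pi}$ has weight $\tilde\pi$ while $\co(A_\la)=\la$, only the summand with $\la=\tilde\pi$ can survive; hence $\bsfh(A)\leq\tilde\pi$ and $\sfC_A\cdot v_\mu=(-1)^{\overline{A_{\tilde\pi}}}\Xi_{A_{\tilde\pi}}\cdot v_{\tilde\pi}\neq0$. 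Here $A_{\tilde\pi}$ is a genuine element of $M(m|1,r)$ with $\co(A_{\tilde\pi})=\tilde\pi$ (the identity $\sum_i\fh_i(A)=|A|$ gives $|A_{\tilde\pi}|=r$), and since $\pi_{m+1}\leq1$ the subgroup $\fS_\beta$ of \eqref{beta} attached to $A_{\tilde\pi}$ is trivial, so $\xi'_{A_{\tilde\pi}}=\xi_{A_{\tilde\pi}}$ and therefore $\Xi'_{A_{\tilde\pi}}=\Xi_{A_{\tilde\pi}}$, exactly as in the proof of Theorem \ref{n=1}. Thus $\Xi'_{A_{\tilde\pi}}\cdot v_{\tilde\pi}\neq0$, which by the characterization of $M(m|1,\pi)$ recorded just after Theorem \ref{n=1} means $A_{\tilde\pi}\in M(m|1,\pi)$, so $\sfC_A\cdot v_\mu\in\mathscr B$.

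I do not expect a genuine obstacle here: the real content lives in Theorems \ref{ngeq1} and \ref{n=1}, and the corollary is essentially the statement that the obstruction of Remark \ref{remark} disappears when $n=1$. The only points that need care are bookkeeping ones---that $A_{\tilde\pi}$ automatically has $|A_{\tilde\pi}|=r$ whenever it is defined, that $\Xi'_{A_{\tilde\pi}}=\Xi_{A_{\tilde\pi}}$ in the $n=1$ case, and that the two descriptions of $M(m|1,\pi)$ (as an index set for a basis of $C(\pi)$, and as $\{B\in M(m|1,r)\mid\Xi'_B\cdot v_{\tilde\pi}\neq0\}$) agree---all of which are already available in the material immediately preceding the statement. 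If anything, the subtlest step is making sure the weight argument in the $\sfc_A$-expansion is applied correctly, i.e. that $\Xi_{A_\la}\cdot v_{\tilde\pi}=0$ unless $\co(A_\la)=\tilde\pi$, which is just the action of the weight idempotent $[\diag(\tilde\pi)]$ on $v_{\tilde\pi}$.
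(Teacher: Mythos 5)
Your proposal is correct and follows the same route as the paper, which simply asserts that the corollary is immediate from Theorems \ref{ngeq1} and \ref{n=1} together with the description $M(m|1,\pi)=\{A\in M(m|1,r)\mid \Xi'_A\cdot v_\pi\neq 0\}$. Your spelled-out bookkeeping (the weight argument isolating $\la=\tilde\pi$ in the expansion of $\sfc_A$, and the identification $\Xi'_{A_{\tilde\pi}}=\Xi_{A_{\tilde\pi}}$ when $n=1$) is exactly the content the authors leave implicit.
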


We have proved the conjecture \cite[Conj.~8.9]{CHW2} for polynomial representations.

\vspace{.3cm}
\noindent
{\bf Acknowledgement.} The authors would like to thank Weiqiang Wang for the reference \cite{CHW2}. The first author also thanks him for various discussions during his visit to Charlottesville in January 2014 and for his comments on the canonical property in the $\mathfrak{gl}_{m|1}$ case.

\end{document}